\pdfoutput=1
\documentclass[english]{jnsao}
\usepackage[utf8]{inputenc}

\newcommand{\eps}{\varepsilon}
\renewcommand{\phi}{\varphi}
\newcommand{\N}{\mathbb{N}}
\newcommand{\R}{\mathbb{R}}
\newcommand{\LL}{\mathbb{L}}
\newcommand{\Rbar}{\overline{\R}}

\newcommand{\calF}{\mathcal{F}}

\newcommand{\calM}{\mathcal{M}}
\newcommand{\calO}{\mathcal{O}}

\newcommand{\calR}{\mathcal{R}}
\newcommand{\calT}{\mathcal{T}}
\newcommand{\dx}{\,\mathrm{d}x}
\newcommand{\dt}{\,\mathrm{d}t}
\newcommand{\ds}{\,\mathrm{d}s}
\newcommand{\TV}{\mathrm{TV}}
\newcommand{\dual}[1]{\langle #1 \rangle}
\newcommand{\scalprod}[1]{\left( #1 \right)}
\newcommand{\norm}[1]{\| #1 \|}
\newcommand{\set}[2]{\left\{#1:#2\right\}}
\newcommand{\wkto}{\rightharpoonup}

\DeclareMathOperator{\Id}{\mathrm{Id}}

\renewcommand{\div}{\operatorname{\mathrm{div}}}
\newcommand{\prox}{\mathrm{prox}}
\newcommand{\proj}{\mathrm{proj}}

\usepackage[centercolon]{mathtools}

\usepackage{booktabs}
\usepackage{graphicx}
\usepackage{subcaption}
\usepackage{tikz}
\usepackage{pgfplots}
\pgfplotsset{compat=newest}
\pgfplotsset{plot coordinates/math parser=false}
\pgfplotsset{every axis plot/.append style={line width=0.5pt}, tick label style={font=\small}, yticklabels={,,}, legend pos=south east}

\usepackage[capitalise,nameinlink]{cleveref}
\numberwithin{equation}{section}
\newtheorem{assumption}{Assumption}
\crefname{assumption}{Assumption}{Assumptions}
\usepackage{enumitem}
\setlist[enumerate]{label={(\roman*)}}

\def\thetitle{Optimal control of the principal coefficient in a scalar wave equation}
\shortauthor{Clason, Kunisch, Trautmann}

\manuscriptstatus{Submitted manuscript}
\manuscripteprinttype{arxiv}
\manuscripteprint{1912.08672v3}
\date{2020-11-12}
\manuscriptlicense{CC-BY}

\title{\thetitle}
\author{%
  Christian Clason\thanks{Faculty of Mathematics, University Duisburg-Essen, 45117 Essen, Germany (\email{christian.clason@uni-due.de}, \orcid{0000-0002-9948-8426})}
  \and
  Karl Kunisch\thanks{Institute of Mathematics and Scientific Computing, University of Graz, Heinrichstrasse 36, 8010 Graz, Austria, and Radon Institute, Austrian Academy of Sciences, Linz, Austria (\email{karl.kunisch@uni-graz.at})} 
  \and 
  Philip Trautmann\thanks{Institute of Mathematics and Scientific Computing, University of Graz, Heinrichstrasse 36, 8010 Graz, Austria (\email{philip.trautmann@uni-graz.at})} 
}

\AtBeginDocument{
  \hypersetup{
    pdftitle = {\thetitle},
    pdfauthor = {C.~Clason and K.~Kunisch and P.~Trautmann}
  }
}

\begin{document}

\maketitle

\begin{abstract}
  We consider optimal control of the scalar wave equation where the control enters as a coefficient in the principal part. Adding a total variation penalty allows showing existence of optimal controls, which requires continuity results for the coefficient-to-solution mapping for discontinuous coefficients. We additionally consider a so-called \emph{multi-bang} penalty that promotes controls taking on values pointwise almost everywhere from a specified discrete set.
  Under additional assumptions on the data, we derive an improved regularity result for the state, leading to optimality conditions that can be interpreted in an appropriate pointwise fashion. The numerical solution makes use of a stabilized finite element method and a nonlinear primal-dual proximal splitting algorithm.
\end{abstract}

\section{Introduction}\label{sec:introduction}

This work is concerned with an optimal control problem for the scalar
wave equation where the control enters as the spatially varying coefficient
in the principal part. Informally, we consider the problem
\begin{equation}\label{eq:problem-intro}
  \left\{
    \begin{aligned}
      &\min_{u,y}\frac12\|By-y_d\|_{\calO}^2+\calR(u)\\
      &\text{s.t.}\quad y_{tt} - \div(u\nabla y) = f,\quad y(0)=y_0, \partial_t y(0) = y_1,\\
      &\phantom{s.t.}\quad \underline u \leq u \leq \overline u \quad\text{almost everywhere (a.e.)},
    \end{aligned}
  \right.
\end{equation}
where $y_d$ is a given (desired or observed) state, $B$ is a bounded linear
observation operator mapping to the observation space $\calO$, $\cal R$ is a
regularization term, $0<\underline u < \overline u$ are constants, and $f$,
$y_0$, and $y_1$ (as well as boundary conditions) are given suitably. A precise
statement is deferred to \cref{sec:preliminaries}. Such problems
occur, e.g., in acoustic tomography for medical imaging \cite{BC:2006a} and non-destructive testing \cite{Krautkramer:1990} as well as in seismic inversion \cite{Tarantola:1984}. In the latter, the goal is the
determination of a ``velocity model'' (as described by the coefficient $u$) of
the underground in a region of interest from recordings (``seismograms'',
modeled by $y_d$) of reflected pressure waves generated by sources on or near
the surface (entering the equation via $f$, $y_0$, $y_1$, or inhomogeneous
boundary conditions). If the region contains multiple different materials like
rock, oil, and gas, the velocity model changes rapidly or may even have jumps
between material interfaces.

In the stationary case, the question of existence of solutions to problem
\eqref{eq:problem-intro} under only pointwise constraints and regularization
has received a tremendous amount of attention. However, it was answered in the
negative in \cite{Murat:1977}; this and subsequent investigations led to the
concept of $H$-convergence and, more generally, to homogenization theory;
see, e.g., \cite{Jiang2005,Murat1997,Tartar:1987,Tartar97,Tartar:2009}. The
use of regularization terms or constraints involving higher-order differential
operators would certainly guarantee existence but contradicts the
goal of allowing piecewise continuous controls $u$. Such considerations
suggest the introduction of total variation regularization in addition to
pointwise constraints. In this case, existence can be argued. However, this
leads to difficulties in deriving necessary optimality conditions since the sum
rule of convex analysis can only be applied in the $L^\infty(\Omega)$ topology,
which would lead to (generalized) derivatives that do not admit a pointwise
representation. This difficulty can be circumvented by replacing the pointwise
constraints by a (differentiable approximation of a) cutoff function applied
to the coefficient in the equation and by using improved regularity results for
the optimal state that allow extending the Fréchet derivative of the tracking
term from $L^\infty(\Omega)$ to $L^s(\Omega)$ for $s<\infty$ sufficiently
large. Together, this allows obtaining derivatives and subgradients in
$L^r(\Omega)$ for some $r>1$, which can be characterized in a suitable pointwise
manner. This was carried out in \cite{CKK:2017}, which considered for $\calR$
a combination of total variation and multi-bang regularization; the latter
is a convex pointwise penalty that promotes controls which take values from
a prescribed discrete set (e.g., corresponding to different materials such as
rock, oil, and gas); see also \cite{CK:2013,CK:2015,CD:2017}.

In the current work, we extend this approach to optimal control and
identification of discontinuous coefficients in scalar wave equations by
deriving under additional (natural) assumptions on the data the adapted higher
regularity results for the wave equation based on elliptic maximal regularity
theory \cite{Groeger}; see \cref{ass:data,prop:state_regularity} below. We also
address a suitable discretization of the problem using a stabilized finite element method \cite{Zlotnik1994} and its solution by a nonlinear primal-dual proximal
splitting method \cite{Valkonen:2014,CV:2017,CMV:2019}.

Let us briefly comment on related literature. As there is a vast body of work
on control and inverse problems for the wave equation, we focus here
specifically on the identification of discontinuous (and, in particular,
piecewise constant) coefficients. This problem has attracted strong
interest over the last few decades, mainly due to its relevance in seismic
inversion. Classical works are mainly concerned with the one-dimensional
setting -- as a model for seismic inversion in stratified or layered media --
which allows making use of integral transforms to derive explicit
``layer-stripping'' formulas; see, e.g.,
\cite{Bube:1985,Lavrentev:1992,Aktosun:1996,Sedipkov:2014}. Regarding the
numerous works on wave speed identification in the multidimensional wave
equation for seismic inversion, we only mention exemplarily
\cite{Stolk00,Boehm,Goncharsky2019}; see also further literature cited there.
The use of total variation penalties for recovering a piecewise constant wave
speed in multiple dimensions has been proposed in, e.g., \cite{Akcelik:2008,
Burstedde09, Esser18, Yong:2018, Gao:2019}, although the earlier works employed
a smooth approximation of the total variation to allow the numerical solution
by standard approaches for nonlinear PDE-constrained optimization. Finally,
joint multi-bang and total variation regularization of linear inverse problems
and its numerical solution by a primal-dual proximal splitting methods were
considered in \cite{TramDo}. We also mention that multi-bang control is related to (but different from) switching controls, where at each instant in time, one and only one from a given set of time-dependent controls should be active; see, e.g., \cite{Hante:2013}.

This work is organized as follows. In the next \cref{sec:preliminaries}, we
give a formal statement of the optimal control problem \eqref{eq:problem-intro}
and recall the relevant definitions and properties of the functional. We then
derive in \cref{sec:state} the results on regularity, stability, and a priori estimates
for solutions of the state equation that will be needed in the rest of the
paper. In particular, in \cref{prop:state_regularity} we show a Groeger-type
maximal regularity result for the wave equation under additional assumptions on
the data. \Cref{sec:existence} is devoted to existence and first-order
necessary optimality conditions for optimal controls, where
we use the mentioned maximal regularity result to show that the latter can be
interpreted in a pointwise fashion. We then discuss the numerical computation
of solutions using a stabilized finite element discretization (see
\cref{sec:discretization}) together with a nonlinear primal-dual proximal
splitting method (see \cref{sec:pdps}). This approach is illustrated in
\cref{sec:examples} for two examples: a transmission setup motivated by
acoustic tomography and a reflection setup modeling seismic tomography.

\section{Problem statement}\label{sec:preliminaries}

Let $\Omega \subset \R^d$, $d \in \{2,3\}$, be a bounded domain
with $C^{2,1}$ regular boundary $\partial \Omega$ and outer normal $\nu$. For brevity, we introduce the notation $H:=L^2(\Omega)$ and $V:=H^1(\Omega)$ and set $I:=(0,T)$.
Then we consider for $f\in L^2(I,V)$, $y_0\in V$, and $y_1\in H$
the weak solution $y\in C(\overline
I,V)\cap C^1(\overline I,H)$ to
\begin{equation} \label{eq:wave-equation}
  \left\{\begin{aligned}
      \partial_{tt} y - \div(u\nabla y) &= f && \text{ in } Q
      := (0,T) \times \Omega, \\
      {\partial_\nu y} &= 0 && \text{ on } \Sigma := (0,T)
      \times \partial \Omega, \\
      y(0) &= y_0,\quad {\partial_t} y(0) = y_1, && \text{ on
      }\Omega.
  \end{aligned}\right.
\end{equation}
This choice of Neumann boundary conditions corresponds, e.g., for acoustic waves to the situation of reflection at a sound-hard obstacle and for elastic waves to the absence of external forces at the boundary (which is a natural setting for seismic imaging via interior sources).
We will discuss existence and regularity of solutions to \eqref{eq:wave-equation} in the following \cref{sec:state}.

The salient point is of course the coefficient $u$ in the principal part,
which we want to control on an open subset $\omega_c\subseteq \Omega$, which is assumed
to have a $C^{2,1}$ regular boundary.
For constants $\underline{u}, \overline{u}$ with $0 < \underline{u} <
\overline{u} < \infty$ we define the set of admissible coefficients
\begin{equation}\label{eq:def-hat-U}
  \hat U = \set{u \in L^\infty(\Omega)}{\underline{u} \le u(x) \le
  \overline{u}\quad\text{for a.e.}~x\in \Omega}
\end{equation}
and pick a reference coefficient $\hat u\in \hat U$.
To map a control $u$ defined on $\omega_c$ to a coefficient defined on
$\Omega$, we introduce the affine bounded extension operator
\begin{equation}\label{eq:extension}
  \hat E: L^2(\omega_c)\to L^2(\Omega),\qquad [\hat Eu](x) :=
  \begin{cases}
    \hat u(x) + u(x) & \text{for }x\in \omega_c,\\
    \hat u(x) & \text{for }x\in \Omega\setminus\omega_c.
  \end{cases}
\end{equation}
The set of controls that can be extended to admissible coefficients is
then given by
\begin{equation}\label{eq:def-U}
  U = \set{u \in L^\infty(\omega_c)}{u_{\min} \le u(x) \le u_{\max}
  \quad\text{for a.e.}~x\in \omega_c},
\end{equation}
where $u_{\min}<u_{\max}$ are such that $\underline{u} \leq \inf_{x\in \omega_c} \hat u(x) + u_{\min} \leq \sup_{x\in \omega_c} \hat u(x) + u_{\max}
\leq \overline{u}$. In particular, for $\hat u \equiv \underline u$, we have $u_{\min}=0$ and $u_{\max} = \overline{u}-\underline{u}$.

Moreover, we introduce the observation space $\calO$ which is assumed to be a
separable Hilbert space as well as a linear and bounded observation operator
$B\in \LL(L^2(Q),\calO)$ with adjoint $B^\ast\in \LL(\calO, L^2(Q))$.

We then consider the optimal control problem
\begin{equation}\label{eq:problem-full}
  \min_{u\in BV(\omega_c)\cap U}\frac12\|By(\hat
  Eu)-y_d\|_{\calO}^2+\alpha G(u)+\beta \TV(u),
\end{equation}
where $y(u)$ is a weak solution to \eqref{eq:wave-equation}, $G$
is the multi-bang penalty from \cite{CK:2013,CK:2015}, $\TV$ denotes
the total variation, and
$\alpha$ and $\beta$ are positive constants. In the remainder of
this section, we recall the definitions and properties of
the total variation and the multi-bang penalty relevant to the current work.

\paragraph{Total variation}

We recall, e.g., from
\cite{Ambrosio,Giusti,Ziemer} that the space $BV(\omega_c)$ is given by
those functions $v\in L^1(\omega_c)$ for which the distributional
derivative $Dv$ is a Radon measure, i.e.,
\begin{equation*}
  BV(\omega_c) = \set{v\in L^1(\omega_c)}{
  \norm{Dv}_{\calM(\omega_c)} < \infty}.
\end{equation*}
The \emph{total variation} of a function $v\in BV(\omega_c)$ is then
given by
\begin{equation*}
  \TV(v) := \norm{Dv}_{\calM(\omega_c)} = \int_{\omega_c}
  \mathrm{d}|Dv|_2,
\end{equation*}
i.e., the total variation (in the sense of measure theory) of the vector
measure $Dv\in\calM(\omega_c;\R^d)=C_0(\omega_c;\R^d)^*$. Here,
$|\cdot|_2$ denotes the Euclidean norm on $\R^d$; we thus consider in this work
the \emph{isotropic} total variation. For $v\in L^1(\omega_c)\setminus
BV(\omega_c)$, we set $\TV(v)=\infty$.
It follows that $BV(\omega_c)$ embeds into $L^r(\omega_c)$ continuously
for every $r\in [1,\frac{d}{d-1}]$ and compactly if $r < \frac{d}{d-1}$;
see, e.g., \cite[Cor.~3.49 together with Prop.~3.21]{Ambrosio}.
In addition, the total variation is lower semi-continuous with respect
to strong convergence in $L^1(\omega_c)$, i.e., if
$\{u_n\}_{n\in\N}\subset BV(\omega_c)$ and $u_n\to u$ in
$L^1(\omega_c)$, we have that
\begin{equation}\label{eq:tv_lsc}
  \TV(u)\leq \liminf_{n\to\infty} \TV(u_n),
\end{equation}
see, e.g., \cite[Thm.~5.2.1]{Ziemer}. Note that this does not imply that
$\TV(u)<\infty$ and hence that $u\in BV(\omega_c)$ unless
$\{\TV(u_n)\}_{n\in\N}$ has a bounded subsequence.
From \eqref{eq:tv_lsc}, we also deduce that the convex extended
real-valued functional $\TV:L^p(\omega_c)\rightarrow\R\cup\{\infty\}$ is
weakly lower semi-continuous for any $p\in [1,\infty]$.

\paragraph{Multi-bang penalty}

Let $u_{\min} \leq u_1<\dots < u_m\leq u_{\max}$ be a given set of desired
coefficient values.
The \emph{multi-bang penalty} $G$ is then defined similar to \cite{CK:2015},
where we have to replace the box constraints $u(x)\in [u_1,u_m]$ by a
linear growth to ensure that $G$ is finite on $L^r(\omega_c)$,
$r<\infty$. For simplicity, we assume in the following that $u_1 = u_{\min} = 0$
and $u_m = u_{\max} = \overline u - \underline u$ (i.e., $\hat u = \underline u$) and define
\begin{equation*}
  G:L^1(\omega_c)\to\R,\qquad G(u) = \int_{\omega_c}
  g(u(x))\,\mathrm{d}x,
\end{equation*}
where $g:\R\to \R$ is given by
\begin{equation}\label{eq:multi-bang_pw}
  g(t) = \begin{cases}
    -u_m t & t \leq u_1,\\
    \frac12 \left((u_{i}+u_{i+1})t - u_iu_{i+1}\right) & t \in
    [u_i,u_{i+1}],\quad 1\leq i <m,\\
    u_m t - \frac12 u_m^2 & t \geq u_m.
  \end{cases}
\end{equation}
This definition can be motivated via the convex envelope of $\delta_{[u_1,u_d]}(t) + \frac12 |t|^2$ (where $\delta$ denotes the indicator function in the sense of convex analysis), see \cite{CK:2015}; note however that here (as in \cite{CKK:2017}) $g$ is defined to be finite for every $t\in\R$, while the convex envelope is only finite for $t\in[u_1,u_2]$.
We also remark that for $m=2$, this reduces in the current setting to the
well-known \emph{sparsity penalty} (i.e., $G(u) = \norm{u}_{L^1(\omega_c)}$ for any $u\in U$).

It can be verified easily that $g$ is continuous, convex, and linearly
bounded from above and below, i.e.,
\begin{equation*}
  \frac12 u_2 |t| \leq g(t) \leq u_m |t| \qquad\text{for all }t\in \R.
\end{equation*}
Since $g$ is finite (and hence proper), convex, and continuous, the
corresponding integral operator $G:L^r(\omega_c)\to \R$ is finite,
convex, and continuous (and hence \emph{a fortiori} weakly lower
semi-continuous) for any $r\in [1,\infty]$; see, e.g.,
\cite[Prop.~2.53]{Barbu}.
Also, the properties of $g$ imply that
\begin{enumerate}[label={(\textsc{g}\arabic*)}]
  \item $G(v) > G(0)=0$ for all $v\in L^1(\omega_c)\setminus\{0\}$,
    \label{ass:g:bound}
  \item $\frac12 u_2 \norm{v}_{L^1(\Omega)}\leq G(v) \leq u_m
    \norm{v}_{L^1(\omega_c)}$ for all $v\in L^1(\omega_c)$.\label{ass:g:equiv}
\end{enumerate}

\section{The state equation}\label{sec:state}

We first consider the state equation for a fixed coefficient $u\in \hat U$ (i.e., defined and uniformly bounded on the full domain $\Omega$ and satisfying $\underline{u}\leq u \leq \overline{u}$ almost everywhere).
For given $u\in \hat U$, $f\in L^2(I,H)$, $y_0\in V$, and $y_1\in H$, we call $y=y(u)$ a (weak) solution to \eqref{eq:wave-equation} if
$y\in W:= L^2(I,V) \cap W^{1,2}(I,H)$ and
\begin{equation}\label{eq:weak}
  \left\{
    \begin{aligned}
      \int_0^T - (\partial_t y, \partial_t v)_H + (u \nabla y(t), \nabla v(t))_{\LL^2(\Omega)} dt &= \int_0^T (f(t), v(t))_H dt + (y_1,v(0))_H,\\
      y(0) &= y_0,
    \end{aligned}
  \right.
\end{equation}
for all $v\in W$ with $v(T) = 0$.
We then have the following existence and natural regularity result.
\begin{lemma}\label{lem:state_existence}
  For every $u \in \hat U$ and $(f, y_0, y_1) \in L^2(I,H) \times V \times H$, there exists a unique (weak) solution $y = y(u) \in Z := C(\overline V) \cap C^1(\overline H)$ to \eqref{eq:wave-equation} satisfying
  \begin{equation}\label{eq:apriori}
    \norm{y}_{C(\overline V)} + \norm{\partial_{t} y}_{C(\overline H)} + \norm{\partial_{tt} y}_{L^2({I}, V^*)} \le C_1(\norm{f}_{L^2(I,H)} + \norm{y_0}_V + \norm{y_1}_H)
  \end{equation}
  for a constant $C_1$ independent of $(f,y_0, y_1) \in L^2(I,H) \times V \times H$ and $u \in \hat U$.
\end{lemma}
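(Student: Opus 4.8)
The plan is to establish this standard existence and regularity result for the linear wave equation with a bounded, coercive coefficient via the Galerkin method, which I would structure in the usual three phases: construct approximate solutions, derive uniform a priori estimates, and pass to the limit. The key structural fact that makes everything work is that for $u \in \hat U$ the bilinear form $a_u(y,v) := (u\nabla y, \nabla v)_{\LL^2(\Omega)}$ satisfies $\underline u \norm{\nabla y}^2 \le a_u(y,y)$ and $a_u(y,v) \le \overline u \norm{\nabla y}\norm{\nabla v}$, so it is bounded and coercive on $V$ uniformly in $u \in \hat U$; this uniformity is precisely what will yield a constant $C_1$ independent of $u$.

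First I would fix a Galerkin basis $\{w_k\}_{k\in\N}$ of $V$ (e.g.\ the Neumann-Laplacian eigenfunctions, which form an orthogonal basis of both $H$ and $V$), seek approximate solutions $y_n(t) = \sum_{k=1}^n c_k^n(t) w_k$ solving the projected system, and solve the resulting linear second-order ODE system for $(c_k^n)$ with initial data given by the $V$-projections of $y_0$ and $H$-projections of $y_1$. Second, and this is the technical heart, I would derive the energy estimate: testing the projected equation with $\partial_t y_n$ and integrating in time gives, after using coercivity,
\begin{equation*}
  \tfrac12\norm{\partial_t y_n(t)}_H^2 + \tfrac{\underline u}{2}\norm{\nabla y_n(t)}_{\LL^2}^2 \le C\left(\norm{y_1}_H^2 + \norm{y_0}_V^2\right) + \int_0^t (f(s),\partial_t y_n(s))_H\,\mathrm{d}s.
\end{equation*}
Controlling the last term by Cauchy--Schwarz and absorbing via Grönwall's inequality (the $\norm{y_n}_H$ part of the $V$-norm is recovered from $\norm{\partial_t y_n}_H$ by integrating in time) yields a bound on $\norm{y_n}_{C(\overline V)} + \norm{\partial_t y_n}_{C(\overline H)}$ by the right-hand side of \eqref{eq:apriori}, with a constant depending only on $T$, $\underline u$, and $\overline u$. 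The bound on $\norm{\partial_{tt} y_n}_{L^2(I,V^*)}$ then follows by reading $\partial_{tt} y_n = f + \div(u\nabla y_n)$ off the equation and estimating the duality pairing against test functions in $V$, using the already-established $L^2(I,V)$ bound on $y_n$.

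Third I would extract weakly-$*$ convergent subsequences in $L^\infty(I,V)$ and $W^{1,\infty}(I,H)$ (plus weak convergence of $\partial_{tt} y_n$ in $L^2(I,V^*)$), pass to the limit in the projected weak form against finitely many basis functions, and use density to recover the weak formulation \eqref{eq:weak} for all admissible test functions; the continuity in time, i.e.\ membership in $Z = C(\overline V)\cap C^1(\overline H)$ rather than merely $L^\infty$, follows from a standard argument (e.g.\ \cite[Ch.~3]{LionsMagenes}-type reasoning or the Lions--Magenes lemma on interpolation spaces). \emph{The main obstacle} I anticipate is not the a priori estimate itself but attaining the sharp continuity $y\in C(\overline V)$ and $\partial_t y \in C(\overline H)$: the energy method only directly gives $L^\infty$-in-time bounds and weak continuity, and upgrading to strong continuity requires either a regularization/smoothing argument or invoking the classical result that a function in $L^\infty(I,V)$ which is weakly continuous into $V$ and whose time derivative lies in the right space is, after modification on a null set, strongly continuous. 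Uniqueness I would obtain at the end by the usual energy argument applied to the difference of two solutions with vanishing data, again exploiting coercivity of $a_u$; since this is entirely standard for the linear wave equation, I would cite a textbook reference rather than reproduce it in full.
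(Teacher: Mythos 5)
Your proposal is correct in substance, but it takes a different route in presentation: the paper disposes of everything except the $\partial_{tt}y$ estimate by citing \cite[Thm.~3.8.2]{LiMa72}, merely observing that for $u\in\hat U$ the form $(u\nabla y,\nabla v)_{\LL^2(\Omega)}$ is bounded and coercive with respect to the $V$-\emph{seminorm} uniformly in $u$ (which is what makes $C_1$ independent of $u$), and then obtains $\norm{\partial_{tt}y}_{L^2(I,V^*)}$ by reading $\partial_{tt}y = f + \div(u\nabla y)$ off the equation and using the already-established $L^2(I,V)$ bound. You instead unpack the citation and reconstruct the Faedo--Galerkin proof that underlies it. The two are mathematically the same argument at different levels of granularity: your uniform coercivity observation, your recovery of the $\norm{y_n}_H$ part of the $V$-norm by integrating $\partial_t y_n$ in time (necessary precisely because the form is only seminorm-coercive under Neumann conditions), and your treatment of the second time derivative all match what the paper either states or implicitly relies on. What your version buys is self-containedness and an explicit display of where the constant's dependence on $T$, $\underline u$, $\overline u$ comes from; what the paper's version buys is brevity. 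One small imprecision worth flagging: the upgrade from $L^\infty(I,V)\cap W^{1,\infty}(I,H)$ to $C(\overline I,V)\cap C^1(\overline I,H)$ does not follow from weak continuity plus membership of the derivative in the right space alone, as your phrasing suggests; the standard argument (Lions--Magenes, Ch.~3, \S 8.4) combines weak continuity with continuity of the energy $t\mapsto \norm{\partial_t y(t)}_H^2 + (u\nabla y(t),\nabla y(t))_{\LL^2(\Omega)}$ and a Radon--Riesz-type step. Since you defer to the textbook for exactly this point, it is not a gap, but the statement as you quote it is not a correct theorem on its own.
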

\begin{proof}
  Except for the estimate on $\norm{\partial_{tt} y}_{L^2({I}, V^*)}$, the claim follows from \cite[Theorem 3.8.2, page 275]{LiMa72}, where we observe that due to our assumption on $u\in \hat U$, the energy is coercive with respect to the seminorm in $V$; see also \cite[Theorem 2.4.5]{Stolk00}. The constant $C_1$ depends on $\underline{u}$ and $\overline u$, but is otherwise independent of $u\in \hat U$.

  To verify the missing estimate, we use from \eqref{eq:apriori} that
  \begin{equation*}
    \norm{y}_{L^2({I}, V)}\le C_1(\norm{f}_{L^2(I,H)} + \norm{y_0}_V + \norm{y_1}_H).
  \end{equation*}
  Since $u \in \hat U$, we deduce that
  \begin{equation*}
    \norm{\div(u\nabla y)}_{L^2({I}, V^*)}\le C_1(\norm{f}_{L^2(I,H)} + \norm{y_0}_V + \norm{y_1}_H).
  \end{equation*}
  We further deduce from the state equation that
  \begin{equation*}
    \norm{\partial_{tt} y}_{L^2(I,V^*)} \le \tilde C_1(\norm{f}_{L^2(I,H)} + \norm{y_0}_V + \norm{y_1}_H),
  \end{equation*}
  with $\tilde C_1 = C_1+1$.
\end{proof}

By the change of variables $t\mapsto T-t$, we can also apply \cref{lem:state_existence} to the dual problem
\begin{equation}\label{eq:dual}
  \left\{
    \begin{aligned}
      \int_0^T - (\partial_t \phi, \partial_t v)_H + (u \nabla \phi(t), \nabla v(t))_{\LL^2(\Omega)} dt &= \int_0^T (g(t), v(t))_H dt + (\phi_1(T),v(T))_H,\\
      \phi(T) &= \phi_0 \text{ in } V,
    \end{aligned}
  \right.
\end{equation}
for any $g\in L^2(I,H)$, $\phi_0\in V$, $\phi_1 \in H$, and any $v\in W$ with $v(0)=0$.
\begin{corollary}\label{cor:dual_existence}
  For every $u\in \hat U$ and $g\in L^2(I,H)$, $\phi_0\in V$, and $\phi_1 \in H$, there exists a unique solution $\phi \in Z$ to \eqref{eq:dual} satisfying
  \begin{equation*}
    \norm{\phi}_{C(\overline I,V)} + \norm{\partial_t \phi}_{C(\overline I,H)}+\norm{\partial_{tt} \phi}_{L^2({I}, V^*)} \leq C_1 (\norm{g}_{L^2(I,H)} + \norm{\phi_0}_V + \norm{\phi_1}_{H}).
  \end{equation*}
\end{corollary}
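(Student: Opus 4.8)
The plan is to reduce \eqref{eq:dual} to the forward problem \eqref{eq:weak} via the time-reversal $t \mapsto T - t$ announced before the statement, and then simply invoke \cref{lem:state_existence}. Concretely, given data $(g,\phi_0,\phi_1) \in L^2(I,H) \times V \times H$ for \eqref{eq:dual}, I would set $\tilde f(t) := g(T-t)$, $\tilde y_0 := \phi_0$, and $\tilde y_1 := \phi_1$, so that $(\tilde f,\tilde y_0,\tilde y_1) \in L^2(I,H)\times V\times H$, let $\tilde\phi \in Z$ be the unique forward solution of \eqref{eq:weak} with this data furnished by \cref{lem:state_existence}, and then define $\phi(t):=\tilde\phi(T-t)$, claiming that $\phi$ solves \eqref{eq:dual}.

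The heart of the argument is verifying that $\phi$ satisfies the dual weak form. The crucial structural point is that under the reflection $R\colon w\mapsto w(T-\cdot)$ the second time derivative is even while the first is odd, i.e. $\partial_t(Rw) = -R(\partial_t w)$, so that the bilinear quantity $(\partial_t\phi,\partial_t v)_H$ is invariant when \emph{both} arguments are reflected. I would take any admissible test function $v$ for \eqref{eq:dual} (so $v\in W$ with $v(0)=0$), set $\tilde v:=Rv$, and note that $\tilde v\in W$ with $\tilde v(T)=0$, hence $\tilde v$ is admissible for \eqref{eq:weak}. Substituting $\phi=R\tilde\phi$ and $v=R\tilde v$ into \eqref{eq:dual} and changing variables $s=T-t$ turns $-(\partial_t\phi,\partial_t v)_H$ into $-(\partial_s\tilde\phi,\partial_s\tilde v)_H$ (the two sign flips cancel) and leaves the principal-part term $(u\nabla\phi,\nabla v)_{\LL^2(\Omega)}$ unchanged; on the right-hand side the source becomes $(\tilde f,\tilde v)_H$ and the boundary term $(\phi_1(T),v(T))_H$ becomes $(\tilde y_1,\tilde v(0))_H$. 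Since $\tilde\phi$ satisfies \eqref{eq:weak} for every such $\tilde v$, this is precisely the identity defining a solution of \eqref{eq:dual}, and the terminal condition $\phi(T)=\phi_0$ follows from $\tilde\phi(0)=\tilde y_0=\phi_0$.

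It then remains to transfer the a priori bound and uniqueness. Here I would use that $R$ is an isometric isomorphism of $C(\overline I,X)$ and of $L^2(I,X)$ for any Banach space $X$ and intertwines time differentiation up to sign, whence $\norm{\phi}_{C(\overline I,V)}=\norm{\tilde\phi}_{C(\overline I,V)}$, $\norm{\partial_t\phi}_{C(\overline I,H)}=\norm{\partial_t\tilde\phi}_{C(\overline I,H)}$, and $\norm{\partial_{tt}\phi}_{L^2(I,V^*)}=\norm{\partial_{tt}\tilde\phi}_{L^2(I,V^*)}$, while $\norm{\tilde f}_{L^2(I,H)}=\norm{g}_{L^2(I,H)}$, $\norm{\tilde y_0}_V=\norm{\phi_0}_V$, and $\norm{\tilde y_1}_H=\norm{\phi_1}_H$. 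Applying \eqref{eq:apriori} to $\tilde\phi$ then yields the stated estimate with the same constant $C_1$. Uniqueness for \eqref{eq:dual} follows from uniqueness for \eqref{eq:weak}, since $R$ is a bijection between the respective solution sets.

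I expect the only delicate point to be the bookkeeping of signs and of the initial/terminal data under reflection -- in particular confirming that the terminal-velocity datum encoded by the boundary term $(\phi_1(T),v(T))_H$ in \eqref{eq:dual} maps to exactly the initial-velocity datum $\tilde y_1$ in \eqref{eq:weak}; everything else is a direct change of variables.
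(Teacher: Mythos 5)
Your proposal is correct and coincides with the paper's argument: the paper likewise obtains \cref{cor:dual_existence} by the change of variables $t\mapsto T-t$, reducing \eqref{eq:dual} to the forward problem and invoking \cref{lem:state_existence}, merely leaving implicit the sign bookkeeping and the transfer of norms that you spell out. No gaps.
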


Using this result, we can apply an Aubin--Nitsche trick or duality argument to show Lipschitz continuity of $u\mapsto y(u) \in L^2(I,H)$, which we will need to show differentiability of the tracking term later.
\begin{lemma}\label{lem:state_lipschitz}
  There exists a constant $L>0$ such that the mapping $u\mapsto y(u)$ satisfies
  \begin{equation*}
    \norm{y(u_1)-y(u_2)}_{L^2(I,H)}\leq L \norm{u_1-u_2}_{L^\infty(\Omega)} \quad \text{for all }u_1,u_2\in \hat U.
  \end{equation*}
\end{lemma}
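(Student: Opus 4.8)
The plan is to estimate $\|y(u_1)-y(u_2)\|_{L^2(I,H)}$ by testing the difference of the two state equations against a suitable solution of the dual problem \eqref{eq:dual}. Write $y_i = y(u_i)$ and set $w := y_1 - y_2$. Subtracting the weak formulations \eqref{eq:weak} for $u_1$ and $u_2$, and noting that the data $(f,y_0,y_1)$ and initial conditions cancel, $w$ satisfies
\begin{equation*}
  \int_0^T -(\partial_t w, \partial_t v)_H + (u_1 \nabla w, \nabla v)_{\LL^2(\Omega)} \dt = -\int_0^T ((u_1-u_2)\nabla y_2, \nabla v)_{\LL^2(\Omega)} \dt
\end{equation*}
for all admissible test functions $v\in W$ with $v(T)=0$, together with $w(0)=0$ and $\partial_t w(0)=0$. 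Thus $w$ solves the state equation with coefficient $u_1$, zero initial data, and a right-hand side driven by the source term $(u_1-u_2)\nabla y_2$.

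**Choosing the dual state.**

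To extract the $L^2(I,H)$-norm of $w$, I would fix the dual right-hand side to be $g := w$ and solve \eqref{eq:dual} with coefficient $u_1$, $\phi_0 = 0$, and $\phi_1 = 0$; call the solution $\phi$. By \cref{cor:dual_existence} this $\phi$ exists in $Z$ and satisfies $\|\phi\|_{C(\overline I,V)} \le C_1\|w\|_{L^2(I,H)}$. The key identity is obtained by using $w$ (extended appropriately so that it is admissible, i.e. $w(0)=0$) as a test function in the dual equation and $\phi$ as a test function in the equation for $w$; the symmetric bilinear form $\int_0^T (u_1\nabla\,\cdot\,,\nabla\,\cdot\,)$ and the second-order time term match up after integration by parts in time, and the boundary terms vanish because $w(0)=0$, $\partial_t w(0)=0$, $\phi(T)=0$, $\partial_t\phi(T)=0$. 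This yields
\begin{equation*}
  \|w\|_{L^2(I,H)}^2 = \int_0^T (w, g)_H \dt = -\int_0^T ((u_1-u_2)\nabla y_2, \nabla \phi)_{\LL^2(\Omega)} \dt.
\end{equation*}

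**Closing the estimate.**

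From here the bound is routine: estimate the right-hand side by
\begin{equation*}
  \left| \int_0^T ((u_1-u_2)\nabla y_2, \nabla \phi)_{\LL^2(\Omega)} \dt \right| \le \|u_1-u_2\|_{L^\infty(\Omega)} \|\nabla y_2\|_{L^2(I,\LL^2(\Omega))} \|\nabla \phi\|_{L^2(I,\LL^2(\Omega))}.
\end{equation*}
Then $\|\nabla y_2\|_{L^2(I,\LL^2(\Omega))} \le \|y_2\|_{L^2(I,V)}$ is controlled by the a priori estimate \eqref{eq:apriori} in terms of the fixed data $(f,y_0,y_1)$, and $\|\nabla\phi\|_{L^2(I,\LL^2(\Omega))} \le \|\phi\|_{C(\overline I,V)} \le C_1\|w\|_{L^2(I,H)}$ by \cref{cor:dual_existence}. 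Dividing through by $\|w\|_{L^2(I,H)}$ gives the claimed Lipschitz estimate with $L := C_1^2 \,(\norm{f}_{L^2(I,H)} + \norm{y_0}_V + \norm{y_1}_H)$, absorbing the fixed data norms into the constant.

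**Main obstacle.**

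The routine parts (Cauchy–Schwarz, the a priori bounds) are immediate; the delicate step is justifying the integration-by-parts identity rigorously. The test functions $w$ and $\phi$ lie only in $W$ with second time derivatives in $L^2(I,V^*)$, so the time-integration-by-parts must be interpreted through the duality pairing $\langle \partial_{tt}(\cdot), \cdot\rangle_{V^*,V}$ rather than classically, and the boundary terms at $t=0$ and $t=T$ must be shown to vanish using the regularity $y\in Z = C(\overline I,V)\cap C^1(\overline I,H)$. A clean way to handle this is to first establish the identity for smooth data by a density argument and then pass to the limit using the continuous dependence from \cref{lem:state_existence,cor:dual_existence}; I expect this transposition/density argument to be where the real care is needed.
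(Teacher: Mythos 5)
Your proposal is correct and follows essentially the same Aubin--Nitsche duality argument as the paper: subtract the two weak formulations, test the resulting equation for the difference against a dual state with coefficient $u_1$ and zero terminal data, and use the symmetry of the bilinear forms together with the a priori bounds of \cref{lem:state_existence,cor:dual_existence}. The only (immaterial) difference is that you specialize the dual right-hand side to $g = y(u_1)-y(u_2)$ and divide by its norm, whereas the paper keeps $g\in L^2(I,H)$ arbitrary and takes a supremum at the end.
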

\begin{proof}
  Let $u_1,u_2\in \hat U$ be arbitrary and set $\delta u := u_1-u_2$ and $\delta y:= y(u_1)-y(u_2)$. Subtracting the weak equations for $y(u_1)$ and $y(u_2)$, we have that $\delta y \in Z$ satisfies $\delta y(0) = 0$ and
  \begin{equation}\label{eq:difference}
    \int_0^T - (\partial_t \delta y, \partial_t v)_H + (u_1\nabla \delta y,\nabla \delta v)_{\LL^2(\Omega)} dt = -\int_0^T (\delta u\nabla y(u_2), \nabla v)_{\LL^2(\Omega)} dt
  \end{equation}
  for all $v\in W$ with $v(T) = 0$.

  Let now $g\in L^2(I,H)$ be arbitrary and consider the corresponding solution $\phi_g\in W$ of \eqref{eq:dual} for $u=u_1$, $g_0=0$, and $g_1=0$. Noting that $v=\phi_g$ is a valid test function for \eqref{eq:difference} and $w=\delta y$ is a valid test function for \eqref{eq:dual}, we obtain that
  \begin{equation*}
    (\delta y, g)_{L^2(I,H)}= -(\partial_t \delta y,\partial_t \phi_g)_{L^2(I,H)} + (u_1\nabla \delta y,\nabla \delta \phi_g)_{L^2(I,\LL^2(\Omega))} = (\delta u \nabla y(u_2),\nabla \phi_g)_{L^2(I,\LL^2(\Omega))}.
  \end{equation*}
  Using that $\delta u\in L^\infty(\Omega)$ together with \cref{lem:state_existence,cor:dual_existence}, this implies that
  \begin{equation*}
    \begin{aligned}
      (\delta y, g)_{L^2(I,H)} &\leq \norm{\delta u}_{L^\infty(\Omega)} \norm{y(u_2)}_{L^2(I,V)}\norm{\phi_g}_{L^2(I,V)}\\
      &\leq \norm{\delta u}_{L^\infty(\Omega)}
      C_1(\norm{f}_{L^2(I,H)}+\norm{y_0}_V+\norm{y_1}_H)\norm{g}_{L^2(I,H)}
    \end{aligned}
  \end{equation*}
  for all $g\in L^2(I,H)$. Since $L^2(I,H)$ is a Hilbert spaces, taking the supremum over all $g\in L^2(I,H)$ yields the claim.
\end{proof}

In stronger norms, we only have the following weak continuity result, which will be used repeatedly.
\begin{lemma}\label{lem:state_continuity}
  Let $\{u_n\}_{n\in\N}\subset \hat U$ be a sequence with $u_n\to u$ in $L^r(\Omega)$ for some $r\in [1,\infty)$. Then $u\in \hat U$ and $y(u_n)\wkto y(u)$ in $L^2(I,V)\cap W^{1,2}(I,H)\cap W^{2,2}(I,V^*)$. Furthermore, $y(u_n)\wkto y(u)$ in $H$ pointwise for all $t\in [0,T]$.
\end{lemma}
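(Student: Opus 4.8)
The plan is to deduce admissibility of the limit coefficient, extract weak limits from the uniform a priori bound of \cref{lem:state_existence}, identify these limits by passing to the limit in the weak formulation \eqref{eq:weak}, and finally upgrade to pointwise weak convergence in $H$. First I would pass to a (not relabeled) subsequence along which $u_n\to u$ almost everywhere; since $\underline u\le u_n\le \overline u$ a.e., these bounds survive in the limit, so $u\in\hat U$. Moreover, as the $u_n$ are uniformly bounded in $L^\infty(\Omega)$, interpolating this bound with the $L^r(\Omega)$ convergence shows $u_n\to u$ in \emph{every} $L^p(\Omega)$, $p<\infty$. The a priori estimate \eqref{eq:apriori}, whose constant $C_1$ is independent of $u\in\hat U$, then bounds $\{y(u_n)\}$ uniformly in $L^2(I,V)\cap W^{1,2}(I,H)\cap W^{2,2}(I,V^*)$, and by reflexivity I extract a further subsequence with $y(u_n)\wkto \tilde y$ in this space (the three weak limits agreeing as distributions).

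Next I would pass to the limit in \eqref{eq:weak}. The linear terms are immediate: $\int_0^T(\partial_t y(u_n),\partial_t v)_H\dt \to \int_0^T(\partial_t\tilde y,\partial_t v)_H\dt$ by weak convergence of $\partial_t y(u_n)$ in $L^2(I,H)$ tested against the fixed $\partial_t v$, while the right-hand side and the initial value $y_0$ are fixed. The only delicate term is the principal part, which I split as
\begin{equation*}
  \int_0^T (u_n\nabla y(u_n)-u\nabla \tilde y,\nabla v)_{\LL^2(\Omega)}\dt
  = \int_0^T (u(\nabla y(u_n)-\nabla\tilde y),\nabla v)_{\LL^2(\Omega)}\dt
  + \int_0^T (\nabla y(u_n),(u_n-u)\nabla v)_{\LL^2(\Omega)}\dt.
\end{equation*}
The first integral vanishes since $\nabla y(u_n)\wkto\nabla\tilde y$ in $L^2(Q)$ and $u\nabla v\in L^2(Q)$ is a fixed test function. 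For the second, $(u_n-u)\nabla v\to 0$ in $L^2(Q)$ by dominated convergence (using $u_n\to u$ a.e. and $|u_n-u|\le\overline u-\underline u$), so it vanishes because $\nabla y(u_n)$ is bounded in $L^2(Q)$. Hence $\tilde y$ solves the weak equation for the coefficient $u$; by uniqueness in \cref{lem:state_existence}, $\tilde y=y(u)$. Since this limit is independent of the extracted subsequence, a standard subsequence–subsequence argument yields $y(u_n)\wkto y(u)$ for the whole sequence.

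Finally, for the pointwise weak convergence I would exploit that all $y(u_n)$ share the initial value $y_0$. For fixed $t\in[0,T]$ and $h\in H$, the fundamental theorem of calculus gives
\begin{equation*}
  (y(u_n)(t),h)_H = (y_0,h)_H + (\partial_t y(u_n),\chi_{(0,t)}h)_{L^2(I,H)},
\end{equation*}
and passing to the limit via $\partial_t y(u_n)\wkto\partial_t y(u)$ in $L^2(I,H)$ against the fixed $\chi_{(0,t)}h\in L^2(I,H)$ shows $(y(u_n)(t),h)_H\to(y(u)(t),h)_H$ for all $h\in H$, which is the asserted pointwise weak convergence in $H$.

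The main obstacle is the principal term, a product of the strongly convergent coefficient $u_n$ and the merely weakly convergent gradient $\nabla y(u_n)$: weak–strong pairing alone does not close it, and the key is to convert the $L^\infty$-bounded, $L^r$-convergent sequence $u_n$ into strong $L^2$ convergence of the product $(u_n-u)\nabla v$ via domination. Everything else is routine extraction and linear passage to the limit.
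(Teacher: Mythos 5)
Your proof is correct and follows the same overall skeleton as the paper's (uniform a priori bound from \cref{lem:state_existence}, extraction of weak limits, passage to the limit in the weak formulation, identification by uniqueness, subsequence--subsequence argument), but it differs in two technical devices. For the principal term, the paper tests with $\phi\in W\cap L^2(I,H^3(\Omega))$ so that $\nabla\phi\in L^2(I,C(\overline\Omega))$, estimates $\|u_n-u\|_{H}$ by interpolation, and then needs a density argument to recover arbitrary test functions $v\in W$; you instead pass to an a.e.-convergent subsequence of $\{u_n\}$ and use dominated convergence to get $(u_n-u)\nabla v\to 0$ in $L^2(Q)$ for \emph{general} $v\in W$, which is cleaner and avoids the density step entirely (and is legitimate, since the subsequence--subsequence argument is invoked at the end anyway). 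For the pointwise weak convergence in $H$, the paper uses the compact embedding $W^{1,2}(I)\hookrightarrow C(\overline I)$ applied to $t\mapsto (y_n(t),v)_H$, whereas you use the fundamental theorem of calculus in $H$ paired with weak convergence of $\partial_t y_n$ in $L^2(I,H)$ against $\chi_{(0,t)}h$; both work, and yours is more elementary. One small ordering issue: when you invoke uniqueness to conclude $\tilde y=y(u)$, you have at that point only verified the integral identity in \eqref{eq:weak}, not the initial condition $\tilde y(0)=y_0$, which is part of the definition of a weak solution and which the paper checks explicitly. Your own FTC identity supplies exactly this (testing it against $\psi\in L^2(I)$ in time identifies $\tilde y(t)=y_0+\int_0^t\partial_t\tilde y\ds$ and hence $\tilde y(0)=y_0$), so the fix is to run that computation \emph{before} appealing to uniqueness rather than after.
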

\begin{proof}
  The first assertion follows from the fact that $\hat U$ is closed in $L^r(\Omega)$. From $u_n\in \hat U$ and \cref{lem:state_existence}, the corresponding sequence $\{y(u_n)\}_{n\in \N}$ of solutions to \eqref{eq:weak} is well-defined and bounded in $L^2(I,V)\cap W^{1,2}(I,H)\cap W^{2,2}(I,V^*)$. By passing to successive subsequences (which we do not distinguish), we thus obtain that
  \begin{equation*}
    y_n \rightharpoonup \bar{y} \quad\text{ weakly in }\quad L^2(I,V) \cap W^{1,2}(I,H)\cap W^{2,2}(I,V^*).
  \end{equation*}
  From \eqref{eq:weak}, we in particular have that
  \begin{equation*}
    \int_{0}^{T} \, -(\partial_{t} y_n(t),\partial_{t} \phi(t))_H + (u_n \nabla y_n(t), \nabla \phi(t))_{\LL^2(\Omega)}\dt=\int_0^T(f(t), \phi(t))_H \dt + (y_1,\phi(0))_H
  \end{equation*}
  for arbitrary $\phi \in W \cap L^2(I,H^3(\Omega))$ with $\phi(T) =0$.
  Since $u_n\to u$ strongly in $L^r(\Omega)$ and $u_n,u\in \hat U\subset L^\infty(\Omega)$, we have for $r\in [1,2]$
  \begin{equation*}
    \|u_n-u\|_{H}\leq \|u_n-u\|_{L^\infty(\Omega)}^{(2-r)/2}\|u_n-u\|_{L^r(\Omega)}^{r/2}\leq C\|u_n-u\|_{L^r(\Omega)}^{r/2}
  \end{equation*}
  and thus for all $r\in [1,\infty)$
  \begin{multline*}
    \left|\int_0^T(u_n\nabla y_n-u\nabla \bar{y}, \nabla \phi)_{\LL^2(\Omega)}\dt\right|\\
    \begin{aligned}[t]
      &\leq \left|\int_0^T((u_n-u)\nabla \phi,\nabla y_n)_{\LL^2(\Omega)}\dt\right|+\left|\int_0^T(\nabla(y_n-\bar y),u\nabla \phi)_{\LL^2(\Omega)}\dt\right|\\
      &\leq C\|y_n\|_{L^2(I,V)}\|u_n-u\|_H\|\nabla \phi\|_{L^2(I,C(\overline \Omega))}+\left|\int_0^T(\nabla(y_n-\bar y),u\nabla \phi)_{\LL^2(\Omega)}\dt\right|\to 0.
    \end{aligned}
  \end{multline*}
  Then we can pass to the limit in the weak formulation to obtain
  \begin{equation*}
    \int_{0}^{T}- \, (\partial_{t} \bar{y}(t), \partial_{t} \phi(t))_H + (u \nabla \bar{y}(t), \nabla \phi(t))_{H} - (f(t), \phi(t))_H \dt - (y_1,\phi(0))_H = 0
  \end{equation*}
  for all such $\phi$. Since $u\in \hat U$ and since the set of functions with $\phi \in W \cap L^2(I,H^3(\Omega))$ and $\phi(T) =0$ is dense in $\{\varphi \in W: \varphi(0)=0\}$, the last equation also holds for all $v\in W$ with $v(T) =0$. The density can be shown by adapting the density argument of $C^\infty(\overline \Omega)$ in $V$; see, e.g., \cite[Cor.~9.8]{Brezis:2010a}.

  It remains to show that the limit $y$ satisfies the initial condition $y(0)=y_0$.
  First, for each $v\in H$ we have
  $(y_n(\cdot),v)_H \rightharpoonup (\bar y(\cdot),v)_H$ in $W^{1,2}(I)$. Hence
  \begin{equation*}
    (y_n(\cdot),v)_H \to (\bar y(\cdot),v)_H \quad\text{ in } C(\overline I).
  \end{equation*}
  due to the compact embedding of $W^{1,2}(I)$ to $C(\overline I)$.
  In particular this implies that $(y_n(0),v)_H=(y_0,v)_H =(\bar y(0),v)_H$ for all $v\in H$. Since $v\in H$ was arbitrary, this implies that $\bar y(0) =y_0$.
  This implies that $y=y(u)$, and since the solution of \eqref{eq:weak} is unique, a subsequence--subsequence argument shows that the full sequence converges weakly to $y(u)$.  By a similar argument, $y_n(t) \wkto \bar y(t)$ in $H$ for all $t\in \overline I$.
\end{proof}

Stronger continuity of $u\mapsto y(u)$ can be shown with respect to the $L^\infty$ topology for the controls.
\begin{lemma}\label{lem:state_strong}
  Assume that $f\in L^2(I,V)$ and let $\{u_n\}_{n\in\N}\subset \hat U$ be a sequence with $u_n\to u$ in $L^\infty(\Omega)$. Then $y(u_n)\to y(u)$ in $L^2(I,V)\cap W^{1,2}(I,H)$.
\end{lemma}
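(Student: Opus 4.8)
The plan is to upgrade the weak convergence already furnished by \cref{lem:state_continuity} to strong convergence by establishing convergence of the associated energies. Abbreviate $y_n:=y(u_n)$ and $y:=y(u)$. From \cref{lem:state_continuity} the full sequence satisfies $y_n\wkto y$ in $L^2(I,V)\cap W^{1,2}(I,H)\cap W^{2,2}(I,V^*)$, together with $y_n(t)\wkto y(t)$ in $H$ for every $t$. I would first record the two companion pointwise statements: since $y_n(t)$ is bounded in $V$ by \cref{lem:state_existence} and converges weakly in $H$, it converges weakly in $V$ to $y(t)$, so $\nabla y_n(t)\wkto\nabla y(t)$ in $\LL^2(\Omega)$; and applying the $W^{1,2}(I)\hookrightarrow C(\overline I)$ argument of \cref{lem:state_continuity} to $t\mapsto(\partial_t y_n(t),v)_H$ (for $v\in V$, then by density for $v\in H$) gives $\partial_t y_n(t)\wkto\partial_t y(t)$ in $H$ for every $t$. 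It then suffices to promote these to the \emph{strong} pointwise convergences $\partial_t y_n(t)\to\partial_t y(t)$ in $H$ and $\nabla y_n(t)\to\nabla y(t)$ in $\LL^2(\Omega)$: the uniform bound $\norm{y_n}_{C(\overline I,V)}+\norm{\partial_t y_n}_{C(\overline I,H)}\le C$ then yields $L^2(I,\cdot)$ convergence by dominated convergence in $t$, which combined with $y_n\to y$ in $L^2(I,H)$ from \cref{lem:state_lipschitz} gives the claim.

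The engine is the energy identity. For each $n$ the weak solution satisfies, for all $t\in\overline I$,
\begin{equation*}
  E_n(t):=\tfrac12\norm{\partial_t y_n(t)}_H^2+\tfrac12\int_\Omega u_n|\nabla y_n(t)|^2\dx = \tfrac12\norm{y_1}_H^2+\tfrac12\int_\Omega u_n|\nabla y_0|^2\dx+\int_0^t(f,\partial_t y_n)_H\ds,
\end{equation*}
and $y$ satisfies the analogous identity with $u_n$ replaced by $u$. Since $u_n\to u$ in $L^\infty(\Omega)$ and $\nabla y_0\in\LL^2(\Omega)$, the initial energies converge; since $\partial_t y_n\wkto\partial_t y$ in $L^2(I,H)$ and $f\in L^2(I,H)$, the work terms converge. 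Hence $E_n(t)\to E(t):=\tfrac12\norm{\partial_t y(t)}_H^2+\tfrac12\int_\Omega u|\nabla y(t)|^2\dx$ for every $t$.

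Now fix $t$ and write $a_n:=\partial_t y_n(t)$, $a:=\partial_t y(t)$, $b_n:=\nabla y_n(t)$, $b:=\nabla y(t)$. I consider the nonnegative quantity $Q_n:=\tfrac12\norm{a_n-a}_H^2+\tfrac12\int_\Omega u_n|b_n-b|^2\dx$ and expand the squares. In the limit: the pure quadratic block $\tfrac12\norm{a_n}_H^2+\tfrac12\int_\Omega u_n|b_n|^2\to E(t)$ by the previous step; $(a_n,a)_H\to\norm{a}_H^2$ by weak convergence; $\int_\Omega u_n b_n\cdot b=\int_\Omega b_n\cdot(u_n b)\to\int_\Omega u|b|^2$ because $u_n b\to u b$ \emph{strongly} in $\LL^2(\Omega)$ (this is exactly where $u_n\to u$ in $L^\infty$ enters) while $b_n\wkto b$; and $\int_\Omega u_n|b|^2\to\int_\Omega u|b|^2$. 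Adding the contributions gives $Q_n\to E(t)-\tfrac12\norm{a}_H^2-\tfrac12\int_\Omega u|b|^2=0$. Since $u_n\ge\underline{u}>0$ we have $Q_n\ge\tfrac12\norm{a_n-a}_H^2+\tfrac{\underline u}{2}\norm{b_n-b}_{\LL^2(\Omega)}^2$, so both strong pointwise convergences follow, completing the reduction of the first paragraph.

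The one delicate point—and the step I expect to be the main obstacle—is the justification of the energy identity. Because the solutions lie only in $Z$, the function $\partial_t y_n$ belongs to $C(\overline I,H)$ but not to $L^2(I,V)$ and is therefore \emph{not} an admissible test function in \eqref{eq:weak}; the identity cannot be obtained by naively ``testing with $\partial_t y_n$''. I would derive it at the level of a Galerkin (or time-mollification) approximation, where it holds exactly, and pass to the limit by a standard regularization argument, which is classical and uses only $f\in L^2(I,H)$. One could instead attempt a direct Gronwall estimate on $\delta y_n:=y_n-y$, which solves an equation of the form \eqref{eq:difference} with source $\div(\delta u_n\nabla y)$ and vanishing initial data; but integrating by parts in time there forces the bound $\partial_t y\in L^2(I,V)$, which is not guaranteed under the present hypotheses, so I favor the energy-convergence argument above.
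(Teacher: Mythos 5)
Your proposal is correct and follows essentially the same route as the paper: both rest on the energy identity for the weak solution, convergence of the energies, and the principle that weak convergence plus convergence of norms yields strong convergence (you argue pointwise in $t$ by expanding the square and then integrate via dominated convergence, while the paper integrates first and invokes the Radon--Riesz property in the Hilbert space $L^2(I,V_u)\cap W^{1,2}(I,H)$ with the equivalent norm $\norm{\cdot}_{V_u}$). The delicate point you correctly flag --- that $\partial_t y_n$ is not an admissible test function, so the energy identity requires a regularization argument --- is resolved in the paper by appealing to the Lions--Magenes lemma \cite[Lem.~8.3]{LiMa72}.
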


\begin{proof}
  First, the embedding $L^\infty(\Omega)\subset L^r(\Omega)$, $r\in [1,\infty)$, for bounded $\Omega$ together with \cref{lem:state_continuity} shows that $y_n:= y(u_n)\wkto y(u)$ in $L^2(I,V)\cap W^{1,2}(I,H)\cap W^{2,2}(I,V^*)$.

  We now introduce for $u\in \hat U$ and $y:=y(u)$ the energy
  \begin{equation*}
    \mathcal{E}_u(t) := \frac12 \scalprod{u\nabla y(t),\nabla y(t)}_{\LL^2(\Omega)} + \frac12\norm{\partial_t y(t)}_{H}^2 + \frac12\norm{y(t)}_H^2 \quad\text{for a.e.}~ t\in I.
  \end{equation*}
  By the Lions--Magenes Lemma (\cite[Lem.~8.3]{LiMa72}, cf.~also \cite[(2.24), p.~24]{Stolk00}), we have that
  \begin{equation*}
    \frac{d}{dt} \mathcal{E}_u(t) = \scalprod{f(t),\partial_t y(t)}_{H} + \scalprod{y(t),\partial_t y(t)}_H\quad\text{in }L^1(I),
  \end{equation*}
  and thus by the fundamental theorem of calculus we find that
  \begin{equation}\label{eq:lionsmagenes}
    \mathcal{E}_u(t) = \mathcal{E}_u(0) + \int_0^t \scalprod{f(s),\partial_t y(s)}_{H} + \scalprod{y(s),\partial_t y(s)}_H\ds.
  \end{equation}

  We now define for $v\in V$
  \begin{equation*}
    \norm{v}_{V_u}^2:=\scalprod{u\nabla v ,\nabla v}_{\LL^2(\Omega)} + \norm{v}_H^2,
  \end{equation*}
  which is an equivalent norm on $V$ for any $u\in \hat U$.
  Subtracting \eqref{eq:lionsmagenes} for $\mathcal{E}_u$ and $\mathcal{E}_{u_n}$ and adding the productive zero then yields for almost every $t\in I$ that
  \begin{multline}\label{eq:state_strong1}
    \frac12 \norm{y(t)}_{V_u}^2 + \frac12\norm{\partial_t y(t)}_H^2
    -\left(\frac12 \norm{y_n(t)}_{V_u}^2 + \frac12\norm{\partial_t y_n(t)}_H^2\right) \\
    =
    \frac12\scalprod{(u_n-u)\nabla y_n(t),\nabla y_n(t)}_{\LL^2(\Omega)} +
    \frac12\scalprod{(u-u_n)\nabla y_0,\nabla y_0}_{\LL^2(\Omega)} \\
    + \int_0^t \scalprod{f(s),\partial_t y(s)-\partial_t y_n(s)}_H + \scalprod{y(s)-y_n(s),\partial_t y(s)}_H + \scalprod{y_n(s),\partial_t y(s)-\partial_t y_n(s)}_H \ds,
  \end{multline}
  and hence that
  \begin{multline*}
    \left|\frac12 \norm{y(t)}_{V_u}^2 + \frac12\norm{\partial_t y(t)}_H^2
    -\left(\frac12 \norm{y_n(t)}_{V_u}^2 + \frac12\norm{\partial_t y_n(t)}_H^2\right)\right|
    \leq \norm{u_n-u}_{L^\infty(\Omega)} \norm{y_n}_{C(\overline I,V)}^2\\
    +(\|f\|_{L^2(I,V)}+\|y_n\|_{L^2(I,V)})\|\partial_t y-\partial_ty_n\|_{L^2(I,V^\ast)}+\|\partial_t y\|_{L^2(I,H)}\|y-y_n\|_{L^2(I,H)}.
  \end{multline*}
  We know from \cref{lem:state_continuity} that $y_n\rightharpoonup y$ in $L^2(I,V)\cap W^{1,2}(I,H)\cap W^{2,2}(I,V^\ast)$. Thus the Aubin--Lions Lemma and the compactness of the embeddings $V\hookrightarrow H$ and $H\hookrightarrow V^\ast$ imply that $y_n \to y$ in $L^2(I,H)\cap W^{1,2}(I,V^\ast)$.
  Thus we have
  \begin{equation}\label{eq:state_strong2}
    \int_0^T\norm{y_n(t)}_{V_u}^2+\norm{\partial_t y_n(t)}_{H}^2\dt \to \int_0^T\norm{y(t)}_{V_u}^2+\norm{\partial_t y(t)}_{H}^2\dt.
  \end{equation}
  Since $\norm{\cdot}_{V_u}$ is an equivalent norm on $V$, we have that the normed vector space $V_u := (V,\norm{\cdot}_{V_u})$ is equivalent to $V$ and hence that $L^2(I,V_u)\cap W^{1,2}(I,H)\simeq L^2(I,V)\cap W^{1,2}(I,H)$. This implies that $y_n\wkto y$ also in $L^2(I,V_u)\cap W^{1,2}(I,H)$, and together with \eqref{eq:state_strong2} the Radon--Riesz property of Hilbert spaces implies that $y_n\to y$ strongly in $L^2(I,V_u)\cap W^{1,2}(I,H)$. Appealing again to the equivalence of $V$ and $V_u$ then yields the claim.
\end{proof}

\bigskip

Under additional assumptions, we can show an improved regularity result.
\begin{assumption}\label{ass:data} The data satisfy $f\in L^2(I;V)$ and $(y_0,y_1)\in H^2(\Omega)\times H^1(\Omega)$ with $\partial_\nu y_0 =0$. Furthermore,
  \begin{enumerate}
    \item $\hat u$ is constant on $\Omega\setminus \omega_c$ and
    \item $y_0$ is constant on $\omega_c$ and $\overline{\omega_c} \subset \Omega$.
  \end{enumerate}
\end{assumption}

The following result will be used to show Fréchet differentiability of the tracking term in \cref{lem:tracking_frechet} below.
\begin{proposition} \label{prop:state_regularity}
  Let $u\in \hat U$ and \cref{ass:data} hold.
  Then there exists $q > 2$ and a constant $\hat C$ independent of $u$ such that $y(u) \in {L^\infty(I,W^{1,q}(\Omega))}$ and
  for all $u \in \hat U$,
  \begin{equation}\label{eq:state-regularity}
    \norm{y(u)}_{L^\infty(I,W^{1,q}(\Omega))} \le \hat C\left(\norm{y_1}_{V} + \norm{y_0}_{H^2(\Omega)}+ \norm{f}^2_{L^2(I,V)}\right).
  \end{equation}
\end{proposition}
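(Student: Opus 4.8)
The plan is to combine an elliptic maximal regularity estimate of Gröger type with an energy estimate that controls $\div(u\nabla y)$ uniformly in time. The point is that the natural regularity from \cref{lem:state_existence} only gives $\div(u\nabla y)=\partial_{tt}y-f\in L^2(I,V^*)$, which is too weak; the whole difficulty is to upgrade this to $\div(u\nabla y)\in L^\infty(I,H)$, after which the spatial regularity follows pointwise in $t$.

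First I would invoke Gröger's theorem \cite{Groeger}: since $\partial\Omega$ is regular and $\underline u\le u\le\overline u$, there is $q>2$, independent of $u\in\hat U$, such that $\mathcal{A}_u:=-\div(u\nabla\cdot)+\Id$ is an isomorphism from $W^{1,q}(\Omega)$ onto $(W^{1,q'}(\Omega))^*$ subject to homogeneous Neumann conditions, with inverse bounded uniformly in $u\in\hat U$. For $q$ close to $2$ and $d\in\{2,3\}$ one has $W^{1,q'}(\Omega)\hookrightarrow L^2(\Omega)$, hence $H\hookrightarrow(W^{1,q'})^*$. Thus, once $\div(u\nabla y)\in L^\infty(I,H)$ is known, applying the inverse of $\mathcal{A}_u$ to the identity $\mathcal{A}_u y(t)=y(t)-\div(u\nabla y(t))$ gives
\[ \norm{y(t)}_{W^{1,q}(\Omega)}\le C\norm{y(t)-\div(u\nabla y(t))}_{(W^{1,q'})^*}\le C'\left(\norm{y(t)}_H+\norm{\div(u\nabla y(t))}_H\right), \]
and taking the supremum over $t$, together with $y\in L^\infty(I,V)$ from \cref{lem:state_existence}, yields the claim with a constant independent of $u$.

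The heart of the proof is therefore the bound $\div(u\nabla y)\in L^\infty(I,H)$. Writing $\sigma:=-\div(u\nabla y)$, so that the equation reads $\partial_{tt}y+\sigma=f$, I would test with $\partial_t\sigma=-\div(u\nabla\partial_t y)$ and integrate by parts in space. Using $\partial_\nu y=0$ (hence $\partial_\nu\partial_t y=0$) to discard the boundary terms and $f(t)\in V$ to integrate the forcing by parts, this produces the energy identity
\[ \frac{d}{dt}\left(\tfrac12\norm{\sigma(t)}_H^2+\tfrac12\scalprod{u\nabla\partial_t y(t),\nabla\partial_t y(t)}_{\LL^2(\Omega)}\right)=\scalprod{u\nabla f(t),\nabla\partial_t y(t)}_{\LL^2(\Omega)}. \]
The initial energy is finite and bounded independently of the control: \cref{ass:data} guarantees $\sigma(0)=-\div(u\nabla y_0)\in H$ with $\norm{\sigma(0)}_H\le\overline u\,\norm{y_0}_{H^2(\Omega)}$, because $y_0$ is constant on $\omega_c$ (so $u\nabla y_0=0$ there, \emph{regardless} of $u$, and the trace of $\nabla y_0\in H^1(\Omega)$ on $\partial\omega_c$ vanishes, ruling out a singular part) while $\hat u$ is constant on $\Omega\setminus\omega_c$ (so $\div(u\nabla y_0)=\hat u\Delta y_0$ there); the term $\partial_t y(0)=y_1\in V$ controls the second initial contribution. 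Integrating in time and estimating the right-hand side by $\overline u\norm{f}_{L^2(I,V)}\norm{\nabla\partial_t y}_{L^2(I,H)}$, the gradient factor is itself controlled by the left-hand side, so a Young/Grönwall argument closes the estimate and simultaneously yields $\partial_t y\in L^\infty(I,V)$ and $\sigma\in L^\infty(I,H)$. Since the forcing enters only through the product $\norm{f}_{L^2(I,V)}\norm{\partial_t y}_{L^2(I,V)}$ with a factor that is itself $O(\norm{f}_{L^2(I,V)}+\norm{y_0}_{H^2}+\norm{y_1}_V)$, this is exactly what gives rise to the quadratic term $\norm{f}^2_{L^2(I,V)}$ recorded in \eqref{eq:state-regularity}.

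The main obstacle is the rigor of this energy identity: since $u$ is merely $L^\infty$, the test function $\partial_t\sigma=-\div(u\nabla\partial_t y)$ and the object $\nabla\partial_t y$ are not a priori admissible. I would therefore carry out the computation on regularized problems — via a Galerkin approximation in space (or difference quotients in time) where all manipulations are legitimate — deriving the identity there, controlling the initial data uniformly through \cref{ass:data}, and passing to the limit using weak lower semicontinuity of the norms together with the weak convergence already established in the proof of \cref{lem:state_strong}; the Lions--Magenes lemma invoked there indicates precisely the care needed for the temporal terms. The uniformity of all constants in $u\in\hat U$ is then inherited from Gröger's theorem and from the control-independent bound on $\sigma(0)$.
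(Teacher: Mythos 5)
Your proposal is correct in substance and rests on the same two pillars as the paper's proof: the uniform bound on $\div(u\nabla y)$ in $L^\infty(I,H)$ obtained by testing with $-\partial_t\div(u\nabla y)$ and exploiting \cref{ass:data} to control the initial energy (in particular the observation that $u\nabla y_0$ vanishes on $\omega_c$ and equals $\hat u\nabla y_0$ outside, with no singular part across $\partial\omega_c$), followed by Gröger's maximal elliptic regularity applied to $-\div(u\nabla y(t))+\lambda y(t)$ for a.e.\ $t$. Where you diverge is in how the formal energy identity is made rigorous: the paper mollifies the \emph{coefficient} (constructing $u_n\in C^\infty\cap\Uext$ with $u_n=\hat u$ off $\omega_c$) and additionally smooths the \emph{data} to $H^3\times H^2\times W^{2,2}(I,V)$ (its Step 2, which requires a rather delicate approximation of $y_0$ preserving both the constancy on $\omega_c$ and the Neumann condition), so that classical regularity theory for smooth coefficients yields $y_n\in W^{1,2}(I,H^2)\cap W^{2,2}(I,V)$ and the computation is literal; you instead propose a Galerkin regularization for the nonsmooth $u$ itself. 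Your route is viable and arguably more economical -- it needs only $\div(u\nabla y_0)\in H$, $y_1\in V$, $f\in L^2(I,V)$, so the paper's Step 2 disappears -- but it works only if the Galerkin basis is chosen as the eigenfunctions of the Neumann realization of $-\div(u\nabla\cdot)+\Id$, since only then is the test function $-\div(u\nabla\partial_t y_m)$ an element of the trial space; a generic (e.g.\ finite element) basis, or difference quotients in time alone, would not make this test function admissible, and this is the one point your sketch leaves unresolved. With that choice made explicit, the spectral projections commute with the operator, so the initial quantities $\norm{\div(u\nabla P_m y_0)}_H$ and $\norm{\sqrt{u}\,\nabla P_m y_1}_{\LL^2(\Omega)}$ are controlled uniformly and the limit passage via weak-$*$ lower semicontinuity goes through exactly as you describe.
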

\begin{proof} We proceed in two steps.

  \emph{Step 1.} First we assume that additionally
  \begin{equation}\label{eq:ass-extra-reg}
    f \in W^{2,2}(I,V),\qquad y_0 \in H^3(\Omega) \quad\text{with}\quad \partial_\nu y_0 = 0,
    \qquad\text{and }\quad y_1 \in H^2(\Omega).
  \end{equation}
  Let $u\in \hat U$ and approximate $u$ by $\{u_n\} \subset C^\infty(\overline \Omega) \cap \hat U$ with $u_n\to u$ in $L^r(\Omega)$ for some $r\in [1,\infty)$ with $u_n=\hat u$ in $\Omega\setminus \omega_c$.
  Such a sequence can be found by first approximating $u$ by $\tilde u_n \in C^\infty(\mathbb{R}^d)$ and $\tilde u_n = \hat u$ in $\Omega\setminus \omega_c$; this sequence in turn is constructed by first introducing an intermediate approximation of functions $\hat u_n$ with the property that $\lim_{n\to \infty} \hat u_n =u$ in $L^r(\Omega)$ and $\hat u_n= \hat u$ in $\Omega \setminus \hat\omega_{c,n}$, where the closure of $\hat \omega_{c,n}$ is contained in $\omega_c$ and $0<\mathrm{dist}(\partial \omega_c, \hat \omega_{c,n}) \le n^{-1}$.
  Then we use convolution by mollifiers of the functions $\hat u_n$ to obtain functions $\tilde u_n\in C^\infty(\mathbb{R}^d)$ that satisfy $\lim_{n\to \infty} \tilde u_n =u$ in $L^r(\Omega)$, see e.g. \cite[page 132]{Grigoryan:2009}, and $\tilde u_n=\hat u$ in $\Omega\setminus \omega_c$.
  Next we choose functions $\underline{\varphi}^n$ and $\overline \varphi _n$ in $C^\infty(\mathbb{R})$ with a Lipschitz constant $L$ and such that $\underline{u}\le\underline{\varphi}^n $, $\overline \varphi_n\le \overline u$, and $\underline{\varphi}^n(s)\to \max(\underline u,s)$, $\overline \varphi_n(s) \to \min (\overline u,s)$ for all $s\in\mathbb{R}$; see \cite[page 125]{Grigoryan:2009}.
  We then set $u_n= \overline \varphi_n(\underline{\varphi}^n (\tilde u_n))$, and estimate
  \begin{equation*}
    \begin{aligned}
      \|u-u_n\|_{L^r(\Omega)} &\le \|u-\overline \varphi_n(\underline{\varphi}^n (u))\|_{L^r(\Omega)} + \|\overline \varphi_n(\underline{\varphi}^n (u)) - \overline \varphi_n(\underline{\varphi}^n (u_n)) \|_{L^r(\Omega)}\\
      &\le \|u-\overline \varphi_n(\underline{\varphi}^n(u))\|_{L^r(\Omega)} + L^2 \|u-\tilde u_n\|_{L^r(\Omega)} \to 0,
    \end{aligned}
  \end{equation*}
  using Lebesgue's bounded convergence theorem.

  We replace $u$ in \eqref{eq:wave-equation} by $u_n$. Due to the regularity assumptions \eqref{eq:ass-extra-reg} and the assumption that $y_0$ is constant on $\omega_c$, we have
  \begin{equation*}
    y_0 \in H^3(\Omega), \quad y_1 \in H^2(\Omega), \quad f(0) + \div (u_n \nabla y_0) \in V, \quad f'(0) + \div (u_n \nabla y_1) \in H.
  \end{equation*}
  Together with ${\partial_\nu y_0}= 0$ and $f \in W^{2,2}(I,V) \subset W^{2,2}(I,H)$, these properties allow applying Theorem 30.3 (with $k=3$) and Theorem 30.4 in \cite{Wloka},
  which guarantee that $y_n = y(u_n) \in W^{1,2}(I,H^2(\Omega))\cap W^{2,2}(I,V)$ and ${\partial_\nu y_n}(t) = 0$ on $\partial \Omega$ for $t \in \overline{I}$.
  Then we multiply \eqref{eq:wave-equation} with $-{\partial_t} \div (u_n\nabla y_n(t))$ and integrate over $\Omega$. Integrating by parts on the right-hand side and using that $\partial_\nu\partial_ty_n=0$ on $\partial\Omega$,
  we obtain
  \begin{equation*}
    (u_n \nabla \partial_{tt} y_n(t), \nabla \partial_t y_n(t))_{\LL^2(\Omega)} +\frac{1}{2} \partial_t \norm{\div (u_n\nabla y_n(t))}^2_H = (\nabla f(t), u_n \nabla \partial_t y_n(t))_{\LL^2(\Omega)}
  \end{equation*}
  and thus
  \begin{equation*}
    {\partial_t} \norm{ \sqrt{u_n} \nabla \partial_t y_n(t)}^2_{\LL^2(\Omega)} + \partial_t \norm{\div (u_n \nabla y_n(t))}^2_H \le \norm{\sqrt{u_n}\nabla f(t)}^2_{\LL^2(\Omega)} + \norm{\sqrt{u_n} \nabla \partial_t y_n(t)}^2_{\LL^2(\Omega)}.
  \end{equation*}
  Integrating this expression on $(0,t)$, we find for $t \in (0,T]$ that
  \begin{multline*}
    \norm{\sqrt{u_n} \nabla \partial_t y_n(t)}^2_{\LL^2(\Omega)} + \norm{\div (u_n \nabla y_n(t))}^2_H \\
    \le \norm{\sqrt{u_n} \nabla y_1}^2_{\LL^2(\Omega)} + \norm{\div (u_n \nabla y_0)}^2_H + \int_{0}^{t} \norm{\sqrt{u_n}\nabla f(s)}^2_{\LL^2(\Omega)} \ds + \int_{0}^{t} \norm{\sqrt{u_n} \nabla \partial_t y_n(s)}^2_{\LL^2(\Omega)} \ds.
  \end{multline*}
  Gronwall's inequality then implies that for each $t \in (0,T]$,
  \begin{equation*}
    \norm{\div (u_n \nabla y_n(t))}^2_H \le \left(\norm{\sqrt{u_n} \nabla y_1}^2_{\LL^2(\Omega)}+ \norm{\div (u_n \nabla y_0)}^2_H + \overline{u}\norm{f}^2_{L^2(I,V)}\right) e^{T}.
  \end{equation*}
  Since $y_1 \in H^2(\Omega)$, it follows that $\{\sqrt{u_n} \nabla y_1\}_{n\in\N}$ is bounded in $\LL^2(\Omega)$.
  Moreover, $\div (u_n \nabla y_0)|_{\Omega \setminus \omega_c} = \div (\hat u \nabla y_0)|_{\Omega \setminus \omega_c} = \hat u \Delta y_0|_{\Omega \setminus \omega_c}$ and $\div (u_n \nabla y_0)|_{\omega_c} =0$. Hence $\{\div (u_n \nabla y_0)\}_{n\in\N}$ is bounded in
  $H$. We can thus conclude that $\{\div (u_n \nabla y_n)\}_{n\in\N}$ is bounded in $L^\infty(I, H)$.

  Our next aim is to obtain $L^\infty(I,W^{1,q}(\Omega))$ regularity and boundedness for $y_n$ for some $q > 2$. For this purpose, we define for some $\lambda > 0$
  \begin{equation} \label{eq:regularity-gn}
    g_n := -\div(u_n \nabla y_n) + \lambda y_n
  \end{equation}
  and note that $\{g_n\}_{n\in\N}$ is bounded in $L^\infty(I,H)$.
  Furthermore, Sobolev's embedding theorem implies that $W^{1,s'}(\Omega) \hookrightarrow L^2(\Omega)$ for every $s' \ge 1$ in case $d = 2$, and for every $s' \ge \frac{6}{5}$ in case $d = 3$. Following the notation of \cite{Groeger}, we denote by $W^{-1,s}(\Omega)$ the dual space of $W^{1,s'}(\Omega)$ with $s$ the conjugate of $s'$. Then we have $H \hookrightarrow W^{-1,s}(\Omega)$, where $s \in [1, \infty)$ for $d = 2$ and $s \in [1,6]$ for $d = 3$. It follows that $\{g_n\}_{n\in\N}$ is bounded in $L^\infty(I,W^{-1,s}(\Omega))$.
  Considering now \eqref{eq:regularity-gn} (together with homogeneous Neumann boundary conditions) for a.e. $t\in I$ as an equation for $y_n(t)$, this implies that there exists some $q > 2$ such that
  \begin{equation}\label{eq:regularity-yn}
    \|y_n(t)\|_{W^{1,q}(\Omega)}\leq C\|g_n\|_H\leq \hat C\left(\norm{y_1}_{V} + \norm{y_0}_{H^2(\Omega)} + \norm{f}^2_{L^2(I,V)}\right),
  \end{equation}
  where the constant $C$ depends only on $\underline u$, $\overline u$ and $q$, but not on $t$; see \cite[Thm.~1]{Groeger}.
  Hence $\{y_n\}_{n\in\N}$ is bounded in $L^\infty(I,W^{1,q}(\Omega))$.
  Since $L^1(I)$ and $W^{1,q}(\Omega)$ are separable with the latter being reflexive, $L^\infty(I,W^{1,q}(\Omega))$ is the dual of a separable space; see, e.g., \cite[Thm.~8.18.3]{Ed65}. Hence there exists a subsequence with $y_n \rightharpoonup^* \bar{y} \in L^\infty(I,W^{1,q}(\Omega))$.

  Finally, from \cref{lem:state_continuity} we also have that $y_n\wkto y(u)$ in $L^2(I,V)\cap W^{1,2}(I,H)$ and hence, by uniqueness of $y(u)$, that $y(u)=\bar y \in L^\infty(I,W^{1,q}(\Omega))$. Using weak$^*$ semi-continuity of norms (cf., e.g., \cite[p.~63]{Brezis:2010a}), we can now pass to the limit in \eqref{eq:regularity-yn} to obtain \eqref{eq:state-regularity}, for those $(y_0,y_1,f)$ which satisfy the additional regularity assumption \eqref{eq:ass-extra-reg}.

  \bigskip

  \emph{Step 2.} We relax the requirements on the problem data and choose an arbitrary $(y_0,y_1,f)\in X:= H^2(\Omega)\times H^1(\Omega)\times L^2(I;V)$, with $\partial_\nu y_0 =0$. Then there exists $(y^n_0,y^n_1,f^n)\in H^3(\Omega)\times H^2(\Omega)\times W^{2,2}(I;V)$, with $\partial_\nu y^n_0 =0$ such that
  $\lim_{n\to \infty}(y^n_0,y^n_1,f^n)= (y_0,y_1,f) $ in $X$. As this is standard for the second and third component, we only address the first one.
  Let $\omega_c^c := \Omega \setminus \overline {\omega_c}$. By assumption, $\partial \Omega \cap \partial \omega_c= \emptyset$; in addition, $\Omega$ and $\omega_c$ are $C^{2,1}$ domains, and thus $\omega_c^c$ is a $C^{2,1}$ domain as well. It is in this step that the $C^{2,1}$ regularity of the domains is used.
  Since $y_0 \in H^2(\Omega)$, we have $y_0|_{\partial \Omega} \in H^{3/2}(\partial \Omega)$. Moreover, $y_0|_{\partial \omega_c}=:y_c$ is constant and $\partial_\nu y_0|_{\partial \omega^c_c} = \partial_\nu y_0|_{\partial \omega_c \cup \partial \Omega} =0$.
  Let $\tilde v_n \in H^{5/2}(\partial \Omega)$ be such that $\tilde v_n \to y_0|_{\partial \Omega}$ in $H^{3/2}(\partial \Omega)$. Accordingly let $v_n \in H^3(\omega^c_c)$ with $v_n|_{\partial \omega_c}=y_c$, $\partial_\nu v_n |_{\partial \omega^c_c}=0$, $\partial_{x_i x_j}v_n|_{\partial\omega_c}=0$, $v_n|_{\partial \Omega} = \tilde v_n$, and $v_n\to v$ in $H^2(\omega^c_c)$, where $v\in H^2(\omega^c_c)$ satisfies $v|_{\partial \omega_c}=y_c$, $\partial_\nu v|_{\partial \omega^c_c}=0$,
  and $v|_{\partial \Omega} = y_0|_{\partial \Omega}$.
  Denoting by $v_{n,\mathrm{ext}}$ and $v_{\mathrm{ext}}$ the extensions of $v_n$ and $v$ by the constant $y_c$ on $\omega_c$, we have $v_{n,\mathrm{ext}} \to v_{\mathrm{ext}} \in H^2(\Omega)$, $\partial_\nu v_{n,\mathrm{ext}}|_{\partial \Omega}=0$, $v_{n,\mathrm{ext}}|_{\omega_c}= v_{\mathrm{ext}}|_{\omega_c}=y_c$, and $v_{n,\mathrm{ext}} \in H^3(\Omega) $, where we use that $\partial_{x_i x_j}v_n|_{\partial\omega_c}=0$.
  Next, observe that $y_0-v \in H^2_0(\omega^c_c)$. This implies the existence of functions $w_n\in C^\infty(\omega_c^c)$ with compact support in $\omega^c_c$ such that $w_n \to y_0-v$ in $H^2(\omega_c^c)$; see, e.g., \cite[pp. 17 and 31]{Grisvard}. Denoting the extension by zero to $\omega_c$ of $w_n$ by $w_{n,\mathrm{ext}}$, we have $w_{n,\mathrm{ext}}\in H^3(\Omega), w_{n,\mathrm{ext}} \to y_0-v_{\mathrm{ext}}$ in $H^2(\Omega)$, and $\partial_\nu w_{n,\mathrm{ext}}=0|_{\partial \Omega}$. Finally, the sequence $y^n_0=v_{n,\mathrm{ext}}+w_{n,\mathrm{ext}}$ defines the desired approximation of $y_0$ such that $y^n_0|_{\omega_c}=y_c$, $\partial_\nu y_n|_{\partial \Omega}=0$, and $y^n_0 \to y_0$ in $H^2(\Omega)$.

  We next define $y^n$ as the solution to \eqref{eq:wave-equation} with data $(y_0^n,y_1^n,f^n)$. From \cref{lem:state_existence} and \eqref{eq:state-regularity} for the smooth data $(y_0^n,y_1^n,f^n)$ we deduce that $y^n \rightharpoonup y$ in $L^2(I,V)\cap W^{1,2}(I,H)$ and $y^n \wkto^* y$ in $L^\infty(I,W^{1,q}(\Omega))$, where $y$ is the solution of \eqref{eq:wave-equation} with data $(y_0,y_1,f)$, and
  \begin{equation*}
    \norm{y^n}_{L^\infty(I,W^{1,q}(\Omega))} \le \hat C\left(\norm{y^n_1}_{V} + \norm{y^n_0}_{H^2(\Omega)}+ \norm{f^n}^2_{L^2(I,V)}\right)
  \end{equation*}
  for all $n$. Passing to the limit as $n\to \infty$, we obtain \eqref{eq:state-regularity} for $(y_0,y_1,f)\in X$ with $\partial_\nu y_0=0$.
\end{proof}

\begin{remark}
  If \cref{ass:data} holds, the requirement in \cref{lem:state_strong} on the convergence of $u_n$ can be relaxed to $u_n\to u$ in $L^{\frac{q}{q-2}}(\Omega)$, where $q$ is the exponent from \cref{prop:state_regularity}. In this case, the first term on the right-hand side in \eqref{eq:state_strong1} can be estimated by Hölder's inequality as
  \begin{equation*}
    \int_{\Omega} |u_n-u| |\nabla y_n(t)|^2 \dx  \leq \norm{u_n-u}_{L^{\frac{q}{q-2}}(\Omega)}\norm{y_n(t)}_{L^\infty(I,W^{1,q}(\Omega))}^2\to 0,
  \end{equation*}
  where we used \cref{prop:state_regularity}. Then again $y(u_n) \to y(u)$ in $W$.
\end{remark}

\begin{remark}
  In \cref{ass:data}\,(ii), the requirement $\overline {\omega_c} \subset \Omega$ was only used in Step 2 of the proof of \cref{prop:state_regularity}. It it is not necessary if instead $y_0\in H^3(\Omega)$ is assumed.
\end{remark}

\section{Existence and optimality conditions}\label{sec:existence}

Deriving useful optimality conditions requires replacing the pointwise control constraints with a differentiable approximation of a cutoff function. We thus introduce the superposition operator
\begin{equation*}
  \Phi_\eps: L^1(\omega_c) \to U, \qquad [\Phi_\eps(u)](x) := \phi_\eps(u(x)),
\end{equation*}
where $\phi_\eps:\R\to [u_{\min},u_{\max}]$ is such that $\Phi_\eps$ is Lipschitz continuous from $L^r(\omega_c)\to L^r(\omega_c)$ for every $r\in [1,\infty]$ and $\eps\geq 0$ and Fréchet differentiable from $L^\infty(\omega_c)\to L^\infty(\omega_c)$ for $\eps>0$ (and thus ensuring Fréchet differentiability of the tracking term; see \cref{lem:tracking_frechet} below).
The construction of such a $\phi_\eps$ and the characterization of the Fréchet derivative of $\Phi_\eps$ via pointwise a.e. multiplication can be carried out in the same way as in \cite[\S\,2.3]{CKK:2017}.

We then consider for $\eps\geq 0$ the reduced, unconstrained optimization problem
\begin{equation}\label{eq:problem}
  \min_{u\in BV(\omega_c)} J_\eps(u)
\end{equation}
for
\begin{equation*}
  J_\eps(u) := \frac 12 \|By(\hat E\Phi_\eps(u))-y_d\|_{\calO}^2+ \alpha G(u) + \beta TV(u)
\end{equation*}
for some $y_d \in \calO$ and $\alpha,\beta>0$, where $u \mapsto y(u)$ denotes the solution mapping of \eqref{eq:weak} introduced in the previous section and
$\hat E$ is the affine extension operator from $\omega_c$ to $\Omega$ defined in \eqref{eq:extension}.
We point out that the role of $\eps$ is not that of a smoothing parameter for the optimization problem, which remains nonsmooth for $\eps>0$ due to the presence of $G$ and $\TV$; it merely influences the behavior of the cutoff function near the upper and lower values of the pointwise bounds for the coefficient.

Existence of optimal controls now follows analogously to \cite[Prop.~3.1]{CKK:2017}.
\begin{proposition}\label{thm:existence:bv}
  For every $\eps\geq 0$, there exists a global minimizer $\bar u\in BV(\omega_c)\cap U$ of $J_\eps$.
\end{proposition}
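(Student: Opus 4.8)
The plan is to argue by the direct method of the calculus of variations, the only nontrivial ingredient being the weak continuity of the coefficient-to-observation map supplied by \cref{lem:state_continuity}. Since the tracking term, $\alpha G$, and $\beta\TV$ are all nonnegative, $J_\eps$ is bounded below by $0$, so $\mu:=\inf_{u\in BV(\omega_c)}J_\eps(u)$ is finite and we may fix a minimizing sequence $\{u_n\}\subset BV(\omega_c)$ with $J_\eps(u_n)\to\mu$. First I would establish coercivity in $BV(\omega_c)$: from $J_\eps(u_n)\le\mu+1$ for large $n$ and the nonnegativity of the remaining terms we get $\beta\TV(u_n)\le\mu+1$ and $\alpha G(u_n)\le\mu+1$, and the lower bound in \ref{ass:g:equiv}, $\tfrac12 u_2\|u_n\|_{L^1(\omega_c)}\le G(u_n)$, then bounds $\|u_n\|_{L^1(\omega_c)}$. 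Hence $\{u_n\}$ is bounded in $BV(\omega_c)$, i.e. $\sup_n(\|u_n\|_{L^1(\omega_c)}+\TV(u_n))<\infty$.

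Next I would extract a limit. By the compact embedding $BV(\omega_c)\hookrightarrow L^1(\omega_c)$ there is a (not relabeled) subsequence and some $\bar u\in L^1(\omega_c)$ with $u_n\to\bar u$ in $L^1(\omega_c)$; lower semicontinuity of $\TV$ with respect to $L^1$ convergence, \eqref{eq:tv_lsc}, gives $\TV(\bar u)<\infty$, so that $\bar u\in BV(\omega_c)$. Then I would verify that each term of $J_\eps$ is lower semicontinuous along this sequence. For $\TV$ this is again \eqref{eq:tv_lsc}, and since $g$ has linear growth, $G$ is continuous on $L^1(\omega_c)$, so in fact $G(u_n)\to G(\bar u)$. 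For the tracking term, note that $\Phi_\eps$ is Lipschitz on $L^1(\omega_c)$ and takes values in $U$; hence $\Phi_\eps(u_n)\to\Phi_\eps(\bar u)$ in $L^1(\omega_c)$ while remaining uniformly bounded in $L^\infty(\omega_c)$, which by interpolation yields convergence in every $L^r(\omega_c)$, $r<\infty$. Applying the affine bounded operator $\hat E$, the coefficients $v_n:=\hat E\Phi_\eps(u_n)\in\hat U$ converge to $\bar v:=\hat E\Phi_\eps(\bar u)$ in $L^r(\Omega)$, so \cref{lem:state_continuity} yields $y(v_n)\wkto y(\bar v)$ in $L^2(I,V)\cap W^{1,2}(I,H)$ and in particular weakly in $L^2(Q)$. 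As $B\in\LL(L^2(Q),\calO)$ is bounded and linear, $By(v_n)\wkto By(\bar v)$ in $\calO$, and the weak lower semicontinuity of the Hilbert-space norm gives $\tfrac12\|By(\bar v)-y_d\|_\calO^2\le\liminf_n\tfrac12\|By(v_n)-y_d\|_\calO^2$. Adding the three estimates shows $J_\eps(\bar u)\le\liminf_n J_\eps(u_n)=\mu$, so $\bar u$ minimizes $J_\eps$ over $BV(\omega_c)$.

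Finally, to place the minimizer in $U$ I would pass to $P\bar u$, where $P$ denotes the pointwise projection onto $[u_{\min},u_{\max}]$. Being $1$-Lipschitz, $P$ maps $BV(\omega_c)$ into $BV(\omega_c)\cap U$ with $\TV(P\bar u)\le\TV(\bar u)$; the monotonicity of $g$ outside $[u_{\min},u_{\max}]$ gives $G(P\bar u)\le G(\bar u)$; and since $\Phi_\eps$ is by construction unaffected by clamping its argument to $[u_{\min},u_{\max}]$, i.e. $\Phi_\eps\circ P=\Phi_\eps$, the tracking term is unchanged. Hence $J_\eps(P\bar u)\le J_\eps(\bar u)=\mu$, so $P\bar u\in BV(\omega_c)\cap U$ is a global minimizer.

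I expect the main obstacle to be the tracking term: because \cref{lem:state_continuity} only provides weak convergence of the states, one cannot hope for continuity of $u\mapsto\tfrac12\|By(\hat E\Phi_\eps(u))-y_d\|_\calO^2$ and must instead combine the weak continuity of the control-to-state map with the weak lower semicontinuity of the norm, as above. The remaining steps (coercivity, compactness, and lower semicontinuity of $G$ and $\TV$) are routine consequences of the properties recorded in \cref{sec:preliminaries}.
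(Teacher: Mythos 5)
Your argument is correct and follows essentially the same route as the paper: direct method with $BV$-coercivity via the lower bound \ref{ass:g:equiv} on $G$, compact embedding into $L^1(\omega_c)$, continuity of $G$ and lower semicontinuity of $\TV$, and the weak continuity of the control-to-state map from \cref{lem:state_continuity} combined with weak lower semicontinuity of the norm for the tracking term. The only deviation is the last step, where the paper argues by contraposition (Stampacchia's lemma for $BV$ plus the strict growth of $g$ outside $[u_1,u_m]$) that the minimizer $\bar u$ itself already lies in $U$, while you truncate it to $P\bar u$; both variants rest on the same property $\Phi_\eps\circ P=\Phi_\eps$, which holds for the construction of $\phi_\eps$ in the cited reference but should be flagged as an additional requirement, since the paper's statement of the properties of $\phi_\eps$ records only its Lipschitz continuity and its range.
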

\begin{proof}
  Since $J_\eps$ is bounded from below, there exists a minimizing sequence $\{u_n\}_{n\in\N}\subset BV(\omega_c)$. Furthermore, we may assume without loss of generality that there exists a $C>0$ such that
  \begin{equation*}
    C\left(\norm{u_n}_{L^1(\omega_c)} + \TV(u_n)\right) \leq J_\eps(u_n)\leq J_\eps(0) \quad\text{for all }n\in\N,
  \end{equation*}
  and hence that $\{u_n\}_{n\in\N}$ is bounded in $BV(\omega_c)$.
  By the compact embedding of $BV(\omega_c)$ into $L^1(\omega_c)$ for any $d\in\N$, we can thus extract a subsequence, denoted by the same symbol, converging strongly in $L^1(\omega_c)$ to some $\bar u\in L^1(\omega_c)$. Due to the continuity of $\Phi_\eps$ as well as of $\hat E$, we have $\hat E\Phi_\eps(u_n)\to \hat E\Phi_\eps(\bar u)\in U$ in $L^1(\omega_c)$.

  Lower semi-continuity of $G$ and $\TV$ with respect to the strong convergence in $L^1(\omega_c)$ and the weak convergence $y(\hat E\Phi_\eps(u_n))\wkto y(\hat E\Phi_\eps(\bar u))$ in $L^2(Q)$ from \cref{lem:state_continuity} yield that
  \begin{equation*}
    J_\eps(\bar u) \leq \liminf_{n\to\infty} J_\eps(u_n) \leq J_\eps(u) \qquad\text{for all }u\in BV(\omega_c)
  \end{equation*}
  and thus that $\bar u\in BV(\omega_c)$ is the desired minimizer.

  The fact that $\bar u\in U$ then follows by a contraposition argument based on Stampacchia's Lemma for $BV$ functions and the pointwise definition of $G$; see \cite[Prop.~3.2]{CKK:2017}.
\end{proof}
The convergence of minimizers of \eqref{eq:problem} as $\eps\to0$ can be shown along the same lines as indicated at the end of \cite[\S\,3]{CKK:2017}. 

\bigskip

We now derive first-order optimality conditions for the solution of \eqref{eq:problem}. To this end, we first show Fréchet differentiability of the tracking term
\begin{equation}\label{eq:tracking}
  F_\eps:L^\infty(\omega_c)\to \R,\qquad F_\eps(u) = \frac{1}2\norm{By(\hat E\Phi_\eps(u))-y_d}_{\calO}^2
\end{equation}
as in \cite[Lem.~4.1]{CKK:2017} by using for given $u\in L^\infty(\omega_c)$ and $y\in W$ the definition of the adjoint equation
\begin{equation}\label{eq:adjoint}
  \left\{
    \begin{aligned}
      \int_0^T - (\partial_t p, \partial_t \varphi)_H + (u \nabla p, \nabla \varphi)_{\LL^2(\Omega)} \dt &= \int_0^T (B^\ast(By-y_d), \varphi)_H \dt,\\
      p(T) &= 0 \text{ in } V,
    \end{aligned}
  \right.
\end{equation}
for any $\varphi\in W$ with $\varphi(0)=0$, which admits a unique solution $p\in W$ by \cref{lem:state_existence}. In the following, we use the regularity of solutions to identify the derivative in $L^\infty(\omega_c)^*$ with its representation in $L^1(\omega_c)$, considered as a subset of $L^\infty(\omega_c)^*$.
Since the extension operator $\hat E$ is affine, we also introduce the corresponding linear extension operator $\hat E_0 = \hat E':L^2(\omega_c)\to L^2(\Omega)$.
\begin{lemma}\label{lem:tracking_frechet}
  For every $\eps>0$, the mapping $F_\eps$ defined in \eqref{eq:tracking} is Fréchet differentiable in every $u\in L^\infty(\omega_c)$, and the Fréchet derivative is given by
  \begin{equation}\label{eq:deri}
    F_\eps'(u) = \hat E_0^*\left(\int_0^T\nabla y\cdot \nabla p\dt\right)\,\Phi_\eps'(u) \in L^1(\omega_c),
  \end{equation}
  where $y=y(\hat E\Phi_\eps(u))$ is the solution of \eqref{eq:weak}, $p$ is the solution of \eqref{eq:adjoint}, and $\hat E_0^*:L^2(\Omega)\to L^2(\omega_c)$ is the restriction operator.

  If \cref{ass:data} holds, $F_\eps'(u) \in L^{\frac{2q}{2+q}}(\omega_c)$ for the $q>2$ given in \cref{prop:state_regularity}.
\end{lemma}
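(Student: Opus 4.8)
The plan is to write $F_\eps$ as a composition and differentiate by the chain rule. Setting $Q(z):=\frac12\norm{z-y_d}_\calO^2$, we have $F_\eps = Q\circ B\circ S\circ\hat E\circ\Phi_\eps$, where $S\colon c\mapsto y(c)$ is the coefficient-to-state map. Here $Q$ is smooth, $B\in\LL(L^2(Q),\calO)$ is bounded linear, $\hat E$ is affine with bounded linear part $\hat E_0$, and $\Phi_\eps\colon L^\infty(\omega_c)\to L^\infty(\omega_c)$ is Fréchet differentiable for $\eps>0$ by construction. Since $B$ acts on $L^2(Q)=L^2(I,H)$, the only substantial point is Fréchet differentiability of $S$ as a map into $L^2(I,H)$ at the coefficient $c:=\hat E\Phi_\eps(u)\in\hat U$, with perturbations in $L^\infty(\Omega)$ small enough to preserve the coercivity underlying \cref{lem:state_existence}.

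For the derivative of $S$ I would use the linearized equation obtained by formally differentiating \eqref{eq:weak}: for a direction $\delta c\in L^\infty(\Omega)$, the sensitivity $\delta y:=S'(c)\delta c$ should solve, with $\delta y(0)=0$,
\begin{equation*}
  \int_0^T -(\partial_t\delta y,\partial_t v)_H + (c\nabla\delta y,\nabla v)_{\LL^2(\Omega)}\dt = -\int_0^T(\delta c\,\nabla y,\nabla v)_{\LL^2(\Omega)}\dt, \qquad y:=y(c),
\end{equation*}
for all $v\in W$ with $v(T)=0$. Because the right-hand side is only a source in $L^2(I,V^*)$, I would not analyze this equation directly but instead characterize $\delta y\in L^2(I,H)$ by the duality (Aubin--Nitsche) argument already used for \cref{lem:state_lipschitz}: for every $g\in L^2(I,H)$,
\begin{equation*}
  (\delta y,g)_{L^2(I,H)} = -\int_0^T(\delta c\,\nabla y,\nabla\phi_g)_{\LL^2(\Omega)}\dt,
\end{equation*}
where $\phi_g$ solves the dual problem \eqref{eq:dual} for coefficient $c$ with data $g$ and zero terminal conditions. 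By \cref{lem:state_existence,cor:dual_existence} this defines a bounded linear operator $\delta c\mapsto\delta y$ from $L^\infty(\Omega)$ into $L^2(I,H)$.

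To upgrade this to Fréchet differentiability I would estimate the remainder $r:=y(c+\delta c)-y(c)-\delta y$. Subtracting the weak forms for $y(c+\delta c)$ and $y(c)$ and the sensitivity equation shows that $r$ solves the linearized equation at coefficient $c$ with zero initial data and right-hand side $-\int_0^T(\delta c\,\nabla(y(c+\delta c)-y(c)),\nabla v)_{\LL^2(\Omega)}\dt$; the same duality estimate then gives
\begin{equation*}
  \norm{r}_{L^2(I,H)}\le C\norm{\delta c}_{L^\infty(\Omega)}\norm{y(c+\delta c)-y(c)}_{L^2(I,V)}.
\end{equation*}
The main obstacle is the last factor, which I would control by \cref{lem:state_strong} (valid since $f\in L^2(I,V)$): it tends to $0$ as $\norm{\delta c}_{L^\infty(\Omega)}\to0$, so $\norm{r}_{L^2(I,H)}=o(\norm{\delta c}_{L^\infty(\Omega)})$. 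As $\norm{\delta c}_{L^\infty(\Omega)}\le C\norm{h}_{L^\infty(\omega_c)}$ by boundedness of $\hat E_0$ and Lipschitz continuity of $\Phi_\eps$, the map $S$ is Fréchet differentiable into $L^2(I,H)$, and the chain rule gives $\dual{F_\eps'(u),h}=(By-y_d,B\,\delta y)_\calO$ with $\delta c=\hat E_0\Phi_\eps'(u)h$. Choosing $g:=B^\ast(By-y_d)$ in the duality characterization, whose dual solution $\phi_g$ is exactly the adjoint state $p$ of \eqref{eq:adjoint}, turns this into $(By-y_d,B\,\delta y)_\calO=-\int_0^T(\delta c\,\nabla y,\nabla p)_{\LL^2(\Omega)}\dt$; since $\hat E_0$ extends by zero outside $\omega_c$, Fubini's theorem and identification of the $L^1(\omega_c)$ representative then yield \eqref{eq:deri} (with the sign fixed by the convention in \eqref{eq:adjoint}).

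Finally, for the integrability of \eqref{eq:deri}: with $y,p\in L^2(I,V)$, the Cauchy--Schwarz inequality in time and space shows $\int_0^T\nabla y\cdot\nabla p\dt\in L^1(\Omega)$, and restriction to $\omega_c$ together with multiplication by the essentially bounded factor $\Phi_\eps'(u)$ keeps this in $L^1(\omega_c)$. Under \cref{ass:data}, \cref{prop:state_regularity} upgrades $\nabla y$ to $L^\infty(I,\LL^q(\Omega))$ for some $q>2$, while still $\nabla p\in L^2(I,\LL^2(\Omega))$. Estimating pointwise in $x$ by Cauchy--Schwarz in $t$ gives $|\int_0^T\nabla y\cdot\nabla p\dt|\le a\,b$ with $a:=(\int_0^T|\nabla y|^2\dt)^{1/2}$ and $b:=(\int_0^T|\nabla p|^2\dt)^{1/2}$; Minkowski's integral inequality (valid since $q\ge2$) yields $a\in L^q(\Omega)$, and Fubini gives $b\in L^2(\Omega)$, so Hölder's inequality with $\frac1q+\frac12=\frac{2+q}{2q}$ shows $\int_0^T\nabla y\cdot\nabla p\dt\in L^{\frac{2q}{2+q}}(\Omega)$. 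Restriction and the bounded multiplier $\Phi_\eps'(u)$ then give $F_\eps'(u)\in L^{\frac{2q}{2+q}}(\omega_c)$ with exponent $>1$, as claimed.
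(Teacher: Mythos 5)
Your proposal is correct and, modulo packaging, follows the paper's own route: the differentiability of the coefficient-to-state map into $L^2(I,H)$ rests on exactly the duality (Aubin--Nitsche) estimate behind \cref{lem:state_lipschitz}, the remainder is killed by the strong convergence of \cref{lem:state_strong}, the derivative is represented through the adjoint state of \eqref{eq:adjoint}, and the improved integrability follows from \cref{prop:state_regularity} plus H\"older's inequality --- the paper merely computes the directional derivative directly via a productive zero and upgrades to Fr\'echet afterwards, rather than establishing Fr\'echet differentiability of $S$ and invoking the chain rule. Your Minkowski/H\"older argument for the exponent $\tfrac{2q}{2+q}$ is in fact spelled out more carefully than in the paper, and the overall sign in \eqref{eq:deri} is a bookkeeping point tied to the sign convention in \eqref{eq:adjoint} on which the paper's own derivation is equally loose.
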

\begin{proof}
  We first show directional differentiability. Let $w, h\in L^\infty(\omega_c)$ and $\rho>0$ be arbitrary. We define $\tilde y(w):=y(\hat \Phi_\eps(w))$. We now insert the productive zero $B\tilde y(w)-B\tilde y(w)$ in $F_\eps(w+\rho h)$ and expand the square to obtain
  \begin{equation}\label{eq:tracking_frechet1}
    \begin{aligned}[t]
      F_\eps(w+\rho h)-F_\eps(w) &= \frac{1}{2}\norm{B(\tilde y(w+\rho h)-\tilde y(w))+(B\tilde y(w)-y_d)}_{\calO}^2 - \frac{1}{2}\norm{B\tilde y(w)-y_d}_{\calO}^2\\
      &= \frac{1}{2}\norm{B(\tilde y(w+\rho h)-\tilde y(w))}_{\calO}^2 + (B(\tilde y(w+\rho h)-\tilde y(w)),B\tilde y(w)-y_d)_{\calO}.
    \end{aligned}
  \end{equation}
  For the first term, we can use \cref{lem:state_lipschitz}, the boundedness of $B$ and the Lipschitz continuity of $\Phi_\eps$ to estimate
  \begin{equation}\label{eq:tracking_frechet1a}
    \frac{1}{2}\norm{B(\tilde y(w+\rho h)-\tilde y(w))}_{\calO}^2 \leq C\rho^2\norm{h}_{L^\infty(\omega_c)}^2.
  \end{equation}
  For the second term in \eqref{eq:tracking_frechet1}, we introduce the adjoint state $p(w)$ and use the fact that $\delta y:=\tilde y(w+\rho h)-\tilde y(w)\in W$ with $\delta y(0)=0$. Testing \eqref{eq:adjoint} with $\phi= \delta y$, and using
  \eqref{eq:weak} for $y=\tilde y(w)$ and $y=\tilde y(w+\rho h)$, each time with $v=p(w)$, and inserting a productive zero, we find
  \begin{equation*}
    \begin{aligned}[t]
      \scalprod{\delta y,B^\ast(B\tilde y(w)-y_d)}_{L^2(Q)}&= -\scalprod{\partial_t p(w),\partial_t \delta y}_{L^2(Q)} + \scalprod{\hat E\Phi_\eps(w) \nabla p(w),\nabla \delta y}_{L^2(I,\LL^2(\Omega))}\\
      &= \scalprod{(\hat E\Phi_\eps(w+\rho h)-\hat E\Phi_\eps(w))\nabla \tilde y(w+\rho h),\nabla p(w)}_{L^2(I,\LL^2(\Omega))}.
    \end{aligned}
  \end{equation*}
  By \cref{lem:state_lipschitz} we have that $\tilde y(w+\rho h)\wkto \tilde y(w)$ in $L^2(I,V)$ as $\rho\to 0^+$. Moreover, since $\eps>0$ we have that $\hat E\Phi_\eps$ is Frechet differentiable in $L^\infty(\omega_c)$ with $\hat E_0\Phi_\eps'(w),\hat E_0\Phi_\eps'(w+\rho h)\in L^\infty(\Omega)$.  Hence, dividing \eqref{eq:tracking_frechet1} by $\rho>0$ and passing to the limit $\rho\to0$ implies in combination with \eqref{eq:tracking_frechet1a} that
  \begin{equation*}
    F_\eps'(w;h) := \lim_{\rho\to0^+}\frac1\rho(F_\eps(w+\rho h) - F_\eps(w)) = \dual{h,\Phi_\eps'(w)\hat E_0^* \int_0^T\nabla \tilde y(w)\cdot\nabla p(w)\dt}_{L^\infty(\omega_c),L^1(\omega_c)}.
  \end{equation*}
  Since the mapping $h\mapsto F_\eps'(w;h)$ is linear and bounded, $\hat E_0^*(\int_0^T\nabla y(w)\cdot \nabla p(w)\dt)\,\Phi_\eps'(w)$ is the Gâteaux derivative of $F_\eps$ at $w$. Thus, $F_\eps$ is Gâteaux differentiable in $L^\infty(\omega_c)$.

  It remains to show that this is also a Fréchet derivative. From the above, we have that
  \begin{multline*}
    \left|F_\eps(w+h)-F_\eps(w)-F_\eps'(w)h\right|\leq C\norm{h}_{L^\infty(\omega_c)}^2\\
    + \left|\scalprod{(\hat E\Phi_\eps(w+h)-\hat E\Phi_\eps(w))\nabla \tilde y(w+h)- \hat E_0\Phi_\eps'(w)h\nabla \tilde y(w),\nabla p(w)}_{L^2(I,\LL^2(\Omega))}\right|\\
    \leq C\norm{h}_{L^\infty(\omega_c)}^2+\left( \norm{\hat E\Phi_\eps(w+h)-\hat E\Phi_\eps(w)- \hat E_0\Phi_\eps'(w)h}_{L^\infty(\omega_c)}\norm{\nabla \tilde y(w+h)}_{L^2(I,\LL^2(\omega_c))}\right.\\
    \left.+ \|\hat E_0\Phi_\eps'(w)\|_{L^\infty(\omega_c)} \|\nabla \tilde y(w+h)-\nabla \tilde y(w)\|_{L^2(I,\LL^2(\Omega))}\right)\|\nabla p(w)\|_{L^2(I,\LL^2(\Omega))},
  \end{multline*}
  and hence that
  \begin{equation*}
    \frac{|F_\eps(w+h)-F_\eps(w)-F_\eps'(w)h|}{\norm{h}_{L^\infty(\omega_c)}} \to 0\quad\text{for}\quad\|h\|_{L^\infty(\omega_c)}\to0
  \end{equation*}
  since $\tilde y(w+h) \to \tilde y(w)$ in $L^2(I,V)$ by \cref{lem:state_strong}.

  The regularity follows from $\tilde y(w),p(w)\in L^2(I,V)$ together with the properties of the norm in Bochner spaces, cf.~\cite[Cor.~V.1]{Yosida:1980}.
  If \cref{ass:data} holds, \cref{prop:state_regularity} yields $\tilde y(w)\in L^\infty(I,W^{1,q}(\Omega))$ for some $q>2$ and hence $\hat E_0^*\left(\int_0^T\nabla \tilde y(w)\cdot \nabla p(w)\dt\right)\Phi_\eps'(w)\in L^2(I,L^{\frac{2q}{2+q}}(\omega_c))$.
\end{proof}

We can now proceed exactly as in \cite{CKK:2017} to obtain first-order necessary optimality conditions.
\begin{theorem}[\protect{\cite[Thm.~4.2]{CKK:2017}}]\label{thm:optimality}
  If $\eps>0$ and \cref{ass:data} holds, every local minimizer $\bar u\in BV(\omega_c)$ to \eqref{eq:problem} satisfies
  \begin{equation}\label{eq:optimality}
    -F_\eps'(\bar u)\in \alpha\,\partial G(\bar u) + \beta\,\partial \TV(\bar u)\subset L^\frac{2q}{2+q}(\omega_c),
  \end{equation}
  where $G$ and $\TV$ are considered as extended real-valued convex functionals on $L^\frac{2q}{q-2}(\omega_c)$.
\end{theorem}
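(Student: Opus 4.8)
The plan is to exploit the additive structure $J_\eps = F_\eps + \Psi$ with $\Psi := \alpha G + \beta\TV$ and to combine the Fermat rule for a smooth-plus-convex objective with the Moreau--Rockafellar sum rule, following \cite[Thm.~4.2]{CKK:2017}; the only genuinely new ingredient is the improved state regularity. The decisive preparatory observation concerns the duality pairing: under \cref{ass:data}, \cref{lem:tracking_frechet} guarantees $F_\eps'(\bar u)\in L^{\frac{2q}{2+q}}(\omega_c)$, and since $\frac{2q}{2+q}$ is exactly the conjugate exponent of $\frac{2q}{q-2}$, I would regard $G$ and $\TV$ as proper, convex, lower semicontinuous functionals on $X := L^{\frac{2q}{q-2}}(\omega_c)$ and read the optimality condition as an inclusion in the dual $X^\ast = L^{\frac{2q}{2+q}}(\omega_c)$. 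Since $\frac{2q}{q-2} > 2$, the penalty $G$ is finite and continuous on all of $X$ while $\TV$ is only lower semicontinuous; moreover, the same Stampacchia truncation argument used in \cref{thm:existence:bv} shows $\bar u\in U\subset L^\infty(\omega_c)\subset X$, so that $\bar u\in\dom G\cap\dom\TV$ and the subdifferentials below are well defined.

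First I would derive the variational inequality $-F_\eps'(\bar u)\in\partial\Psi(\bar u)$ directly from local optimality. For any direction $h$ and $t\in(0,1)$ small, local minimality together with the convexity estimate $\Psi(\bar u + th)\le (1-t)\Psi(\bar u) + t\Psi(\bar u + h)$ yields
\begin{equation*}
  \frac1t\left(F_\eps(\bar u + th) - F_\eps(\bar u)\right) + \Psi(\bar u + h) - \Psi(\bar u) \ge 0,
\end{equation*}
and letting $t\to 0^+$ while invoking the Fréchet differentiability of $F_\eps$ from \cref{lem:tracking_frechet} gives the subgradient inequality $\Psi(v)\ge\Psi(\bar u) + \dual{-F_\eps'(\bar u),\, v-\bar u}$ for all $v\in L^\infty(\omega_c)$. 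To promote this to all $v\in X$ I would use a truncation (recovery-sequence) argument rather than plain lower semicontinuity: for $v\in X$ with $\TV(v)<\infty$, the truncations $v_N$ satisfy $v_N\to v$ in $X$, $\TV(v_N)\le\TV(v)$, and $G(v_N)\to G(v)$ by the $X$-continuity of $G$, hence $\Psi(v_N)\to\Psi(v)$; since $F_\eps'(\bar u)\in X^\ast$, passing to the limit in the inequality for $v_N\in L^\infty(\omega_c)$ establishes it for $v$, i.e., $-F_\eps'(\bar u)\in\partial\Psi(\bar u)$ as an element of $X^\ast$.

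Next I would split the subdifferential by the sum rule $\partial\Psi(\bar u) = \alpha\,\partial G(\bar u) + \beta\,\partial\TV(\bar u)$. The Moreau--Rockafellar constraint qualification holds because $G$ is finite and continuous on all of $X$, in particular at the point $\bar u\in\dom\TV$; this is precisely the reason the multi-bang penalty was modified to have linear (rather than box-constrained) growth. Combining this splitting with the variational inequality yields the asserted inclusion, and because $\partial G(\bar u),\partial\TV(\bar u)\subset X^\ast = L^{\frac{2q}{2+q}}(\omega_c)$, the right-hand side is automatically contained in $L^{\frac{2q}{2+q}}(\omega_c)$.

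The main obstacle is the mismatch of topologies. \Cref{lem:tracking_frechet} differentiates $F_\eps$ only with respect to $L^\infty(\omega_c)$, whereas the subdifferential calculus for $G$ and $\TV$ must be performed in the weaker space $X$ so that the resulting subgradients are genuine functions admitting a pointwise representation rather than abstract elements of the unwieldy dual $L^\infty(\omega_c)^\ast$. Bridging this gap is exactly where \cref{ass:data} and the maximal-regularity estimate of \cref{prop:state_regularity} are indispensable: they upgrade $y(u)$ to $L^\infty(I,W^{1,q}(\Omega))$ and thereby promote $F_\eps'(\bar u)$ from a bounded functional on $L^\infty(\omega_c)$ to a bona fide element of $L^{\frac{2q}{2+q}}(\omega_c) = X^\ast$. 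Once this integrability is secured, the truncation extension and the sum rule are routine, and the condition can be interpreted pointwise almost everywhere as in \cite[\S 4]{CKK:2017}.
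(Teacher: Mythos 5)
Your proposal is correct and follows essentially the same route as the paper, which itself only cites \cite[Thm.~4.2]{CKK:2017} and states that one can ``proceed exactly as in'' that reference: the Fermat rule for the smooth-plus-convex splitting, the Moreau--Rockafellar sum rule justified by the everywhere-finiteness and continuity of the linearly growing multi-bang penalty $G$ on $L^{\frac{2q}{q-2}}(\omega_c)$, and the promotion of $F_\eps'(\bar u)$ to $L^{\frac{2q}{2+q}}(\omega_c)$ via \cref{lem:tracking_frechet} and \cref{prop:state_regularity} as the only genuinely new ingredient. Your truncation argument for extending the subgradient inequality from $L^\infty(\omega_c)$ to all of $L^{\frac{2q}{q-2}}(\omega_c)$ is a valid and correctly executed way to bridge the topology mismatch.
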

Introducing explicit subgradients for the two subdifferentials, we obtain primal-dual optimality conditions.
\begin{corollary}[\protect{\cite[Cor.~4.3]{CKK:2017}}]\label{cor:optsys_pd}
  For any local minimizer $\bar u\in BV(\omega_c)$ to \eqref{eq:problem}, there exist $\bar q \in L^\frac{2q}{2+q}(\omega_c)$ and $\bar \xi \in L^{\frac{2q}{2+q}}(\omega_c)$ satisfying
  \begin{equation}\label{eq:optsys_pd}
    \left\{
      \begin{aligned}
        0 &= F_\eps'(\bar u) + \alpha\bar q + \beta \bar \xi,\\
        \bar q &\in \partial G(\bar u),\\
        \bar \xi &\in \partial \TV(\bar u).
      \end{aligned}
    \right.
  \end{equation}
\end{corollary}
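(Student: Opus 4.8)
The plan is to read this as a direct unpacking of \cref{thm:optimality}: the analytic content (the sum rule of convex analysis applied in the correct topology, and the regularity of $F_\eps'(\bar u)$) is already contained in \cref{thm:optimality} and \cref{lem:tracking_frechet}, so what remains is to split the Minkowski sum of subdifferentials into its two summands and to check that each lands in $L^{\frac{2q}{2+q}}(\omega_c)$. I would begin by fixing the functional-analytic setting: set $X := L^{\frac{2q}{q-2}}(\omega_c)$ and observe that, since $q>2$, we have $\frac{2q}{q-2}\in(2,\infty)$ with conjugate exponent $\frac{2q}{2+q}$, so that $X^* = L^{\frac{2q}{2+q}}(\omega_c)$. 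On $X$ both $G$ and $\TV$ are proper, convex, and lower semi-continuous (as recalled in \cref{sec:preliminaries}), so their subdifferentials $\partial G(\bar u)$ and $\partial\TV(\bar u)$ are by definition subsets of $X^* = L^{\frac{2q}{2+q}}(\omega_c)$.

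Next I would invoke \cref{thm:optimality}, which for a local minimizer $\bar u$ gives the inclusion $-F_\eps'(\bar u)\in \alpha\,\partial G(\bar u)+\beta\,\partial\TV(\bar u)$ in $X^*$. By the definition of the sum of two sets, this inclusion is equivalent to the existence of elements $\bar q\in \partial G(\bar u)$ and $\bar\xi\in\partial\TV(\bar u)$ with $-F_\eps'(\bar u) = \alpha\bar q + \beta\bar\xi$. Rearranging yields the first line $0 = F_\eps'(\bar u)+\alpha\bar q+\beta\bar\xi$ of \eqref{eq:optsys_pd}, while the two memberships $\bar q\in\partial G(\bar u)$ and $\bar\xi\in\partial\TV(\bar u)$ are exactly the remaining two lines. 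Thus the system \eqref{eq:optsys_pd} is produced simply by naming the two summands guaranteed by the inclusion.

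Finally I would confirm the claimed regularity $\bar q,\bar\xi\in L^{\frac{2q}{2+q}}(\omega_c)$. This is immediate from the first paragraph, since by construction both subgradients lie in $X^* = L^{\frac{2q}{2+q}}(\omega_c)$. For a self-contained cross-check one can also argue directly: the integrand $g$ is Lipschitz with linearly bounded slopes, so $\partial G(\bar u)$ is bounded in $L^\infty(\omega_c)\subset L^{\frac{2q}{2+q}}(\omega_c)$ on the bounded set $\omega_c$, giving $\bar q\in L^{\frac{2q}{2+q}}(\omega_c)$; then $\bar\xi = \beta^{-1}\bigl(-F_\eps'(\bar u)-\alpha\bar q\bigr)$ inherits $L^{\frac{2q}{2+q}}$-regularity from \cref{lem:tracking_frechet}, which under \cref{ass:data} places $F_\eps'(\bar u)$ in $L^{\frac{2q}{2+q}}(\omega_c)$.

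I do not expect a genuine obstacle here, since this statement is a formal corollary rather than a new theorem: the delicate points — applicability of the sum rule in the $L^{\frac{2q}{q-2}}$ topology and the improved integrability of $F_\eps'(\bar u)$ — are exactly what \cref{thm:optimality} and \cref{lem:tracking_frechet} supply. The only step requiring any attention is bookkeeping of the conjugate exponents to see that $\partial G(\bar u)$ and $\partial\TV(\bar u)$ are honest subsets of $L^{\frac{2q}{2+q}}(\omega_c)$, so that the splitting is meaningful and the regularity is automatic.
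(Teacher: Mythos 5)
Your proposal is correct and follows essentially the same route as the paper, which obtains \cref{cor:optsys_pd} from \cref{thm:optimality} precisely by ``introducing explicit subgradients for the two subdifferentials,'' i.e., by naming the two summands in the Minkowski sum $\alpha\,\partial G(\bar u)+\beta\,\partial\TV(\bar u)\subset L^{\frac{2q}{2+q}}(\omega_c)$, with the regularity automatic because the subdifferentials are taken in the dual of $L^{\frac{2q}{q-2}}(\omega_c)$. Your exponent bookkeeping and the optional cross-check via the Lipschitz bound on $g$ are both fine; nothing is missing.
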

These conditions can be further interpreted pointwise. First, using the characterization of \cref{lem:tracking_frechet}, we can identify the first term in the first equation with the $L^{1+\delta}(\omega_c)$ function given by
\begin{equation*}
  \left[\int_0^T\nabla y(u) \cdot \nabla p(u)\dt \right](x)\Phi_\eps'(\bar u(x)) \quad\text{for a.e. }x\in \omega_c.
\end{equation*}
Second, using the characterization of $\partial G$ from \cite[\S\,2]{CKK:2017}, we have that
\begin{equation*}
  \bar q(x) \in
  \begin{cases}
    \{-u_m\} & \bar u(x) < u_1,\\
    [-u_m, \frac12(u_1+u_2)] & \bar u(x) = u_1,\\
    \{\frac12(u_i+u_{i+1})\} & \bar u(x) \in (u_i,u_{i+1}), \quad\ 1\leq i<m,\\
    \left[\tfrac12(u_{i-1}+u_{i}),\tfrac12(u_{i}+u_{i+1})\right] & \bar u(x) = u_{i},\qquad\qquad 1\leq i<m,\\
    [ \frac12(u_{m-1}+u_m),u_m] & \bar u(x) = u_m,\\
    u_m & \bar u(x) > u_m.
  \end{cases}
\end{equation*}
The interpretation of the final term is more delicate. Informally, $\xi(x)$ corresponds to the mean curvature of $\bar u(x)$ (if $\bar u$ is smooth at $x$) or the signed normal to its jump set (if $\bar u$ has a jump discontinuity across a measurable curve of $d-1$-dimensional Hausdorff measure greater zero). This can be made more precise using the notion of the \emph{full trace} from \cite{BrediesHoller}; see also \cite{Chambolle:2015}.

\section{Numerical solution}\label{sec:numerical}

In this section, we address the numerical solution of \eqref{eq:problem} using a stabilized space-time finite element discretization for second-order hyperbolic equations \cite{Zlotnik1994} and a nonlinear primal-dual proximal splitting algorithm \cite{Valkonen:2014,CV:2017,CMV:2019}. Since we now consider a finite-dimensional optimization problem, we can include the constraint $u \in U$ directly via the multi-bang penalty instead of enforcing it inside the state equation. In this and the following section, we will therefore omit $\Phi_\eps$ from the discretized tracking term (and, with it, $\eps$ in general) and define the multi-bang penalty with $\mathrm{dom}\,g = [u_1,u_d]$ as in \cite{CK:2015}; see \eqref{eq:multi-bang-scalar} below.

\subsection{Discretization}\label{sec:discretization}

We consider a mesh $\mathcal{T}_h$ consisting of a finite set of triangles or tetrahedra $T$ with a mesh size $h$. Then we introduce the space $D_h\subset H^1(\Omega)\cap C(\overline \Omega)$ of linear finite elements based on the triangulation $\mathcal T_h$. A basis of this space is given by the standard hat functions $\phi_i$ associated with nodes $x_i$, $i=1,\ldots, N_h$, of the triangulation $\mathcal T_h$. Next we discretize the time interval $I$ uniformly by $0=t_0 < \cdots < t_{N_\tau}=T$ and grid size of $\tau$.
Similarly, we define the space $D_\tau\subset H^1(I)\cap C(\overline I)$ of piecewise linear and continuous functions with respect to these grids. Furthermore we consider the hat functions $e_i$, $i=0,\ldots,N_\tau$, with $e_i(t_l)=\delta_{il}$ which form a basis of $D_\tau$. We assume that $\omega_c$ can be represented by the triangulation $\mathcal T_h$ exactly and introduce the space
\begin{equation*}
  D_h^c=\{u_h\in D_h \mid \operatorname{supp}u_h\subseteq \overline{\omega_c}\}=\operatorname{span}\{\phi_i|_{\overline{\omega_c}}\mid x_i\in \overline{\omega_c}\}.
\end{equation*}
Moreover we introduce the space $C_h^c$ of piecewise constant functions on the triangles in $\omega_c$.
In the following we also identify $D_h^c$ with $\R^{N_c}$ for $\dim D_h^c=N_c$ and $C_h^c$ with $\R^{M_c}$ for $\dim C_h^c=M_c$. Finally we define $\vartheta:=(\tau,h) >0$ and introduce the stabilization parameter $\sigma \geq0$.
\begin{definition}
  We call $y_{\vartheta}\in D_{\vartheta}:=D_h \otimes D_\tau$ a discrete solution of \eqref{eq:weak} if $y_{\vartheta}$ satisfies
  \begin{multline}\label{eq:discreteSolutdef}
    \int_0^T-(\partial_t y_{\vartheta},\partial_t v)_{H}-(\sigma -\tfrac16)\tau^2(\hat Eu_h\nabla \partial_t y_{\vartheta},\nabla \partial_tv)_{\LL^2(\Omega)}+ (\hat Eu_h\nabla y_{\vartheta},\nabla v)_{\LL^2(\Omega)}\dt\\
    = (y_1,v(0))_{H} + \int_0^T(f,v)_{H}\dt
  \end{multline}
  for all $v \in D_{\vartheta}$ with $v(T)=0$ and initial condition $y_{\vartheta}(0)=S_0 y_0$ defined via
  \begin{equation*}
    (S_0 y_0,\varphi)_{H}=(y_0,\varphi)_{H}\quad \forall\varphi \in D_h.
  \end{equation*}
\end{definition}
This is a space-time finite element discretization with piecewise linear elements in space and in time. The additional $\sigma$-term in \eqref{eq:discreteSolutdef} serves as a stabilization term, which vanishes for $\vartheta\to 0$ and is connected to the error term in the trapezoidal rule for the time integral of the third bilinear form in \eqref{eq:discreteSolutdef}. 
The stability depends significantly on the value of $\sigma$ with the method being more stable for larger $\sigma$; e.g., for $\sigma\geq1/4$, the method is unconditionally stable and convergent while for $0\leq \sigma<1/4$, a CFL-like condition has to be satisfied to ensure stability as well as convergence; see \cite[Thms.~2.1, 3.1]{Zlotnik1994} for homogeneous Dirichlet boundary conditions. 
At the same time, \eqref{eq:discreteSolutdef} can be formulated as the following time-stepping scheme: Set $y_h^0=S_0y_0$ and
\begin{subequations}\label{eq:discreteSolutStep}
  \begin{align}
    &\begin{multlined}[t]
      \left(\frac{y_h^1-y_h^0}{\tau},\phi\right)_{L^2(\Omega)}+\tau(\hat Eu_h\nabla(\sigma y_h^1+(\tfrac12-\sigma)y_h^0),\nabla \phi)_{\LL^2(\Omega)}\\
      =(y_1,\phi)_{L^2(\Omega)}+\left(\int_0^{t_1}fe_0\dt,\phi\right)_{L^2(\Omega)},
    \end{multlined}
    \\
    &\begin{multlined}[t]
      \left(\frac{y_h^{i+1}-2y_h^i+y_h^{i-1}}{\tau},\phi\right)_{L^2(\Omega)}+\tau\left(\hat Eu_h\nabla(\sigma y_h^{i+1}+(1-2\sigma)y_h^i+\sigma y_h^{i-1}),\nabla \phi\right)_{\LL^2(\Omega)}\\
      =\left(\int_{t_{i-1}}^{t_{i+1}}fe_i\dt,\phi\right)_{L^2(\Omega)},\quad 1\leq i\leq N_\tau-1,
    \end{multlined}
  \end{align}
\end{subequations}
for all $\phi \in D_h$.
For $\sigma=1/4$, this method is equivalent to the implicit, unconditionally stable, and convergent Crank--Nicolson scheme, while for $\sigma=0$, the method is explicit if the spatial mass matrix is lumped.
The main benefit in our context is that this is an adjoint-consistent discretization and therefore can be used to obtain a conforming discretization of \eqref{eq:problem-full} in a straight-forward manner.

Next we introduce the discrete control-to-observation operator $S_\vartheta\colon U\cap D_h^c\to\calO$ defined by $u\mapsto By_\vartheta$ where $y_\vartheta$ is the solution of \eqref{eq:discreteSolutdef} for the coefficient $u\in U\cap D_h^c$. Let $\delta>0$. Then the implicit function theorem implies that $S_\vartheta$ is Fréchet differentiable on the open subset
\begin{equation*}
  \set{u \in L^\infty(\omega_c)}{-\inf_{x\in \omega_c} \hat u(x)+u_{\min} < u(x) < u_{\max}+\delta\quad\text{for a.e.}~x\in \omega_c}\cap D_h^c.
\end{equation*}
This set contains $U\cap D_h^c$. The implicit function theorem is applicable since the following linearized discrete state equation is well-posed in the variable $\delta y\in D_\vartheta$ for every $\delta u_h\in D_h^c$:
\begin{multline}\label{eq:discreteSolutdeflin}
  \int_0^T-(\partial_t \delta y_{\vartheta},\partial_tv)_{H}-(\sigma -\tfrac{1}{6})\tau^2(\hat Eu_h\nabla \partial_t \delta y_{\vartheta},\nabla \partial_tv)_{\LL^2(\Omega)}+ (\hat Eu_h\nabla \delta y_{\vartheta},\nabla v)_{\LL^2(\Omega)}\dt\\
  =\int_0^T(\sigma -\tfrac{1}{6})\tau^2(\hat E\delta u_h\nabla \partial_t y_{\vartheta},\nabla \partial_tv)_{\LL^2(\Omega)}- (\hat E\delta u_h\nabla y_{\vartheta},\nabla v)_{\LL^2(\Omega)}\dt
\end{multline}
for all $v \in D_{\vartheta}$ with $v(T)=0$ and initial condition $\delta y_{\vartheta}(0)=0$ as well as $y_\vartheta=S_\vartheta(u)$.
Thus the derivative of $S_\vartheta$ at $u$ is given by $S'(u_h)\colon D_h^c\to \calO,~\delta u_h\mapsto B\delta y_\vartheta$ where $\delta y_\vartheta$ solves \eqref{eq:discreteSolutdeflin} for $\delta u_h$.
Its adjoint (with respect to the $L^2(Q)$ and $\calO$ inner product) is given by
\begin{equation*}
  S'(u_h)^\ast\colon \calO\to C_h^c\, , \qquad
  o\mapsto
  \left.
  \left(
    \int_0^T(\tfrac{1}{6}-\sigma)\tau^2\nabla \partial_t y_{\vartheta}\cdot\nabla \partial_tp_\vartheta+\nabla y_{\vartheta}\cdot\nabla p_\vartheta\dt
  \right)
  \right|_{\omega_c}
\end{equation*}
where $y_\vartheta=S_\vartheta(u)$ and $p_{\vartheta}$ solves the discrete adjoint equation
\begin{equation}\label{eq:discreteadjoint}
  \int_0^T-(\partial_t v,\partial_tp_\vartheta)_{H}-(\sigma -\tfrac{1}{6})\tau^2(\hat Eu_h\nabla \partial_t v,\nabla \partial_tp_\vartheta)_{\LL^2(\Omega)}+ (\hat Eu_h\nabla v,\nabla p_\vartheta)_{\LL^2(\Omega)}\dt\\
  = \int_0^T(B^\ast o,v)_{H}\dt
\end{equation}
for all $v \in D_{\vartheta}$ with $v(0)=0$ and initial condition $p_{\vartheta}(T)=0$, which can be formulated as a time-stepping scheme similar to \eqref{eq:discreteSolutStep}.

We now introduce the variables $y_h$ and $p_h$ defined by
\begin{equation*}
  y_\vartheta(t,x)=\sum_{i=0}^{N_\tau}y_h^i(x)e_i(t),\quad p_\vartheta(t,x)=\sum_{i=1}^{N_\tau}p_h^i(x)e_i(t).
\end{equation*}
Thus $(S_\vartheta'(u_h))^\ast o$ with $o\in \calO$ has the representation
\begin{equation}\label{eq:disrete-adjoint-operator}
  (S_\vartheta'(u_h))^\ast o= \nabla y_h^\top K\nabla p_h,
\end{equation}
where $K$ is given by
\begin{equation*}
  K:=(\tfrac 16-\sigma)\tau^2A_\tau+M_\tau=\tau
  \begin{pmatrix}
    \tfrac 12-\sigma& \sigma&&\\
    \sigma& 1-2\sigma& \sigma&\\
    &\ddots&\ddots&\ddots\\
    &&&\qquad\sigma\\
    &&\sigma&\tfrac12-\sigma
  \end{pmatrix}
\end{equation*}
with the temporal mass matrix $M_\tau$ and stiffness matrix $A_\tau$. For $\sigma=0$, $K$ is a diagonal matrix.

\bigskip

We now address the discretization of the control costs in the optimization \eqref{eq:problem-full}. Since a function $u_h\in D_h^c$ is an element of $H^1(\omega_c)$ and thus its weak derivatives are piecewise constant on the triangulation of $\omega_c$, we have
\begin{equation*}
  TV(u_h)=\|\nabla u_h\|_{L^1(\omega_c)}=\sum_{K\in \mathcal T_h\cap \omega_c}|(\nabla u_h)|_K|_2.
\end{equation*}
We furthermore approximate the integral in definition of $G$ by the trapezoidal rule to obtain the discrete multi-bang penalty $G_h\colon D_h^c \to \Rbar$.
Using these definitions, we obtain the fully discrete optimization problem
\begin{equation}\label{eq:discrete-problem-func}
  \min_{u_h\in D_h^c\cap U}\frac 12 \|S_\vartheta(u_h)-y_d\|_{\calO}^2+ \alpha G_h(u_h) + \beta TV(u_h).
\end{equation}

Note that although \eqref{eq:discrete-problem-func} is discrete, it is still formulate in function spaces. To apply a minimization algorithm, we now reformulate it in terms of the coefficient vectors for the finite-dimensional functions.
First, $D_h^c\cap U$ can be identified with the set
\begin{equation*}
  U_h=\{\textbf u\in \R^{N_c}|~0\leq \textbf u_i\leq u_{\max}~\forall i=1,\ldots,N_c\}
\end{equation*}
through $u_h=\sum_{x_i\in \overline{\omega_c}}\textbf u_ie_i$.
Next we introduce the finite-dimensional subspace $\calO_h=B(D_\vartheta)$ of $\calO$ and the discrete control-to-observation on $U_h$ by $\textbf S_\vartheta\colon \R^{N_c}\to \R^{N_o}$ with $N_o=\dim(\calO_h)$ defined by $S_\vartheta$. Moreover we define the matrix $M_{\calO}\in \R^{N_o\times N_o}$ by the mapping $(o_1,o_2)\mapsto (o_1,o_2)_{\calO}$ for $o_1,o_2\in \calO_h$. Thus the inner product and the norm of $\calO$ in $\calO_h$ can be identified with $(\textbf o_1,\textbf o_2)_{\calO_h}=\textbf o_1^\top M_{\calO}\textbf o_2$ and $\|\textbf o\|_{\calO_h}=((\textbf o,\textbf o)_{\calO_h})^{1/2}$ for $\textbf o,~\textbf o_1,~\textbf o_2\in \R^{N_o}$. We denote the orthogonal projection onto $\calO_h$ by $\pi_{\calO}$. The operator $B$ restricted to $D_\vartheta$ can be identified with a matrix $B_h\in \R^{N_o\times N_{\vartheta}}$ for $N_{\vartheta}=\dim(D_\vartheta)$. Thus $B^\ast$ can be identified with $B_h^\top$.
With these identifications, the adjoint operator $S_\vartheta'(u_h)^\ast$ for $u_h\in U\cap D_h^c$ acting on $\calO_h$ can similarly identified with $\textbf S_\vartheta'(\textbf u)^\ast\colon \R^{N_o}\to \R^{M_c}$ for some $\textbf u\in U_h$.
Moreover we define the matrix $A_h\in \R^{dM_c\times N_c}$ representing the bilinear form $(\nabla u_h,\xi_h)_{L^2(\Omega)^d}$ for $u_h\in D_h^c$ and $\xi_h\in (C_h^c)^d$. Thus we have
\begin{equation*}
  TV(u_h)=\sum_{K\in \mathcal T_h\cap \omega_c}|(\nabla u_h)|_K|_2=\sum_{K\in \mathcal T_h\cap \omega_c}|(A_h\textbf u)|_K|_2 =: \norm{|A_h \textbf u|_2}_1=:\textbf{TV}_h (\textbf u).
\end{equation*}
Finally, the trapezoidal rule in the definition of $G_h$ can be expressed in the form of a mass lumping scheme, i.e.,
\begin{equation*}
  G_h(u_h)=\sum_{x_i\in \overline{\omega_c}}d_ig(\textbf u_i)=: \textbf G_h(\textbf u)
\end{equation*}
where
\begin{equation}\label{eq:multi-bang-scalar}
  g(t) = \begin{cases}
    \infty & t < u_1,\\
    \frac12 \left((u_{i}+u_{i+1})t - u_iu_{i+1}\right) & t \in [u_i,u_{i+1}],\quad 1\leq i <m,\\
    \infty & t > u_m,
  \end{cases}
\end{equation}
is the scalar multi-bang penalty including the box constraints from \cite{CK:2015} and
$d_i=\int_{\omega_c}\phi_i\dx$ are the diagonal entries of the lumped mass matrix; see \cite{CHW:2012,Pieper,Trautmann,RoeschWachsmuth:2017}.

Using these notations, we can write \eqref{eq:discrete-problem-func} equivalently in the form
\begin{equation}\label{eq:discrete-problem}
  \min_{\textbf u\in U_h}\frac 12 \|\textbf S_\vartheta(\textbf u)-\pi_{\calO}y_d\|_{\calO_h}^2+ \alpha \textbf G_h(\textbf u) + \beta \textbf{TV}_h(\textbf u).
\end{equation}

\subsection{Primal-dual proximal splitting}\label{sec:pdps}

To solve \eqref{eq:discrete-problem}, we extend the approach in \cite{TramDo} by applying the nonlinear primal-dual proximal splitting method from \cite{Valkonen:2014,CV:2017,CMV:2019} together with a lifting trick. For this purpose, we write \eqref{eq:discrete-problem} (omitting the bold notation and the subscripts denoting vectors and discretizations from now on and assuming that $y_d\in \calO_h$) as
\begin{equation*}
  \min_{u\in U} \frac12 \norm{S(u) - y_d}_{\calO}^2 + \beta \norm{|A_hu|_2}_1 + \alpha G(u).
\end{equation*}
Setting
\begin{align*}
  &\mathcal F: \R^{N_o} \times \R^{2M_c} \to \R, &&(y,\psi)\mapsto \frac12\norm{y-y_d}_{\calO}^2 + \beta\norm{|\psi|_2}_1,\\
  &K:U\to (\R^{N_o} \times \R^{2M_c}), && u\mapsto (S(u),A_h u),
\end{align*}
we can apply the nonlinear primal-dual proximal splitting algorithm
\begin{equation}\label{eq:nlpdps}
  \left\{
    \begin{aligned}
      u^{k+1} &= \prox_{\gamma_G \alpha G_h}\left(u^k-\gamma_G K'(u^k)^* \xi^k\right)\\
      \hat u^{k+1} &= 2 u^{k+1} - u^k\\
      \xi^{k+1} &= \prox_{\gamma_F \calF^*}\left(\xi^k + \gamma_F K(\hat u^{k+1})\right)
    \end{aligned}
  \right.
\end{equation}
for step sizes $\gamma_F,\gamma_G>0$ satisfying $\gamma_F\gamma_G\norm{K'(u)^*}<1$. Convergence can be guaranteed under a second-order type condition for $K$ and possibly further restrictions on the step sizes, whose (very technical) verification is outside the scope of this work.
Instead, we restrict the discussion here on deriving the explicit form of \eqref{eq:nlpdps} in the present setting.

First, we endow $(\R^{N_o}\times \R^{2M_c})$ with the sum of the inner product induced by $M_\calO$ (for $\R^{N_o}$) and the Euclidean inner product (for $\R^{2M_c}$). With respect to this inner product, we obtain the adjoint Fréchet derivative
\begin{equation*}
  K'(u)^*(r,\xi) = S'(u)^*(r) + A_h^T\psi,
\end{equation*}
where $S'(u)^*$ is the fully discrete operator corresponding to \eqref{eq:disrete-adjoint-operator} with right-hand side $r\in \calO$ for the adjoint equation.

The proximal point mapping for the (scaled) multi-bang penalty can be obtained by straightforward calculation based on a case differentiation in the definition of the subdifferential, see \cite[Prop.~3.6]{TramDo}; for the sake of completeness, we give the short derivation here in full.
By the definition of the proximal mapping, $w=\prox_{\gamma g}(v) = (\Id + \gamma \partial g)^{-1}(v)$ holds for any $v\in \R$ if and only if $v \in \{w\} + \gamma \partial g(w)$.
Recalling from \cite[\S\,2]{CD:2017} that
\begin{equation}
  \label{eq:mb-subdiff-h}
  \partial g(v) =
  \begin{cases}
    (-\infty,\tfrac12(u_1+u_2)] & v = u_1,\\
    \{\tfrac12(u_{i}+u_{i+1})\} & v \in (u_{i},u_{i+1}), \quad 1 \leq i < m,\\
    [\tfrac12(u_{i-1}+u_i), \tfrac12(u_i+u_{i+1}] & v = u_i, \quad 1<i<m,\\
    [\tfrac12(u_{m-1}+u_m),\infty) & v = u_m,\\
    \emptyset & \text{otherwise},
  \end{cases}
\end{equation}
we now distinguish the following cases for $w$:
\begin{enumerate}
  \item $w=u_1$: In this case,
    \begin{equation*}
      v \in \{w\} + \gamma \left(-\infty,\tfrac12(u_1+u_2)\right] = \left(-\infty,(1+\tfrac{\gamma}2) u_1 + \tfrac{\gamma}2 u_2\right].
    \end{equation*}
  \item $w\in (u_i,u_{i+1})$ for $1\leq i< m$: In this case,
    \begin{equation*}
      v \in \{w\} + \gamma \{\tfrac12(u_i + u_{i+1})\},
    \end{equation*}
    which first can be solved for $w$ to yield
    \begin{equation*}
      w = v - \tfrac{\gamma}2 (u_i+u_{i+1});
    \end{equation*}
    inserting this into $w\in (u_i,u_{i+1})$ and simplifying then gives
    \begin{equation*}
      v \in \left((1+\tfrac{\gamma}2)u_i + \tfrac{\gamma}2 u_{i+1}, \tfrac{\gamma}2 u_i + (1+\tfrac{\gamma}2)u_{i+1}\right).
    \end{equation*}
  \item $w = u_i$, $1<i<m$: Proceeding as in the first case, we obtain
    \begin{equation*}
      v \in \left[\tfrac{\gamma}2 u_{i-1},(1+\tfrac{\gamma}2)u_i,(1+\tfrac{\gamma}2) u_i + \tfrac{\gamma}2 u_{i+1}\right].
    \end{equation*}
  \item $w=u_m$: Similarly, this implies that
    \begin{equation*}
      v \in \left[\tfrac{\gamma}2 u_{m-1},(1+\tfrac{\gamma}2)u_m,\infty\right).
    \end{equation*}
\end{enumerate}
Since this is a complete and disjoint case distinction for $v\in \R$, we obtain the proximal mapping for the scalar penalty $g$; see \cref{fig:mbprox}.
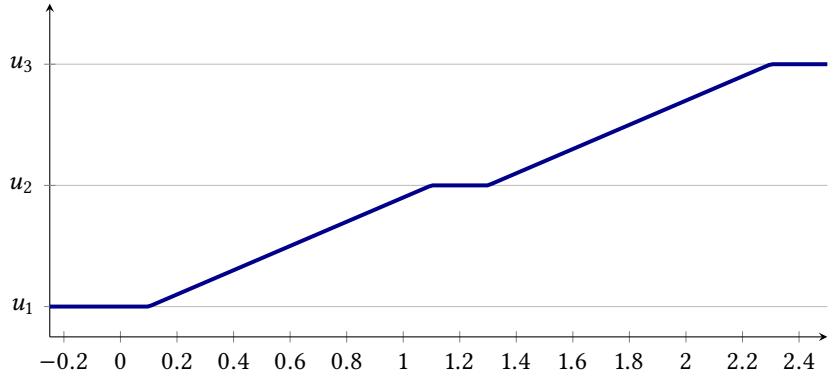
\begin{figure}
  \centering
  \begin{tikzpicture}
    \begin{axis}[%
      width=0.75\linewidth,
      height=6cm,
      xmin=-0.25,
      xmax=2.5,
      ymin=-0.25,
      ymax=2.5,
      ytick=\empty,
      extra y tick style={grid=major},
      extra y ticks={0,1,2},
      extra y tick labels={$u_1$,$u_2$,$u_3$},
      axis y line=left,
      axis x line=bottom,
      ]
      \pgfmathsetmacro{\ua}{0}
      \pgfmathsetmacro{\ub}{1}
      \pgfmathsetmacro{\uc}{2}
      \pgfmathsetmacro{\at}{0.2}
      \addplot[%
        domain=-0.25:2.5,samples=200,
        color=DarkBlue,solid,line width=1.5pt,
        ]
        {
          (x<=\ua)*\ua + %
          and(x>(1+\at/2)*\ua+\at/2*\ub,x<(1+\at/2)*\ub+\at/2*\ua)*(x-\at/2*(\ua+\ub)) + %
          and(x>=(1+\at/2)*\ub+\at/2*\ua,x<=(1+\at/2)*\ub+\at/2*\uc)*\ub + %
          and(x>(1+\at/2)*\ub+\at/2*\uc,x<(1+\at/2)*\uc+\at/2*\ub)*(x-\at/2*(\ub+\uc)) + %
          (x>=(1+\at/2)*\uc+\at/2*\ub)*\uc
        };
    \end{axis}
  \end{tikzpicture}
  \caption{Illustration of the proximal point mapping $\prox_{\gamma g}$ of the scalar multi-bang penalty $g$ for $(u_1,u_2,u_3)$ = $(0,1,2)$ and $\gamma=0.2$}
  \label{fig:mbprox}
\end{figure}
By a standard argument, the proximal point mapping for $G_h$ is thus given componentwise by
\begin{equation*}
  [\prox_{\gamma \alpha G_h}(v)]_j =
  \begin{cases}
    v_j-\frac{\alpha\gamma}2 (u_{i} + u_{i-1}) & \text{if } v_j \in \left(\left(1+\tfrac{\alpha\gamma}2\right)u_{i-1}+\tfrac{\alpha\gamma}2 u_{i},\left(1+\tfrac{\alpha\gamma}2\right)u_{i}+\tfrac{\alpha\gamma}2 u_{i-1}\right),\\
    u_i &\text{if } v_j\in \left[\left(1+\tfrac{\alpha\gamma}2\right)u_{i}+\tfrac{\alpha\gamma}2 u_{i-1},\left(1+\tfrac{\alpha\gamma}2\right)u_{i}+\tfrac{\alpha\gamma}2 u_{i+1}\right],
  \end{cases}
\end{equation*}
where we have set $u_{0}=-\infty$ and $u_{m+1}=\infty$ to avoid the need for further case distinctions.
(Note that we compute the proximal mapping with respect to the inner product induced by the lumped mass matrix such that the weight $d_i$ cancels.)

Finally, for $\calF$, we first compute the Fenchel conjugate on $\R^{N_o}\times \R^{2M_c}$ (with respect to the same inner product as above) as
\begin{equation*}
  \calF^* (r,\psi)
  = \frac{1}{2}\norm{r}_{\calO}^2 + (r,y^d)_{\calO} + \delta_{\{|\psi|_2\leq \beta\}}(\psi)
\end{equation*}
to obtain the proximal point mapping (again, with respect to this inner product)
\begin{equation*}
  \prox_{\gamma \calF^*}(r,\psi) =
  \begin{pmatrix}
    \frac{1}{1+\gamma} (r - \gamma y^d)\\
    \proj_{\{|\psi|_2\leq \beta\}}(\psi)
  \end{pmatrix},
\end{equation*}
where the projection can be computed elementwise for each $K\in \calT\cap \omega_c$ as
\begin{equation*}
  [ \proj_{\{|\psi|_2\leq \beta\}}(\psi)]_K = \frac{\beta \psi}{\max\{\beta,|\psi|_2\}}.
\end{equation*}
With these, \eqref{eq:nlpdps} becomes the following explicit algorithm:
\begin{align*}
  u^{k+1} &= \prox_{\gamma_G\alpha G_h} \left(u^k-\gamma_G S'(u^k)^*r^k - \gamma_G A_h^T\psi^k\right),\\
  \hat u^{k+1} &= 2 u^{k+1} - u^k,\\
  y^{k+1} &= S(\hat u^{k+1}),\\
  r^{k+1} &= \frac{1}{1+\gamma_F} \left(r^k + \gamma_F(y^{k+1}- y^d)\right),\\
  q^{k+1} &= \psi^k + \gamma_F A_h\bar u^{k+1},\\
  \psi^{k+1} &= \proj_{\{|\psi|_2\leq \beta\}}(\psi).
\end{align*}
Note that this requires two solutions of the forward wave equation (as well as one solution of the adjoint equation) in each iteration, since $y^{k+1}$ is based on the extrapolated vector $\bar u^{k+1}$, while the state vector $y$ required for the computation of $S'(u^{k+1})^*r^{k+1}$ in the following iteration is based on the original update $u^{k+1}$.

The iteration is terminated based on the residual norm in an equivalent reformulation of the optimality conditions for \eqref{eq:discrete-problem-func}. Combining the approach of \cref{sec:existence} with standard results from convex analysis (see, e.g, \cite{Bauschke:2011,Clason:2017}), any local minimizer $\bar u$ of \eqref{eq:discrete-problem-func} together with the corresponding Lagrange multiplier $\bar \psi$ and the residual $\bar r := S(\bar u)-y_d$ can be shown to satisfy
\begin{equation*}
  \left\{
    \begin{aligned}
      \bar u &= \prox_{\gamma_G\alpha G_h} \left(\bar u-\gamma_G S'(\bar u)^*\bar r - \gamma_G A_h^T\bar \psi\right),\\
      \bar r &= \frac{1}{1+\gamma_F} \left(\bar r + \gamma_F(S(\bar u) - y_d)\right),\\
      \bar \psi &= \proj_{\{|\psi|_2\leq \beta\}}(\bar \psi).
    \end{aligned}
  \right.
\end{equation*}
For the first equation, which holds in $U_h$, we measure the residual in the discrete norm induced by the lumped mass matrix as in the definition of $G_h$. The second equation holds in $\calO_h$, and hence we measure the residual in the norm induced by the corresponding mass matrix $M_\calO$. Finally, the last equation holds in $\R^{2N_c}$ so we use the standard Euclidean norm. The iteration is terminated once the sum of these residuals drops below a given tolerance. For the implementation, note that the residual in the first equation for $(u^k,r^k,\psi^k)$ reduces to $u^{k}-u^{k+1}$. On the other hand, the residual in the second equation requires an additional solution of the state equation since here $S$ is applied to $u^k$ instead of the extrapolated $\bar u^k$. In practice, we thus do not evaluate the stopping criterion in every iteration.

\section{Numerical examples}\label{sec:examples}

We now illustrate the above presented approach with two numerical examples. The first is a transmission problem (where waves produced by external forcing pass through the control domain before being observed) loosely motivated by acoustic tomography. The second is a reflection problem (where only reflected, not transmitted, waves are observed) that more closely models seismic inversion.
The implementation in Python (using DOLFIN \cite{LoggWells2010a,LoggWellsEtAl2012a}, which is part of the open-source computing platform FEniCS \cite{AlnaesBlechta2015a,LoggMardalEtAl2012a}) used to generate the following results can be downloaded from \url{https://github.com/clason/tvwavecontrol}.

\subsection{A model acoustic tomography problem}\label{sec:ex-transmission}

For the first example, we take $\Omega = (-1,1)\times (-1,2)$ and $T=3$ and define the control and observation domains
\begin{equation*}
  \omega_c := \{ (x_1,x_2)\in \Omega \mid x_2 \in (0,1) \},\qquad
  \omega_o := \{ (x_1,x_2)\in \Omega \mid x_2 \in (1,2) \};
\end{equation*}
correspondingly, the observation operator is taken as the restriction operator $By := y|_{\calO}$ to the observation space $\calO := (0,T)\times \omega_o$.
The initial conditions are chosen as $(y_0,y_1) = (0,0)$, thus satisfying \cref{ass:data}.
We now aim to recover a piecewise constant coefficient $u_e$ with $u_e(x) \in \{1.0,1.1,1.2,1.3,1.4\}$ almost everywhere; see \cref{fig:transmission-exact}.
\begin{figure}
  \centering
  \includegraphics[width=0.45\linewidth]{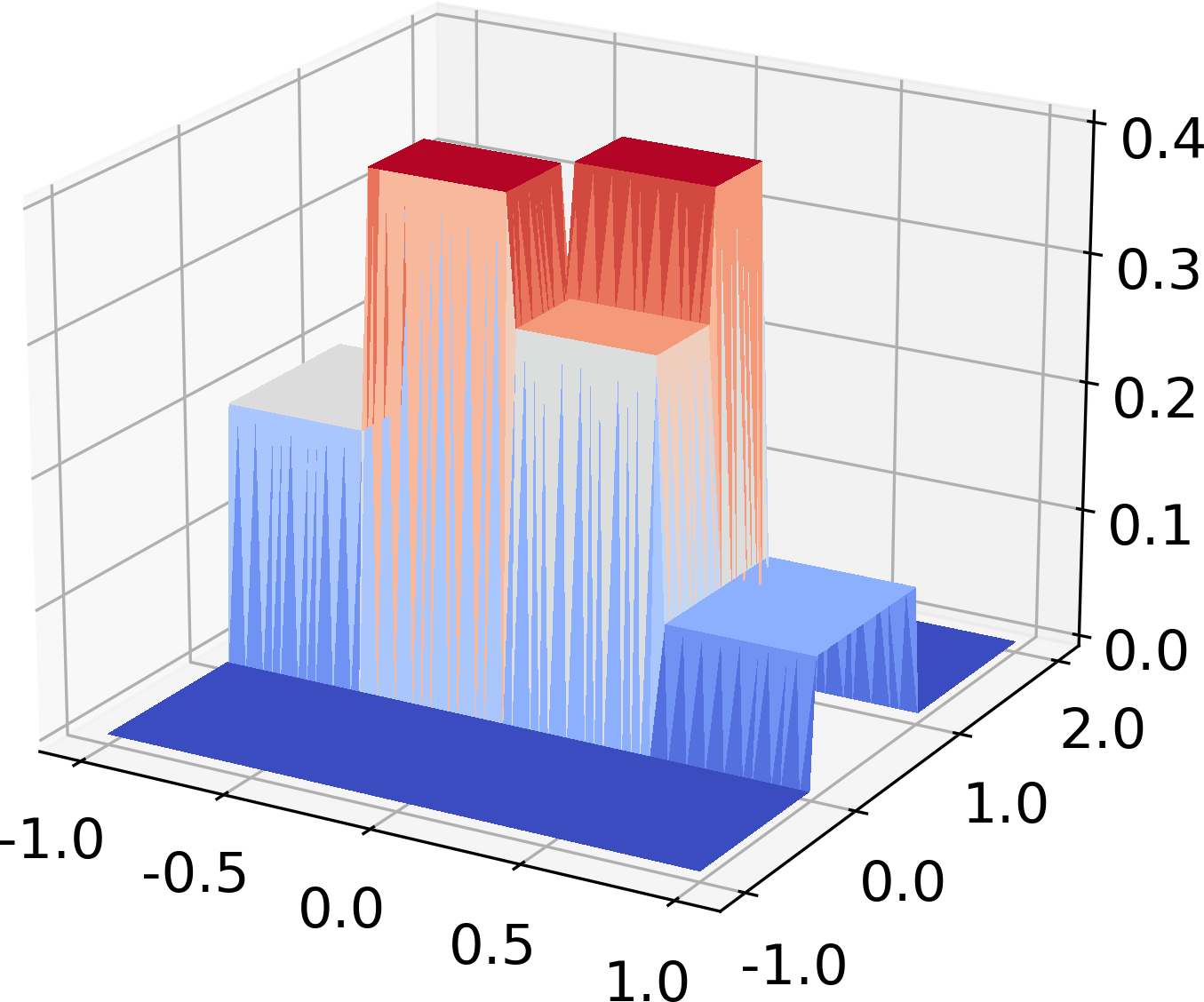}
  \hfill
  \includegraphics[width=0.45\linewidth]{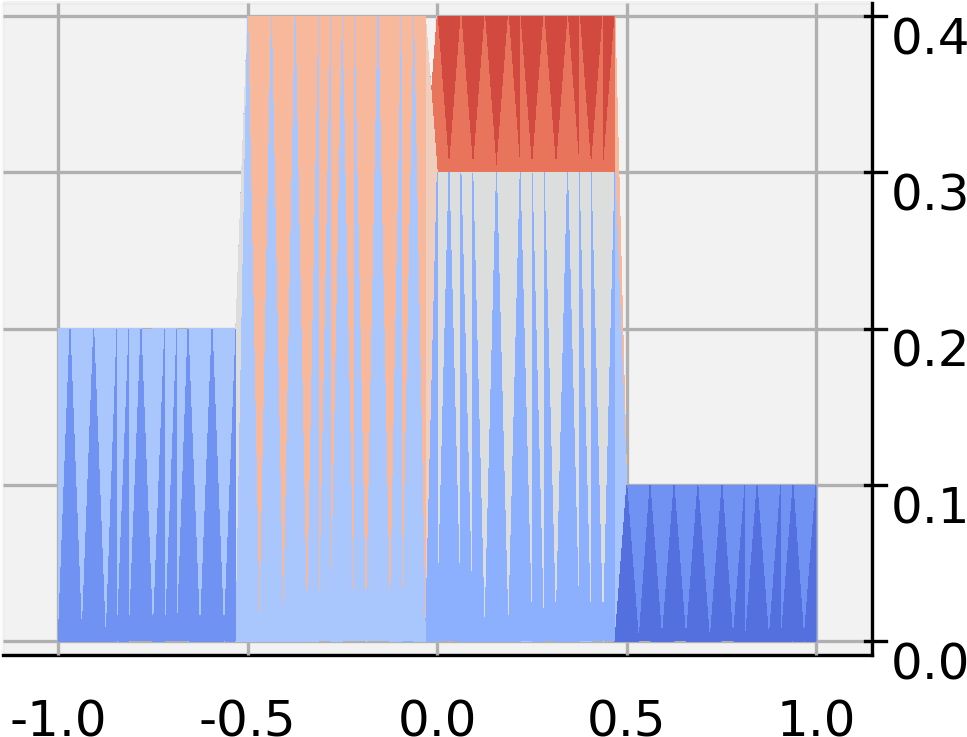}
  \caption{transmission example, exact coefficient $u_e$}
  \label{fig:transmission-exact}
\end{figure}
Accordingly, we set $\hat u \equiv 1.0$ and $u_i = (i-1)/10$, $i=1,\dots 5$ from noisy observations of the state in $\calO$. These observations are generated using a source term $f$ that is constructed as a linear combination of point sources which act as Ricker wavelets in time, i.e.,
\begin{equation*}
  f(x,t) := \sum_{i=-9}^9 (\delta_{(i/10,-9/10)}(x) + \delta_{(0.05+i/10,-8/10)}(x))
  2(1-2(5\pi(t-0.1))^2)e^{-(5\pi(t-0.1))^2}.
\end{equation*}
(The number and location of source points as well as the amplitude and frequency of the wavelet are chosen such as to obtain a complex enough wave pattern to recover the lateral and depth-wise variations in the coefficient.)
The discretization is performed using $64$ nodes in each space direction and $128$ nodes in time, corresponding to $h\approx 0.056$ and $\tau \approx 0.23$. The stabilization constant is set to $\sigma = 1/4$.
The discretized exact solution is then perturbed componentwise by $10\%$ relative Gaussian noise, i.e., we take
\begin{equation*}
  y_d = B(y(u)) + 0.1\norm{B(y(u))}_{\infty}\xi,
\end{equation*}
where $\xi$ is a vector of independent normally distributed random components with mean $0$ and variance $1$.

We now compute the reconstruction using the algorithm described in \cref{sec:pdps}, comparing the effects of the total variation and the multi-bang penalty by taking $\alpha \in \{0,10^{-5}\}$ and $\beta \in \{0,10^{-4}\}$. In each case, we set the step sizes to $\gamma_F = 10^{-1}$ and $\gamma_G = 10^3$ and terminate when the residual norms (evaluated every $10$ iterations) drop below $10^{-6}$.
Again, these parameters are chosen to achieve a reasonable reconstruction in as few iterations as possible. (A proper parameter choice rule depending on the noise level and the discretization is left for future work.)
The results can be seen in \cref{fig:transmission-reconstruction}. The case of pure multi-bang regularization ($\alpha = 10^{-5}$ and $\beta = 0$, $3680$ iterations); see \cref{fig:transmission-reconstruction:mb}) shows that indeed $u(x)\in \{u_1,\dots,u_5\}$ almost everywhere; however, there is a clear lack of regularity of the reconstruction, which is not surprising as the original function-space problem is not well-posed for $\beta=0$. In contrast, the reconstruction case of pure $\TV$ regularization ($\alpha = 0$ and $\beta = 10^{-4}$, $1100$ iterations; see \cref{fig:transmission-reconstruction:tv}) shows a much more regular reconstruction that is constant on large regions; however, these constants are not necessarily from the admissible set $\{u_1,\dots,u_5\}$. Finally, combining both multi-bang and total variation regularization ($\alpha = 10^{-5}$ and $\beta = 10^{-4}$, $600$ iterations; see \cref{fig:transmission-reconstruction:tvmb}) allows recovering more admissible values at the price of penalizing the magnitude of the coefficient value, which prevents the largest value $u_5=0.4$ from being attained. It is also noteworthy that in this case the tolerance for the residual norm is reached after significantly fewer iterations.
\begin{figure}[p]
  \centering
  \begin{subfigure}{\linewidth}
    \centering
    \includegraphics[width=0.45\linewidth]{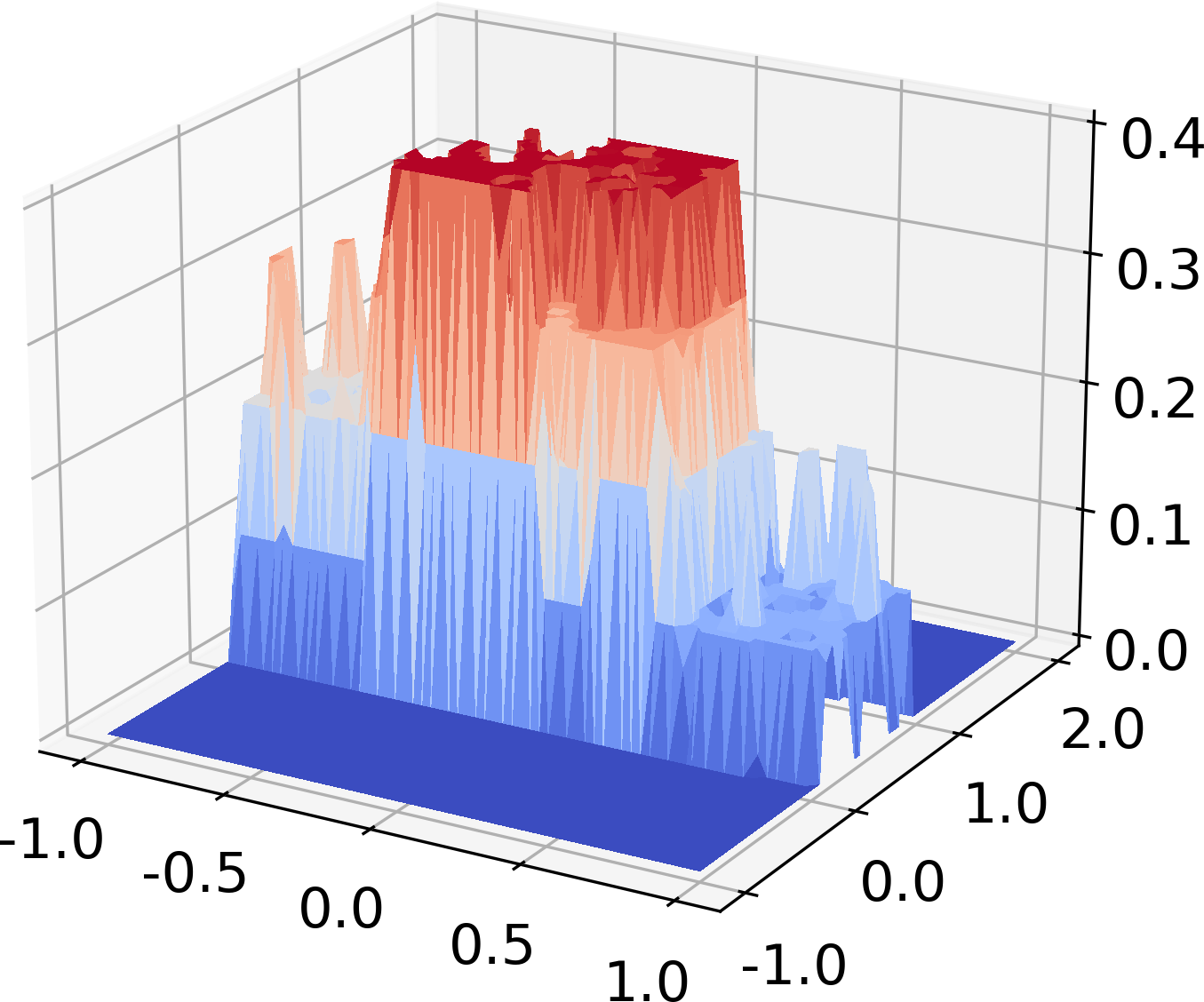}
    \hfill
    \includegraphics[width=0.45\linewidth]{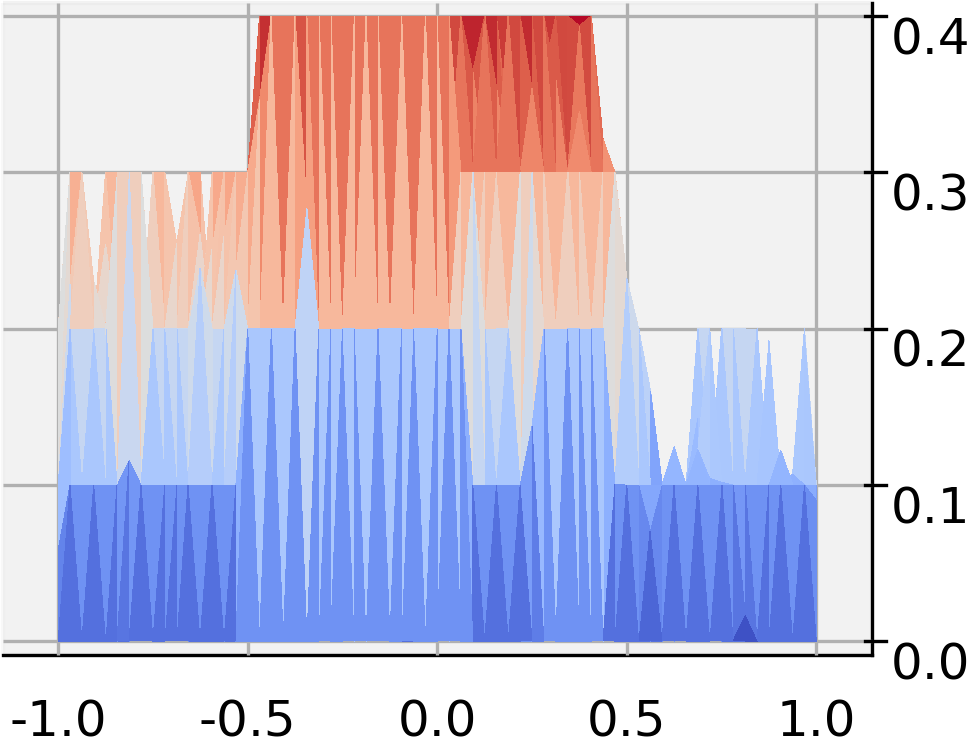}
    \caption{$\alpha = 10^{-5}$, $\beta = 0$, $3680$ iterations}
    \label{fig:transmission-reconstruction:mb}
  \end{subfigure}

  \begin{subfigure}{\linewidth}
    \centering
    \includegraphics[width=0.45\linewidth]{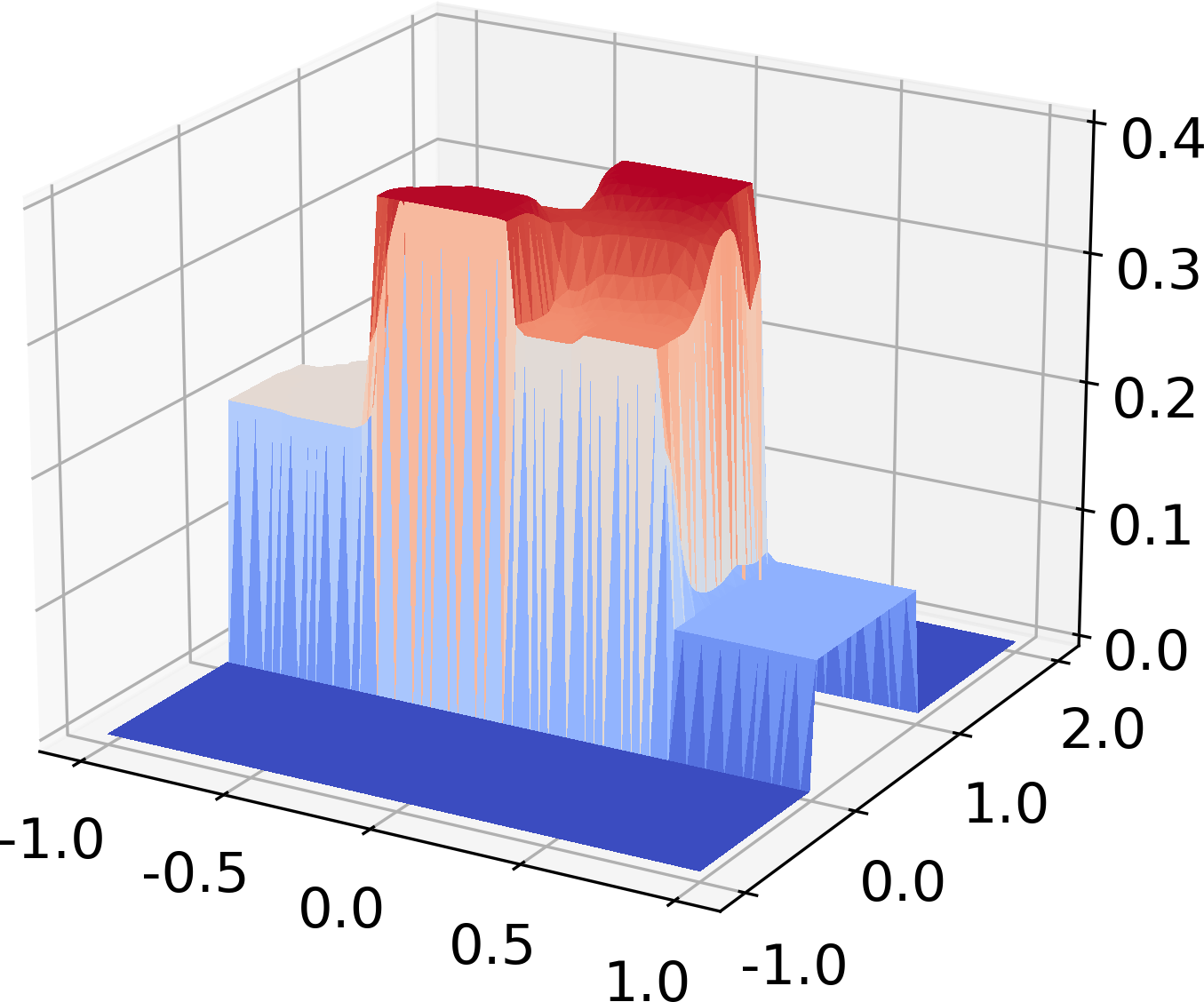}
    \hfill
    \includegraphics[width=0.45\linewidth]{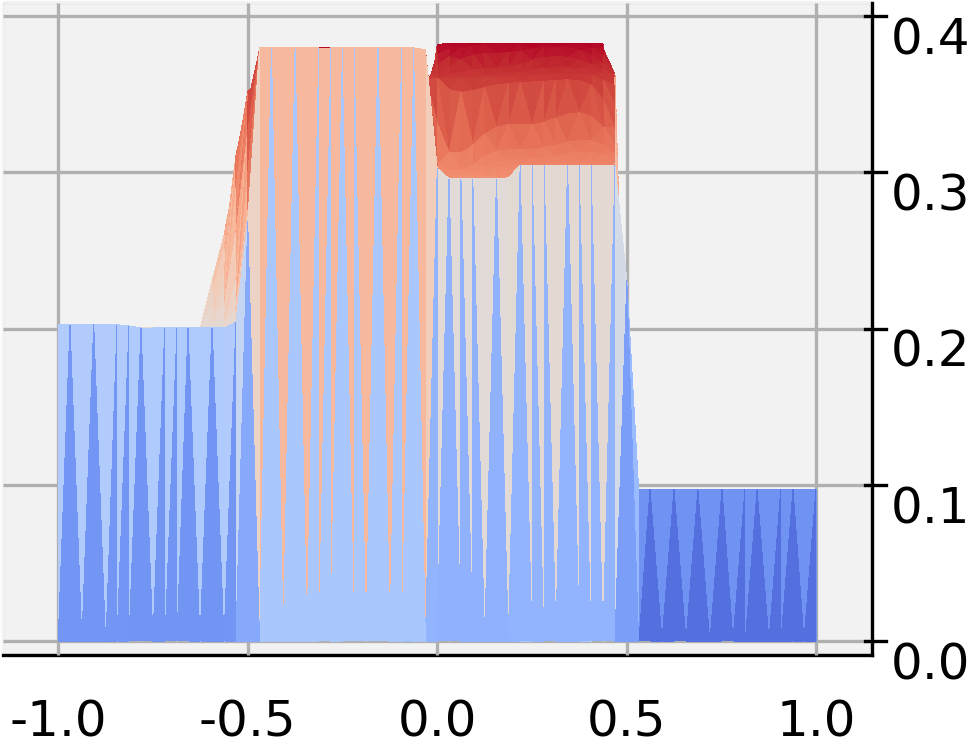}
    \caption{$\alpha = 0$, $\beta = 10^{-4}$, $1100$ iterations}
    \label{fig:transmission-reconstruction:tv}
  \end{subfigure}

  \begin{subfigure}{\linewidth}
    \centering
    \includegraphics[width=0.45\linewidth]{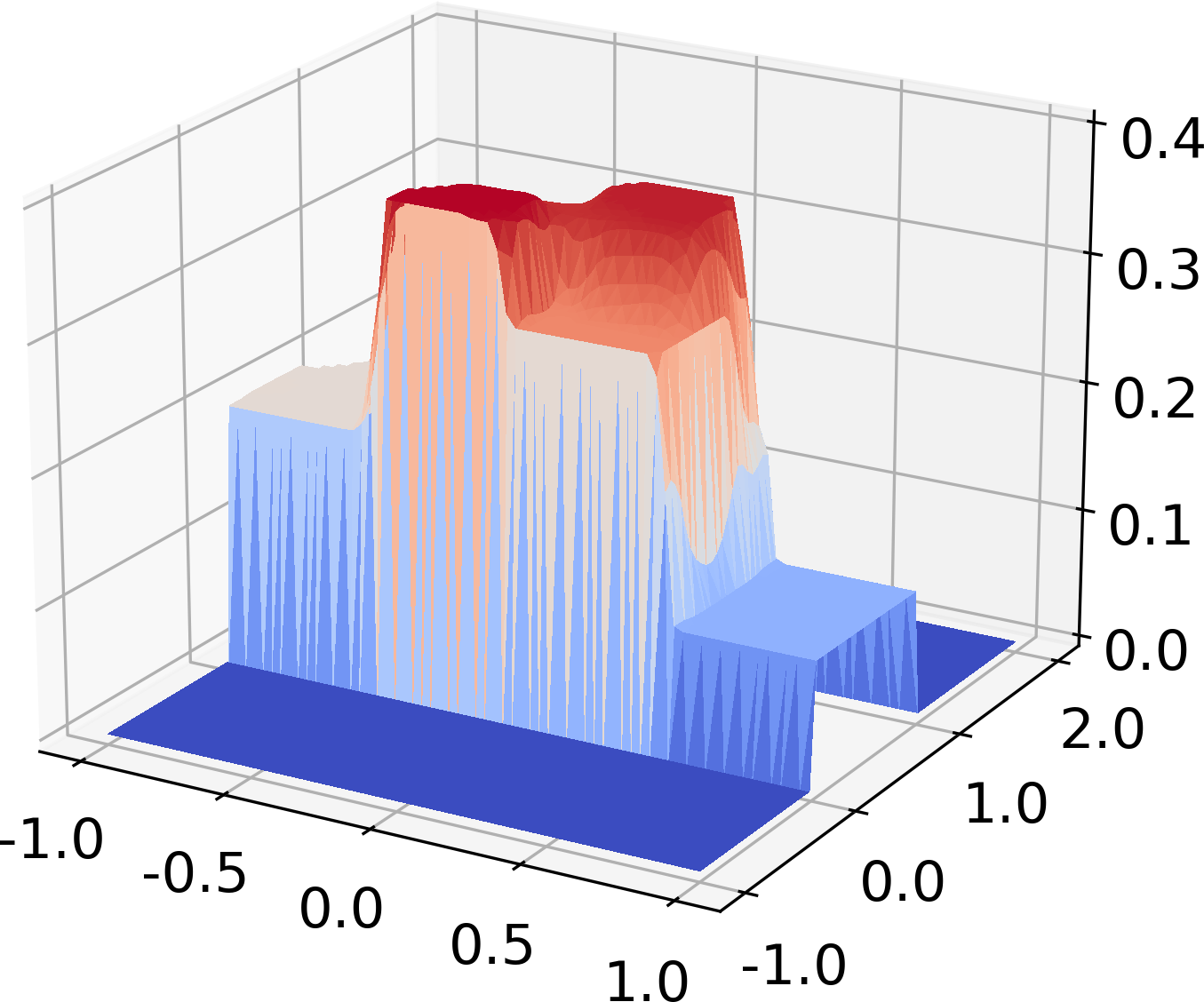}
    \hfill
    \includegraphics[width=0.45\linewidth]{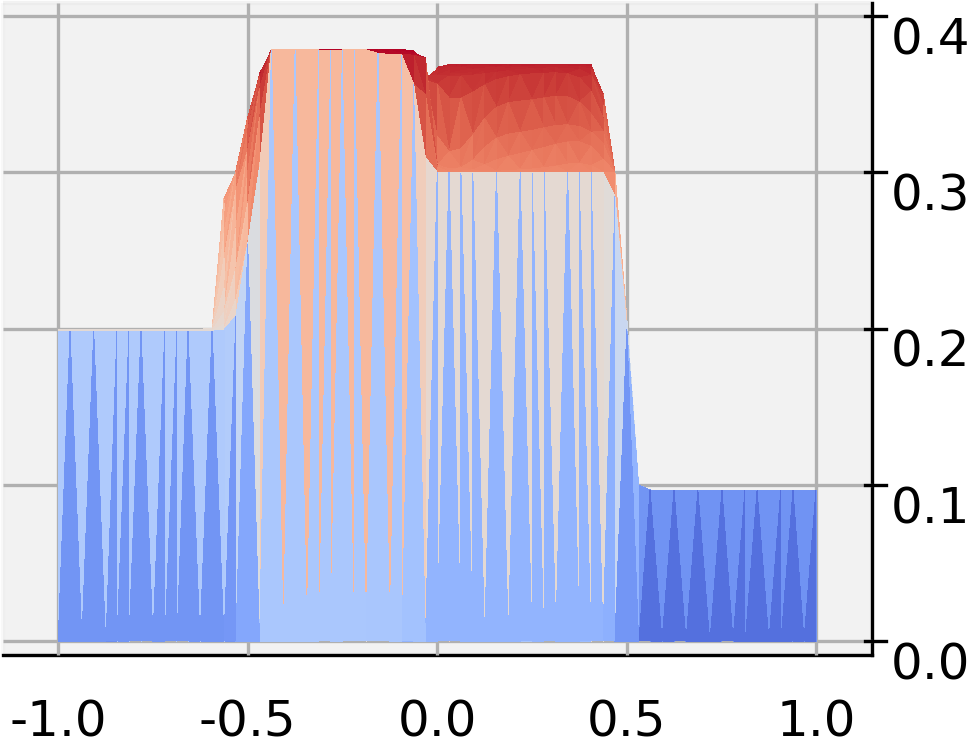}
    \caption{$\alpha = 10^{-5}$, $\beta = 10^{-4}$, $600$ iterations}
    \label{fig:transmission-reconstruction:tvmb}
  \end{subfigure}
  \caption{transmission example, effect of multi-bang penalty ($\alpha$) and total variation penalty ($\beta$) on the reconstruction}
  \label{fig:transmission-reconstruction}
\end{figure}

To illustrate the effects of variation of the desired values $u_i$ on the reconstruction, we recompute the last example with the same parameters $\alpha,\beta$ but 10\% increased values, i.e., $u_i=1.1(i-1)/10$, $i=1,\dots,5$. The results are shown in \cref{fig:transmission-ui}, where we repeat the exact coefficient from \cref{fig:transmission-exact} with adjusted labels in \cref{fig:transmission-ui:exact} for better comparison. As can be seen from \cref{fig:transmission-ui:mb}, the reconstruction is similar to that for $\alpha=0$. In particular, the total variation penalty prevents the misspecified desired values from being enforced strongly. This demonstrates that while misspecified values clearly do not have the same positive influence on the reconstruction, they at least do not have a negative influence.
\begin{figure}[t]
  \centering
  \begin{subfigure}{\linewidth}
    \centering
    \includegraphics[width=0.45\linewidth]{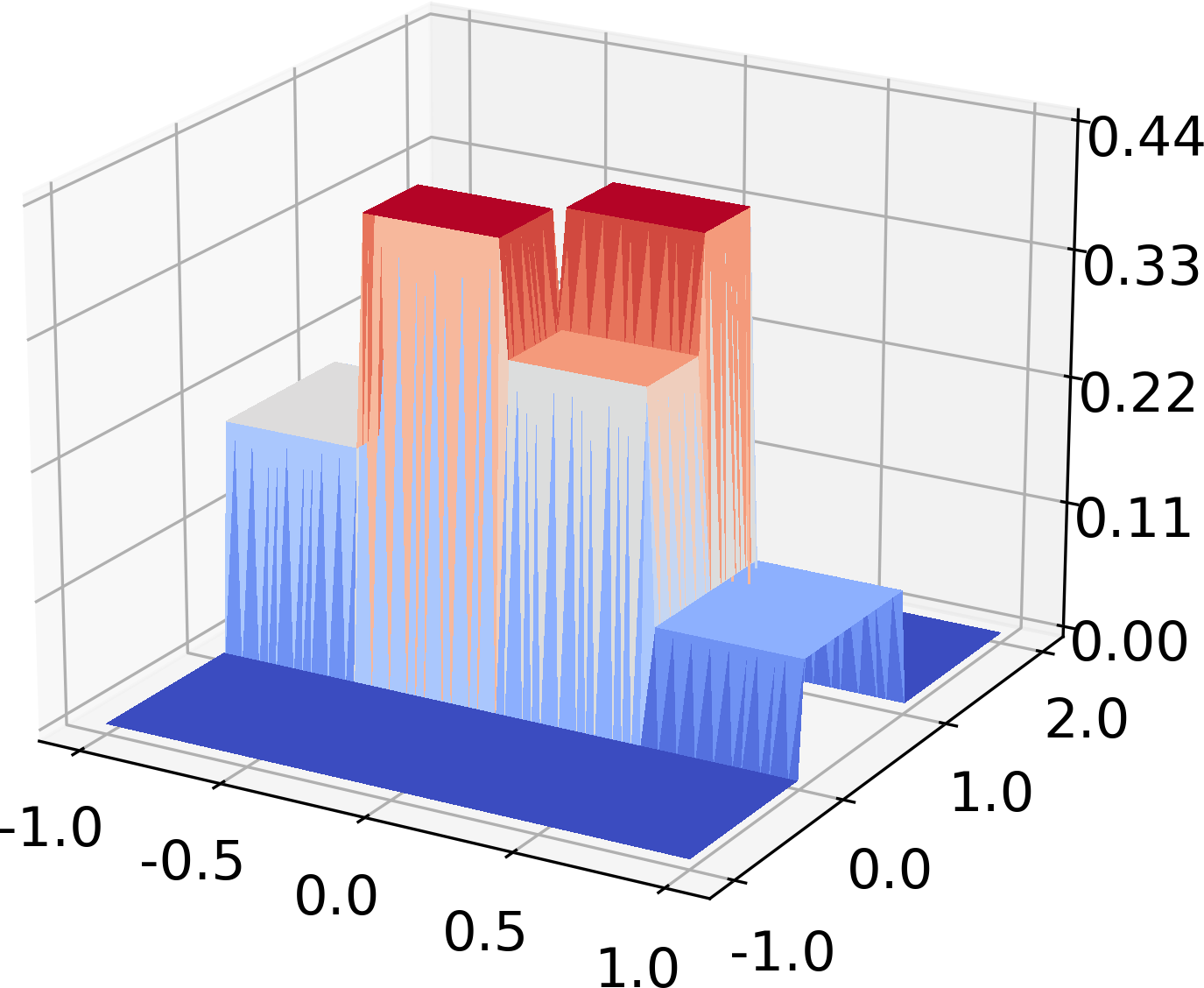}
    \hfill
    \includegraphics[width=0.45\linewidth]{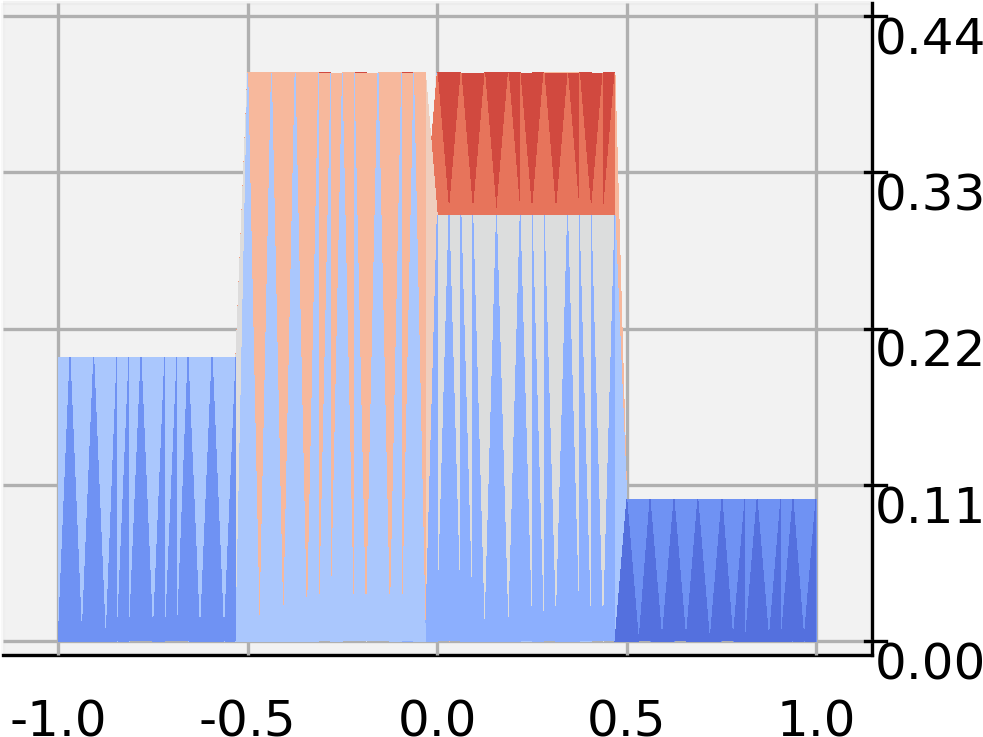}
    \caption{exact coefficient from \cref{fig:transmission-exact} for comparison}
    \label{fig:transmission-ui:exact}
  \end{subfigure}

  \begin{subfigure}{\linewidth}
    \centering
    \includegraphics[width=0.45\linewidth]{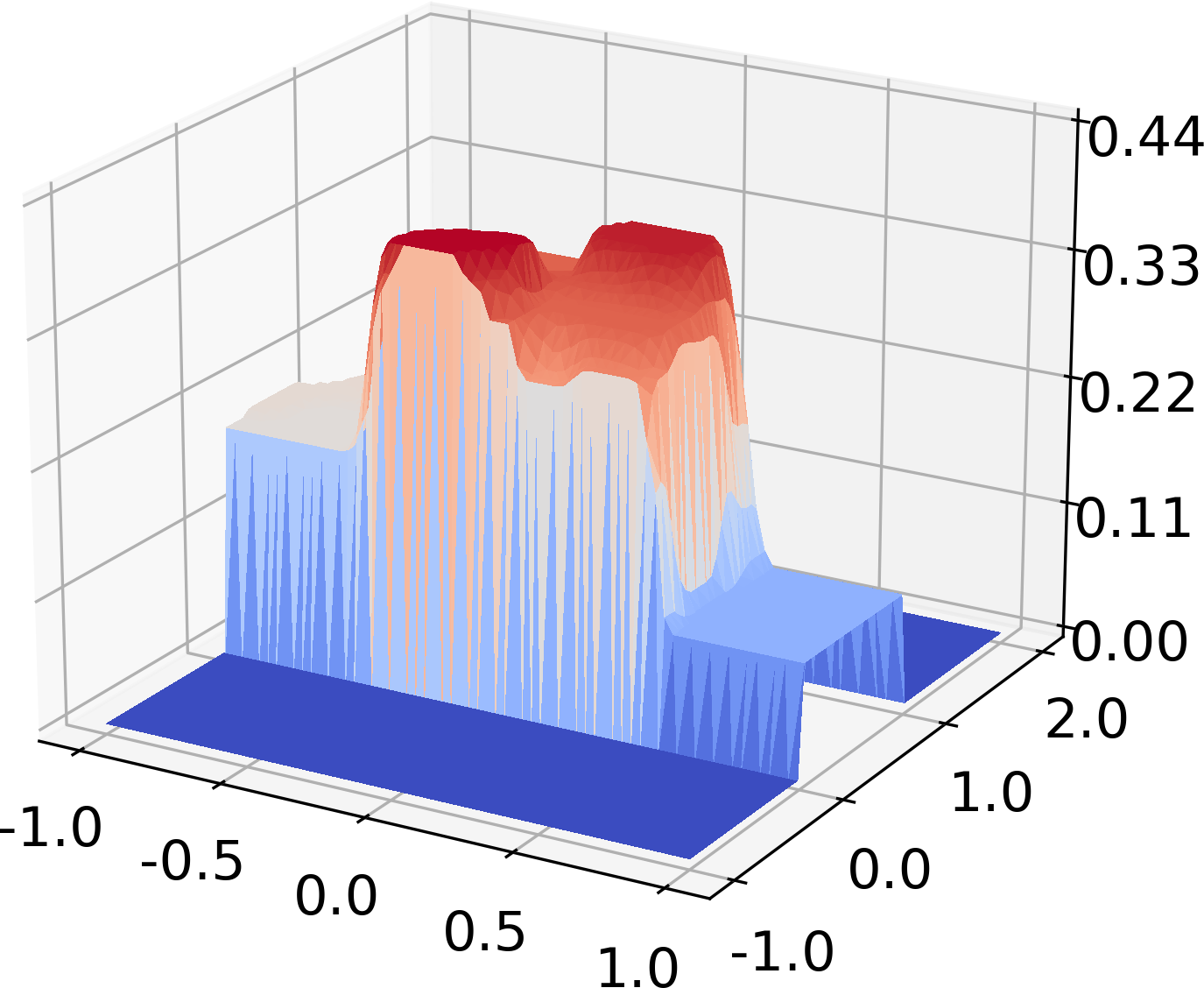}
    \hfill
    \includegraphics[width=0.45\linewidth]{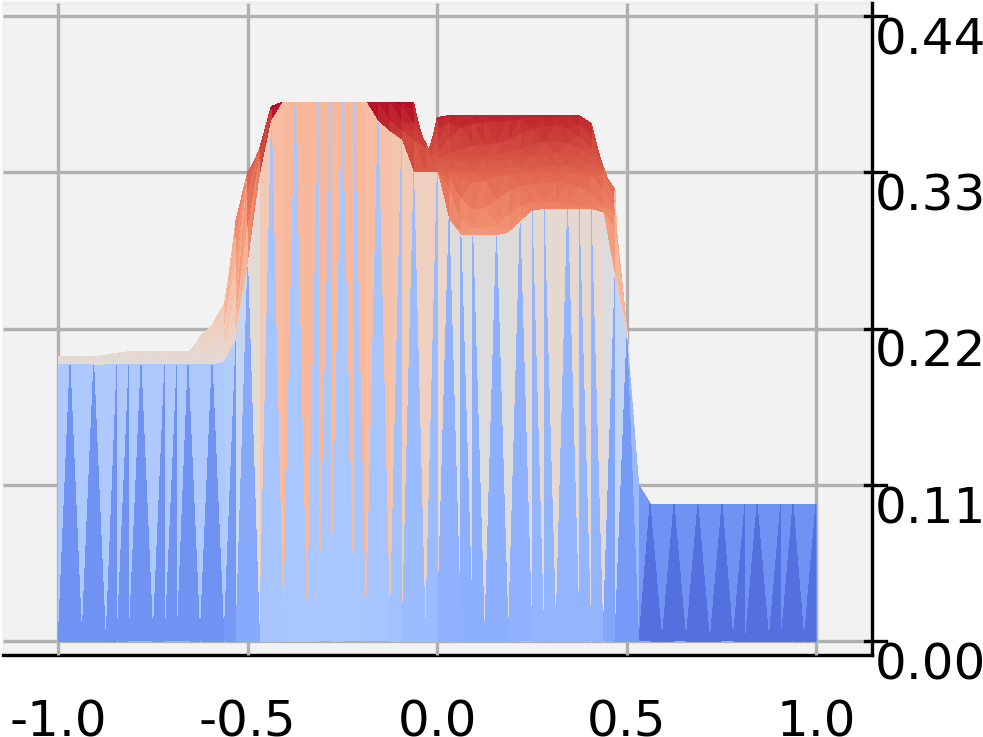}
    \caption{$u_i\in\{0,0.11,0.22,0.33,0.44\}$, $\alpha = 10^{-5}$, $\beta = 10^{-4}$, $820$ iterations}
    \label{fig:transmission-ui:mb}
  \end{subfigure}
  \caption{transmission example, effect of variation of $u_i$ on reconstruction}
  \label{fig:transmission-ui}
\end{figure}

\subsection{A model seismic inverse problem}\label{sec:ex-reflection}

We next consider an example which is inspired from seismic tomography.
We assume that the data is given in the form of a time series of mean values of the reflected waves $y$ over certain spatial regions $O_i$. Thus we define the observation space $\calO=L^2(I)^m$ for $m\in \mathbb N$ and the observation operator
\begin{equation*}
  B\colon L^2(Q)\to \calO,\qquad y\mapsto \left(\frac1{|O_i|}\int_{O_i}y(\cdot,x)\dx\right)_{i=1}^m,
\end{equation*}
where the $O_i\subset \Omega$ are the $m$ spatial observation patches. Furthermore we assume that seismic sources are given by $s$ point sources located on the surface $\Gamma_s\subset \partial\Omega$ whose magnitudes are time dependent and follow a Ricker wavelet of the form
\begin{equation*}
  f_k(t)=a_k(1-2 \pi^2 h_k^2 (t-t_k)^2) e^{-\pi^2 h_k^2 (t-t_k)^2}
\end{equation*}
with $h,a,t\in \mathbb R^s$. This leads to the modified state equation
\begin{equation*}
  \left\{
    \begin{aligned}
      \partial_{tt} y - \div(u\nabla y) &= 0 &&\text{ in } Q, \\
      {\partial_\nu y} &= \sum_{k=1}^sf_k\delta_{x_k} && \text{ on } (0,T) \times \Gamma_s, \\
      {\partial_\nu y} &= 0&& \text{ on } (0,T) \times \partial \Omega\setminus \Gamma_s, \\
      y(0) &= 0,\quad {\partial_t} y(0) = 0, && \text{ on }\Omega
    \end{aligned}
  \right.
\end{equation*}
with $(x_k)_{k=1}^s\subset \Gamma_s$, $(f_k)_{k=1}^s\subset L^2(I)$, and $\delta_{x_k}$ the Dirac measure supported on $x_k$.
In our concrete example, we chose $\Omega=(-1,1)^2$, $\Gamma_s=(-1,1)\times \{1\}$, and $T=3$. We set $\Omega_c=(-1,1)\times (-1,0.7)$. The observation patches are chosen as
\begin{equation*}
  O_i=(o_i,o_i+0.2)\times (0.8,1) \quad\text{ with }\quad o_i\in\{-1,-0.8,-0.6,-0.4,-0.2,0,0.2,0.4,0.6,0.8\}.
\end{equation*}
The sources are located at $x=(-1+k\cdot 0.1,1)$ with $k=0,\ldots,20$. The parameters of the Ricker wavelet are set to $a_k=2$, $h_k=5$ and $t_k=0.1$. The offset $\hat u$ has the constant value $1$. Finally, the exact velocity model is given by
\begin{equation*}
  u_e=\begin{cases}
    3&x\in (0.4,0.6)\times (0.1,0.4),\\
    2&x\in (-0.8,-0.5)\times (0.2,0.6),\\
    1&x\in (-0.2,0.2)\times (0.3,0.5),\\
    0&\text{else,}
  \end{cases}
\end{equation*}
for the constant reference coefficient $\hat u \equiv 1$, cf. \cref{fig:reflection-recon:exact}.

The recorded data for our experiments are generated by solving the state equation with the exact velocity model $u_e$ resulting in the exact state $y_e$. Then we set $y_d=By_e+\delta n$ with $\delta\in [0,1]$. The function $n\in L^\infty(I)^m$ is a disturbance which models measurement errors and exterior influences. In our case we use a function of the form
\begin{equation*}
  n_k(t):=\eta_k r_k :=\eta_k\sum_{i=1}^M\frac{m_{i,k}}{i}\cos(4\pi t-s_{i,k}\pi),
\end{equation*}
where $M\in \mathbb N$, $\eta_k=\frac{\|(By_e)_k\|_{L^\infty(I)}}{\|r_k\|_{L^\infty(I)}}$, and $m_{i,k},s_{i,k}$ are uniform random numbers in $[0,1]$. Here we take $M=10$.

For the discretization, we take a tensorial-based triangular mesh with $N_h=129^2$, $N_\tau=129$, and $\sigma=1/4$. The relative noise level is $\delta =0.05$. An appropriate regularization parameter is given by $\beta = 10^{-4}$; for simplicity, we set $\alpha=0$. The iteration is initialized with $u_0=0$ and the stepsizes are again chosen as $\gamma_F=10^{-1}$ and $\gamma_G=10^3$. The iteration is stopped if the absolute residuum is smaller than $10^{-4}$; in this experiment, this was reached after $1068$ iterations.
\begin{figure}[t]
  \centering
  \begin{subfigure}{0.485\linewidth}
    \centering
    \includegraphics[width=\linewidth]{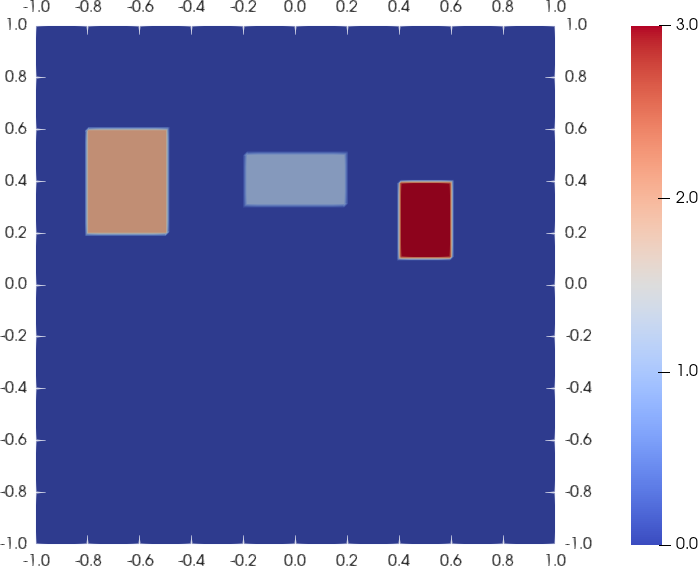}
    \caption{true coefficient $u_e$}\label{fig:reflection-recon:exact}
  \end{subfigure}
  \hfill
  \begin{subfigure}{0.485\linewidth}
    \centering
    \includegraphics[width=\linewidth]{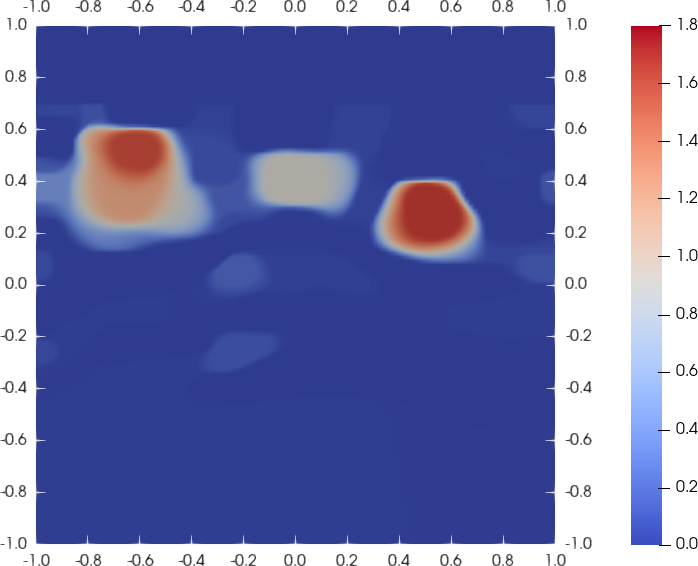}\quad
    \caption{reconstruction $\bar u$}\label{fig:reflection-recon:recon}
  \end{subfigure}
  \caption{reflection example: results for $\alpha=0$, $\beta=10^{-4}$, $1068$ iterations (note the different color bars)}\label{fig:reflection-recon}
\end{figure}
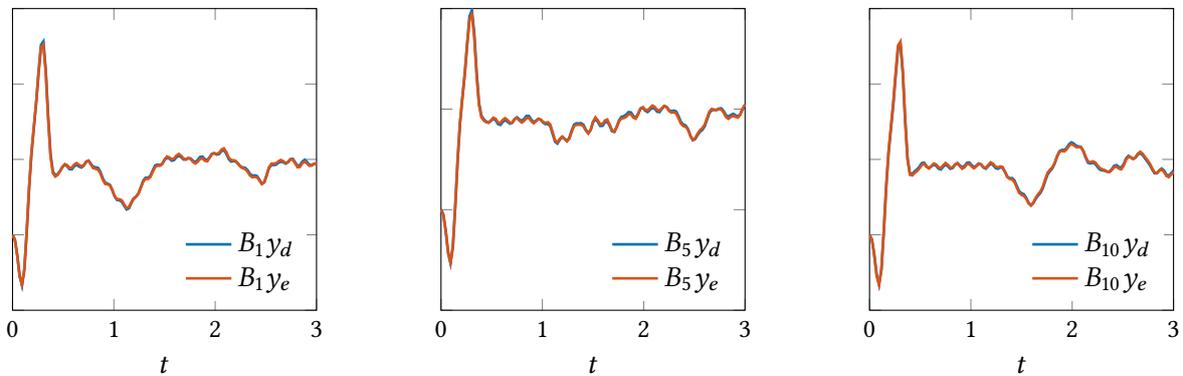
\begin{figure}[t]
  \centering
  \definecolor{mycolor1}{rgb}{0.00000,0.44700,0.74100}%
\definecolor{mycolor2}{rgb}{0.85000,0.32500,0.09800}%
\begin{tikzpicture}

\begin{axis}[%
width=4cm,
height=4cm,
scale only axis,
xmin=0,
xmax=3,
ymin=-0.2,
ymax=0.6,
xlabel = $t$,
legend style={legend cell align=left,align=left,draw=none},
]
\addplot [color=mycolor1,solid,line width=1.0pt]
  table[row sep=crcr]{%
0	0\\
0.0234375	-0.0128443905537077\\
0.046875	-0.0546086705323512\\
0.0703125	-0.109580435287627\\
0.09375	-0.134358901584836\\
0.1171875	-0.0921623267243629\\
0.140625	0.00987444406195484\\
0.1640625	0.1242595558131\\
0.1875	0.217051163502349\\
0.2109375	0.28529568880858\\
0.234375	0.352969748269279\\
0.2578125	0.433686978081719\\
0.28125	0.503378470503722\\
0.3046875	0.512504818842485\\
0.328125	0.437830465870448\\
0.3515625	0.314833332703774\\
0.375	0.214149803202236\\
0.3984375	0.167948098740142\\
0.421875	0.157925601666357\\
0.4453125	0.160870834335953\\
0.46875	0.169036833994847\\
0.4921875	0.179575048508063\\
0.515625	0.185520689214959\\
0.5390625	0.181986246368869\\
0.5625	0.174143911322215\\
0.5859375	0.173795383850193\\
0.609375	0.182035791243712\\
0.6328125	0.185698905187831\\
0.65625	0.181153319292237\\
0.6796875	0.178519228345082\\
0.703125	0.183573709121986\\
0.7265625	0.193788279039367\\
0.75	0.196862920121318\\
0.7734375	0.18847229574804\\
0.796875	0.179914126344239\\
0.8203125	0.179064874473804\\
0.84375	0.177614635509075\\
0.8671875	0.167040908753229\\
0.890625	0.15010393327777\\
0.9140625	0.138127855660026\\
0.9375	0.135908290296077\\
0.9609375	0.132496992260272\\
0.984375	0.118674359625816\\
1.0078125	0.101637206758626\\
1.03125	0.0924408800016226\\
1.0546875	0.0914163701582283\\
1.078125	0.0879406884582042\\
1.1015625	0.0771285147479715\\
1.125	0.0673929797031509\\
1.1484375	0.0702755032131181\\
1.171875	0.0839446627358569\\
1.1953125	0.095381883154383\\
1.21875	0.100999335846943\\
1.2421875	0.109442318875429\\
1.265625	0.125937963420733\\
1.2890625	0.143318054263798\\
1.3125	0.151385326692317\\
1.3359375	0.151272179838881\\
1.359375	0.155964526347924\\
1.3828125	0.170323304505573\\
1.40625	0.183484434899826\\
1.4296875	0.186214752707998\\
1.453125	0.184526305364861\\
1.4765625	0.188036272079439\\
1.5	0.198540349148153\\
1.5234375	0.205322803229321\\
1.546875	0.202390553350616\\
1.5703125	0.196612109679766\\
1.59375	0.198081283514913\\
1.6171875	0.205769417316815\\
1.640625	0.209930375440289\\
1.6640625	0.205956071739833\\
1.6875	0.199749408894648\\
1.7109375	0.199339228032998\\
1.734375	0.203378989907328\\
1.7578125	0.202504095733702\\
1.78125	0.196058676545729\\
1.8046875	0.192932351961957\\
1.828125	0.19867061963687\\
1.8515625	0.206669521897156\\
1.875	0.207266951046439\\
1.8984375	0.20032167237875\\
1.921875	0.197961073100322\\
1.9453125	0.206099063009207\\
1.96875	0.215094258934523\\
1.9921875	0.216143903041982\\
2.015625	0.212558125606259\\
2.0390625	0.214374940014295\\
2.0625	0.222241596657854\\
2.0859375	0.224833062055885\\
2.109375	0.215324459206266\\
2.1328125	0.201267895255215\\
2.15625	0.194353454784183\\
2.1796875	0.195494681859143\\
2.203125	0.194554978022387\\
2.2265625	0.188259361651198\\
2.25	0.181111290960236\\
2.2734375	0.178620127471531\\
2.296875	0.177440167516876\\
2.3203125	0.17116174040625\\
2.34375	0.161471546822831\\
2.3671875	0.156785073965086\\
2.390625	0.157661319941333\\
2.4140625	0.155737241344287\\
2.4375	0.145700541218797\\
2.4609375	0.135607303514613\\
2.484375	0.139157699901233\\
2.5078125	0.157544723263598\\
2.53125	0.1767008804156\\
2.5546875	0.185312264114987\\
2.578125	0.185060162808382\\
2.6015625	0.186095144400315\\
2.625	0.191244684815823\\
2.6484375	0.19461905004786\\
2.671875	0.192272035441379\\
2.6953125	0.18882714778583\\
2.71875	0.192584949609827\\
2.7421875	0.200466218704255\\
2.765625	0.201507233185762\\
2.7890625	0.19223749529677\\
2.8125	0.18267968709235\\
2.8359375	0.184354054658922\\
2.859375	0.194231889294755\\
2.8828125	0.198931277702622\\
2.90625	0.193311205551893\\
2.9296875	0.185292463644856\\
2.953125	0.184917302126355\\
2.9765625	0.188791781038216\\
3	0.188718811301007\\
};
\addlegendentry{$B_1y_{d}$};

\addplot [color=mycolor2,solid,line width=1.0pt]
  table[row sep=crcr]{%
0	0\\
0.0234375	-0.0127093287466353\\
0.046875	-0.053835991238094\\
0.0703125	-0.107672114840398\\
0.09375	-0.131853949621265\\
0.1171875	-0.0904642511710364\\
0.140625	0.0100569806629279\\
0.1640625	0.126208111969495\\
0.1875	0.219154923714498\\
0.2109375	0.286639607702381\\
0.234375	0.352908508798019\\
0.2578125	0.431095939228179\\
0.28125	0.497378121305717\\
0.3046875	0.504574880628742\\
0.328125	0.42965807261719\\
0.3515625	0.309014237562856\\
0.375	0.209462342555179\\
0.3984375	0.163360266158729\\
0.421875	0.154529670588294\\
0.4453125	0.158905626294963\\
0.46875	0.168285993285294\\
0.4921875	0.180039919017887\\
0.515625	0.18751225534088\\
0.5390625	0.184806538562263\\
0.5625	0.177327881772237\\
0.5859375	0.177939131911115\\
0.609375	0.186555222642225\\
0.6328125	0.190257249085382\\
0.65625	0.185434935138224\\
0.6796875	0.181094958782008\\
0.703125	0.184974762032815\\
0.7265625	0.194122528171229\\
0.75	0.196061180567244\\
0.7734375	0.186336927878722\\
0.796875	0.176850063337623\\
0.8203125	0.175004810300811\\
0.84375	0.173087412890425\\
0.8671875	0.162751585078858\\
0.890625	0.146222482717591\\
0.9140625	0.134979563882307\\
0.9375	0.133934977176969\\
0.9609375	0.131544053675938\\
0.984375	0.118707389440032\\
1.0078125	0.102743224567359\\
1.03125	0.0945926956577547\\
1.0546875	0.0943124565775482\\
1.078125	0.0916071867764756\\
1.1015625	0.0807379418023346\\
1.125	0.0707966249796025\\
1.1484375	0.0735453817620109\\
1.171875	0.0864060734329328\\
1.1953125	0.0968969489721752\\
1.21875	0.101585663957135\\
1.2421875	0.109090085536648\\
1.265625	0.124429535494577\\
1.2890625	0.140925699418292\\
1.3125	0.148399571744242\\
1.3359375	0.148043936032772\\
1.359375	0.152808849045985\\
1.3828125	0.166776897502198\\
1.40625	0.179457098377273\\
1.4296875	0.183432543747814\\
1.453125	0.182975252814619\\
1.4765625	0.187674920092541\\
1.5	0.199451142051201\\
1.5234375	0.207729487643314\\
1.546875	0.205426778315106\\
1.5703125	0.200159627157059\\
1.59375	0.202110248778708\\
1.6171875	0.210108991134551\\
1.640625	0.214449481004982\\
1.6640625	0.209783893113506\\
1.6875	0.202014343345463\\
1.7109375	0.2004019437686\\
1.734375	0.203339549632029\\
1.7578125	0.201086006856777\\
1.78125	0.193188685993517\\
1.8046875	0.189119600110772\\
1.828125	0.193863140071962\\
1.8515625	0.201819429040284\\
1.875	0.20271970397424\\
1.8984375	0.196274313509809\\
1.921875	0.195024745488851\\
1.9453125	0.204315276059056\\
1.96875	0.214360628973128\\
1.9921875	0.216612927420113\\
2.015625	0.214561512770075\\
2.0390625	0.217442043398224\\
2.0625	0.226102187235773\\
2.0859375	0.229298490006299\\
2.109375	0.219643486556172\\
2.1328125	0.205301933798589\\
2.15625	0.198147604842099\\
2.1796875	0.197892081632301\\
2.203125	0.19592614727385\\
2.2265625	0.188589146009158\\
2.25	0.180279385314581\\
2.2734375	0.176389339565532\\
2.296875	0.174297198192002\\
2.3203125	0.167084272877654\\
2.34375	0.157165788413824\\
2.3671875	0.152756980553863\\
2.390625	0.153782309933906\\
2.4140625	0.152313539232083\\
2.4375	0.143475801614615\\
2.4609375	0.134531799316653\\
2.484375	0.139195870008161\\
2.5078125	0.158824496743192\\
2.53125	0.179264631703503\\
2.5546875	0.188578316820085\\
2.578125	0.188922009145499\\
2.6015625	0.189885212476212\\
2.625	0.195044059002417\\
2.6484375	0.198534266828105\\
2.671875	0.195195138385482\\
2.6953125	0.1905222704711\\
2.71875	0.193225585949276\\
2.7421875	0.200074412372887\\
2.765625	0.199819420558024\\
2.7890625	0.189512225769973\\
2.8125	0.179057511746324\\
2.8359375	0.179937059549983\\
2.859375	0.189677014658613\\
2.8828125	0.194412122413946\\
2.90625	0.18910230829366\\
2.9296875	0.182846520444534\\
2.953125	0.183626202369387\\
2.9765625	0.188484165151233\\
3	0.189529540197474\\
};
\addlegendentry{$B_1y_e$};

\end{axis}
\end{tikzpicture}
  \definecolor{mycolor1}{rgb}{0.00000,0.44700,0.74100}%
\definecolor{mycolor2}{rgb}{0.85000,0.32500,0.09800}%
\begin{tikzpicture}

\begin{axis}[%
width=4cm,
height=4cm,
scale only axis,
xmin=0,
xmax=3,
ymin=-0.2,
ymax=0.4,
xlabel = $t$,
legend style={legend cell align=left,align=left,draw=none},
]
\addplot [color=mycolor1,solid,line width=1.0pt]
  table[row sep=crcr]{%
0	0\\
0.0234375	-0.0101815834103986\\
0.046875	-0.0429347995251187\\
0.0703125	-0.0863636877117446\\
0.09375	-0.106798693478095\\
0.1171875	-0.0740910170623699\\
0.140625	0.00692915441663963\\
0.1640625	0.0986272310762174\\
0.1875	0.171882349325649\\
0.2109375	0.222441701711244\\
0.234375	0.270561116687363\\
0.2578125	0.329923539078\\
0.28125	0.384354468899358\\
0.3046875	0.396552933090658\\
0.328125	0.350128992012129\\
0.3515625	0.272085717756738\\
0.375	0.212016532993628\\
0.3984375	0.187164717291593\\
0.421875	0.179823925749074\\
0.4453125	0.175082000792064\\
0.46875	0.173109908285348\\
0.4921875	0.176614881523936\\
0.515625	0.18049066081735\\
0.5390625	0.177725492032543\\
0.5625	0.17034580250318\\
0.5859375	0.168592816201045\\
0.609375	0.174700682556405\\
0.6328125	0.178301153402986\\
0.65625	0.174908249882117\\
0.6796875	0.17101908700582\\
0.703125	0.172858240830827\\
0.7265625	0.180400394511034\\
0.75	0.183644566279859\\
0.7734375	0.179438690912111\\
0.796875	0.176432306684986\\
0.8203125	0.180627206653578\\
0.84375	0.186585044389864\\
0.8671875	0.186234906953795\\
0.890625	0.179018726658775\\
0.9140625	0.174169679766593\\
0.9375	0.176971821280553\\
0.9609375	0.180766569273104\\
0.984375	0.178744657038451\\
1.0078125	0.173034327378343\\
1.03125	0.170198682555859\\
1.0546875	0.170966723252231\\
1.078125	0.165775240671729\\
1.1015625	0.15063624650425\\
1.125	0.135177328474976\\
1.1484375	0.131526452029419\\
1.171875	0.139080630665478\\
1.1953125	0.144552908702452\\
1.21875	0.140932092375293\\
1.2421875	0.136753621085003\\
1.265625	0.142930499826586\\
1.2890625	0.157484458585623\\
1.3125	0.169085971907497\\
1.3359375	0.171795488927892\\
1.359375	0.171481481240675\\
1.3828125	0.172042127122203\\
1.40625	0.168611718520511\\
1.4296875	0.15890850366256\\
1.453125	0.152372678740895\\
1.4765625	0.158608443173595\\
1.5	0.173636947498594\\
1.5234375	0.180803514305872\\
1.546875	0.174928439146645\\
1.5703125	0.166534755620614\\
1.59375	0.166779149546124\\
1.6171875	0.173751463981983\\
1.640625	0.175739380495911\\
1.6640625	0.166975626406432\\
1.6875	0.15604984391602\\
1.7109375	0.154815117427845\\
1.734375	0.165130487595078\\
1.7578125	0.17801327859298\\
1.78125	0.18469070526109\\
1.8046875	0.18599464588112\\
1.828125	0.189348526378687\\
1.8515625	0.19414670931218\\
1.875	0.193367998584213\\
1.8984375	0.18612335941283\\
1.921875	0.182930558671751\\
1.9453125	0.190701589753824\\
1.96875	0.201191402458868\\
1.9921875	0.20284311235523\\
2.015625	0.19669688157545\\
2.0390625	0.194238680607126\\
2.0625	0.198905575314856\\
2.0859375	0.203146920396053\\
2.109375	0.200680196255714\\
2.1328125	0.194862945932867\\
2.15625	0.194082442056189\\
2.1796875	0.200515890162327\\
2.203125	0.205959884053438\\
2.2265625	0.204144024296947\\
2.25	0.197845925152632\\
2.2734375	0.195575954454038\\
2.296875	0.197212364288986\\
2.3203125	0.195328443715898\\
2.34375	0.185833001202262\\
2.3671875	0.174702807785188\\
2.390625	0.16924633438253\\
2.4140625	0.167578704078098\\
2.4375	0.160417441124306\\
2.4609375	0.147047382193501\\
2.484375	0.138271784379911\\
2.5078125	0.140375672137391\\
2.53125	0.148258119272795\\
2.5546875	0.154845428355905\\
2.578125	0.158534986415399\\
2.6015625	0.165725180407972\\
2.625	0.179054810698592\\
2.6484375	0.192279430677565\\
2.671875	0.198417344887619\\
2.6953125	0.198164370240088\\
2.71875	0.198394260070468\\
2.7421875	0.201070190224048\\
2.765625	0.201722676307305\\
2.7890625	0.196001235216864\\
2.8125	0.187620489204941\\
2.8359375	0.184679449813329\\
2.859375	0.188193983374337\\
2.8828125	0.190671301604649\\
2.90625	0.187688457055233\\
2.9296875	0.184795366809877\\
2.953125	0.190082980745678\\
2.9765625	0.20044975004579\\
3	0.208016970093768\\
};
\addlegendentry{$B_5y_{d}$};

\addplot [color=mycolor2,solid,line width=1.0pt]
  table[row sep=crcr]{%
0	0\\
0.0234375	-0.0100272301978231\\
0.046875	-0.0421539011844964\\
0.0703125	-0.0845716954118823\\
0.09375	-0.10457676741022\\
0.1171875	-0.072663587687376\\
0.140625	0.00707355160368754\\
0.1640625	0.100043433763186\\
0.1875	0.173184076399722\\
0.2109375	0.222876789260861\\
0.234375	0.269547755000225\\
0.2578125	0.326414073492266\\
0.28125	0.377836302008936\\
0.3046875	0.388756740249954\\
0.328125	0.342606863268824\\
0.3515625	0.267015088563068\\
0.375	0.208149906385462\\
0.3984375	0.183622479247301\\
0.421875	0.177452621843198\\
0.4453125	0.173984812579266\\
0.46875	0.173087703673513\\
0.4921875	0.177682252781768\\
0.515625	0.182928695180758\\
0.5390625	0.180669929043778\\
0.5625	0.173401357231255\\
0.5859375	0.172335619842685\\
0.609375	0.17856936938259\\
0.6328125	0.181986558708106\\
0.65625	0.178123605628987\\
0.6796875	0.172727799520633\\
0.703125	0.173516339306279\\
0.7265625	0.180096972709791\\
0.75	0.182358822078642\\
0.7734375	0.176998320108519\\
0.796875	0.173335651015277\\
0.8203125	0.176814637467141\\
0.84375	0.18256933706384\\
0.8671875	0.182629233107504\\
0.890625	0.175948270982666\\
0.9140625	0.171881514111047\\
0.9375	0.175750809922923\\
0.9609375	0.180458059797193\\
0.984375	0.17929121871791\\
1.0078125	0.174532387290357\\
1.03125	0.172536195440735\\
1.0546875	0.173813964630384\\
1.078125	0.169150001470978\\
1.1015625	0.15378142041034\\
1.125	0.137984952129151\\
1.1484375	0.134051105662821\\
1.171875	0.140799379564019\\
1.1953125	0.145398779634294\\
1.21875	0.14094943185613\\
1.2421875	0.135944871693676\\
1.265625	0.14108391332824\\
1.2890625	0.154986796049942\\
1.3125	0.166220638264591\\
1.3359375	0.168879606375197\\
1.359375	0.168780184079772\\
1.3828125	0.169174869302675\\
1.40625	0.165587318364551\\
1.4296875	0.157062815464394\\
1.453125	0.151644122873539\\
1.4765625	0.158936468171406\\
1.5	0.175097579804501\\
1.5234375	0.183553954856072\\
1.546875	0.177996960658931\\
1.5703125	0.169866022134144\\
1.59375	0.170352895899896\\
1.6171875	0.1773993788997\\
1.640625	0.17931425878353\\
1.6640625	0.16975767140772\\
1.6875	0.157451299499502\\
1.7109375	0.155159166750669\\
1.734375	0.164477848650013\\
1.7578125	0.176092530046783\\
1.78125	0.181573040475935\\
1.8046875	0.182246199438741\\
1.828125	0.184923570420573\\
1.8515625	0.189920447009106\\
1.875	0.189617032368782\\
1.8984375	0.182998417883239\\
1.921875	0.180880185747815\\
1.9453125	0.189705690095861\\
1.96875	0.201169706820477\\
1.9921875	0.203920021149959\\
2.015625	0.199149387010755\\
2.0390625	0.197440793048551\\
2.0625	0.202610460764858\\
2.0859375	0.207180278541522\\
2.109375	0.204377334841778\\
2.1328125	0.198124452169111\\
2.15625	0.196931726679572\\
2.1796875	0.202106299988089\\
2.203125	0.206603945663816\\
2.2265625	0.203844655489745\\
2.25	0.196511803798102\\
2.2734375	0.193026534397181\\
2.296875	0.194035962968719\\
2.3203125	0.191499532054445\\
2.34375	0.182013735607522\\
2.3671875	0.171316728261733\\
2.390625	0.166177809327186\\
2.4140625	0.165090371311927\\
2.4375	0.159040856588293\\
2.4609375	0.146699192664912\\
2.484375	0.138903405181971\\
2.5078125	0.142109077631497\\
2.53125	0.151043116980202\\
2.5546875	0.158056396213475\\
2.578125	0.162089551698106\\
2.6015625	0.169027760854973\\
2.625	0.182188866092421\\
2.6484375	0.195302346092313\\
2.671875	0.20045848331749\\
2.6953125	0.19911076807214\\
2.71875	0.198413205608535\\
2.7421875	0.200170578809899\\
2.765625	0.199656490740149\\
2.7890625	0.193156003620263\\
2.8125	0.184144403236746\\
2.8359375	0.180689838201864\\
2.859375	0.184294956716789\\
2.8828125	0.18701758015452\\
2.90625	0.184527710628174\\
2.9296875	0.183172753784463\\
2.953125	0.189476529144065\\
2.9765625	0.200728994949132\\
3	0.209317130441471\\
};
\addlegendentry{$B_5y_{e}$};

\end{axis}
\end{tikzpicture}
  \definecolor{mycolor1}{rgb}{0.00000,0.44700,0.74100}%
\definecolor{mycolor2}{rgb}{0.85000,0.32500,0.09800}%
\begin{tikzpicture}

\begin{axis}[%
width=4cm,
height=4cm,
scale only axis,
xmin=0,
xmax=3,
ymin=-0.2,
ymax=0.6,
xlabel = $t$,
legend style={legend cell align=left,align=left,draw=none},
]
\addplot [color=mycolor1,solid,line width=1.0pt]
  table[row sep=crcr]{%
0	0\\
0.0234375	-0.0124751026297318\\
0.046875	-0.0529202084362192\\
0.0703125	-0.106504008071194\\
0.09375	-0.131900721103567\\
0.1171875	-0.0918971153866922\\
0.140625	0.00889058265554464\\
0.1640625	0.124329606902471\\
0.1875	0.216424201906396\\
0.2109375	0.281336837411661\\
0.234375	0.344528807217533\\
0.2578125	0.422475264860174\\
0.28125	0.493714664452819\\
0.3046875	0.506553540636934\\
0.328125	0.430633959334886\\
0.3515625	0.304420354050377\\
0.375	0.20255878694359\\
0.3984375	0.162396457036843\\
0.421875	0.161669343860103\\
0.4453125	0.168506002695181\\
0.46875	0.173327293383182\\
0.4921875	0.180499533325655\\
0.515625	0.190280493995781\\
0.5390625	0.191071741081776\\
0.5625	0.1819363965896\\
0.5859375	0.176742566526283\\
0.609375	0.181729835891484\\
0.6328125	0.186833905032427\\
0.65625	0.183343921162618\\
0.6796875	0.175866182100507\\
0.703125	0.173228504073788\\
0.7265625	0.18039242642238\\
0.75	0.186587868789751\\
0.7734375	0.181677217650836\\
0.796875	0.175146635064575\\
0.8203125	0.177276907940529\\
0.84375	0.186182835039796\\
0.8671875	0.190333847133266\\
0.890625	0.184430924749789\\
0.9140625	0.178445607498415\\
0.9375	0.182334683460181\\
0.9609375	0.190773378302439\\
0.984375	0.191735264680903\\
1.0078125	0.184528476259013\\
1.03125	0.179400440922749\\
1.0546875	0.183588492464865\\
1.078125	0.189236326214602\\
1.1015625	0.186274777671176\\
1.125	0.178152626819883\\
1.1484375	0.175350285416026\\
1.171875	0.182437625652074\\
1.1953125	0.188986119652329\\
1.21875	0.185248552682048\\
1.2421875	0.177418819074679\\
1.265625	0.175076833957297\\
1.2890625	0.1788026128811\\
1.3125	0.178227910704111\\
1.3359375	0.167196475820601\\
1.359375	0.154241790374725\\
1.3828125	0.149839850731003\\
1.40625	0.150514988275578\\
1.4296875	0.144288389252095\\
1.453125	0.129687189531168\\
1.4765625	0.115732897295192\\
1.5	0.110195727402514\\
1.5234375	0.107626169304238\\
1.546875	0.098046069660376\\
1.5703125	0.0846039540378792\\
1.59375	0.0780828459085076\\
1.6171875	0.0844036111975032\\
1.640625	0.0958830604752613\\
1.6640625	0.104123813477131\\
1.6875	0.111477901273804\\
1.7109375	0.124090031624681\\
1.734375	0.143335690185159\\
1.7578125	0.160191703200213\\
1.78125	0.169108336816475\\
1.8046875	0.176343223844326\\
1.828125	0.18978714193746\\
1.8515625	0.208381352444204\\
1.875	0.221898227828109\\
1.8984375	0.225080984375356\\
1.921875	0.224730639432573\\
1.9453125	0.230167802111274\\
1.96875	0.240148906381161\\
1.9921875	0.245177954326297\\
2.015625	0.241835657714896\\
2.0390625	0.236066225844217\\
2.0625	0.234677078512765\\
2.0859375	0.23269110392544\\
2.109375	0.222331990651985\\
2.1328125	0.204871767101179\\
2.15625	0.191154013599873\\
2.1796875	0.189206217131161\\
2.203125	0.191204271570954\\
2.2265625	0.188273777327556\\
2.25	0.180032479163542\\
2.2734375	0.172771023964954\\
2.296875	0.170943326941336\\
2.3203125	0.169937602566272\\
2.34375	0.167468691987378\\
2.3671875	0.167499574443061\\
2.390625	0.174209447752466\\
2.4140625	0.183116455688293\\
2.4375	0.184763963432022\\
2.4609375	0.179477002941186\\
2.484375	0.17777166761386\\
2.5078125	0.187514934448752\\
2.53125	0.202507195871242\\
2.5546875	0.209518621510553\\
2.578125	0.205689274239457\\
2.6015625	0.201094031606826\\
2.625	0.204782850032016\\
2.6484375	0.213020711424944\\
2.671875	0.21665785449466\\
2.6953125	0.211446034133752\\
2.71875	0.202407069719959\\
2.7421875	0.194381410438454\\
2.765625	0.185093055759554\\
2.7890625	0.173360573356423\\
2.8125	0.16336925995595\\
2.8359375	0.163010850531413\\
2.859375	0.170931256091218\\
2.8828125	0.175837387428906\\
2.90625	0.170511112795107\\
2.9296875	0.159475485923089\\
2.953125	0.156483547157623\\
2.9765625	0.163412860594974\\
3	0.170298303495956\\
};
\addlegendentry{$B_{10}y_{d}$};

\addplot [color=mycolor2,solid,line width=1.0pt]
  table[row sep=crcr]{%
0	0\\
0.0234375	-0.0127151833297519\\
0.046875	-0.0536077620673733\\
0.0703125	-0.107284608504415\\
0.09375	-0.132050192365961\\
0.1171875	-0.0914693713802763\\
0.140625	0.00899730932658491\\
0.1640625	0.126309780318186\\
0.1875	0.220088315370696\\
0.2109375	0.286339978933994\\
0.234375	0.350972638504467\\
0.2578125	0.430763211830244\\
0.28125	0.502102612761807\\
0.3046875	0.512145273811173\\
0.328125	0.432955576639275\\
0.3515625	0.304156587127637\\
0.375	0.200882922239373\\
0.3984375	0.159129739861345\\
0.421875	0.15756797476308\\
0.4453125	0.164285382772434\\
0.46875	0.169048827787768\\
0.4921875	0.17607315638973\\
0.515625	0.185652462119637\\
0.5390625	0.187929953214199\\
0.5625	0.180192974673292\\
0.5859375	0.176043753348168\\
0.609375	0.182401027091562\\
0.6328125	0.188965332602783\\
0.65625	0.187000020035969\\
0.6796875	0.179580422338644\\
0.703125	0.177070986690552\\
0.7265625	0.184400979300989\\
0.75	0.190494211697032\\
0.7734375	0.185472965864036\\
0.796875	0.177873131894425\\
0.8203125	0.178937681348341\\
0.84375	0.186452975820852\\
0.8671875	0.189253369595211\\
0.890625	0.18216149333227\\
0.9140625	0.17524623155911\\
0.9375	0.178897769344569\\
0.9609375	0.187225778353135\\
0.984375	0.188259752141761\\
1.0078125	0.180990660940571\\
1.03125	0.176392396240931\\
1.0546875	0.181546340045782\\
1.078125	0.188194745654487\\
1.1015625	0.186438385146896\\
1.125	0.179369500819255\\
1.1484375	0.177678567503027\\
1.171875	0.1854103461037\\
1.1953125	0.192239982267112\\
1.21875	0.188589587092093\\
1.2421875	0.1807726939579\\
1.265625	0.178582141882679\\
1.2890625	0.18146768121911\\
1.3125	0.179862797300864\\
1.3359375	0.167740895819285\\
1.359375	0.153773133446766\\
1.3828125	0.148181592856525\\
1.40625	0.147076020126086\\
1.4296875	0.140276404021548\\
1.453125	0.125433322731474\\
1.4765625	0.111399307738207\\
1.5	0.105758040152654\\
1.5234375	0.103348139183336\\
1.546875	0.0953443439704924\\
1.5703125	0.0831528383808342\\
1.59375	0.077842436298711\\
1.6171875	0.0854967467104077\\
1.640625	0.0985253201121137\\
1.6640625	0.108013745055124\\
1.6875	0.11542273147311\\
1.7109375	0.128046309974329\\
1.734375	0.147485733853336\\
1.7578125	0.164727739159201\\
1.78125	0.173120325386615\\
1.8046875	0.179031755638239\\
1.828125	0.191152852984703\\
1.8515625	0.208161508286844\\
1.875	0.220272492394056\\
1.8984375	0.222199105732674\\
1.921875	0.221184347400259\\
1.9453125	0.226336814967566\\
1.96875	0.235968511645425\\
1.9921875	0.24071202532914\\
2.015625	0.237180155929783\\
2.0390625	0.232649491853746\\
2.0625	0.23256316497962\\
2.0859375	0.231938041757617\\
2.109375	0.222973419383728\\
2.1328125	0.206758035703212\\
2.15625	0.194393862351149\\
2.1796875	0.192663301994041\\
2.203125	0.194964796225419\\
2.2265625	0.192228785467899\\
2.25	0.184085801345191\\
2.2734375	0.176736387184046\\
2.296875	0.173740037022717\\
2.3203125	0.171605494834516\\
2.34375	0.167725617930198\\
2.3671875	0.166484900619769\\
2.390625	0.171941443292989\\
2.4140625	0.179637200608236\\
2.4375	0.180889140488244\\
2.4609375	0.175473115701353\\
2.484375	0.173755275502882\\
2.5078125	0.183421327470192\\
2.53125	0.198923302239321\\
2.5546875	0.207215590292951\\
2.578125	0.204592199122622\\
2.6015625	0.20126582714705\\
2.625	0.206141205349927\\
2.6484375	0.21580849970511\\
2.671875	0.220188174694562\\
2.6953125	0.215086599624898\\
2.71875	0.206057565094848\\
2.7421875	0.198112089136862\\
2.765625	0.18901522060282\\
2.7890625	0.176396506567609\\
2.8125	0.165352626316316\\
2.8359375	0.163755744850376\\
2.859375	0.170254801134021\\
2.8828125	0.173724283987506\\
2.90625	0.166917108784146\\
2.9296875	0.155948400741896\\
2.953125	0.152942618521515\\
2.9765625	0.159723712496379\\
3	0.166348161509885\\
};
\addlegendentry{$B_{10}y_{e}$};

\end{axis}
\end{tikzpicture}%
  \caption{reflection example: exact and noisy observations on $O_i$, $i=1,5,10$}\label{fig:reflection-obs}
\end{figure}
\Cref{fig:reflection-obs} shows the exact and noisy observations on $O_1$, $O_5$ and $O_{10}$. At the onset, we note two high spikes (a negative and a positive one) which are caused by the source wave initiated on boundary points $\Gamma_s$. The remaining oscillations are caused by the reflection waves originating from the discontinuities of $u_e$ and from the reflecting boundary; only these carry information about the coefficient, which makes the reconstruction challenging.
The results are shown in \cref{fig:reflection-recon:recon}, where each color map is scaled individually to show more details.
We observe that the positions of the discontinuities in $u_e$ that are close to the observation patches are well approximated in $\bar u$ and that the corresponding interfaces are quite sharp. However, the approximation quality of the discontinuities becomes worse farther away from the observation region. This is caused by the fact that reflected waves from lower sections of the discontinuities are more dispersed than the reflected waves from the upper sections of the discontinuities.

\section{Conclusion}

We showed existence of solutions to an optimal control problem for the wave equation with the control entering into the principal part of the operator using total variation regularization and a reformulation of pointwise constraints using a cutoff function. Preferential attainment of a discrete set of control values is incorporated through a multi-bang penalty. We also derived an improved regularity result for solutions of the wave equation under additional natural assumptions on the data and the control, which (for smooth cutoff functions) allows obtaining necessary optimality conditions that can be interpreted in a suitable pointwise fashion. Finally, we demonstrated that the optimal control problem can be solved numerically using a combination of a stabilized finite element discretization and a nonlinear primal-dual proximal splitting algorithm.

This work can be extended in several directions. Besides applying the proposed approach to more realistic models of acoustic tomography or seismic imaging for practical applications, it would be worthwhile to consider the case of boundary observations of the state \cite{Feng:2003}, which however may lead to an unbounded observation operator $B$.
A further challenging goal would be deriving sufficient second-order conditions. Such conditions could then be used for obtaining discretization error estimates for the optimal controls or for showing convergence of the nonlinear primal-dual proximal splitting algorithm based on the ``three-point condition'' on $S$ from \cite{CMV:2019}.

\section*{Acknowledgments}

Support by the German Science Fund (DFG) under grant CL 487/1-1 for C.C. and by the ERC advanced grant 668998 (OCLOC) under the EU's H2020 research program for K.K. and P.T. are gratefully acknowledged.

\bibliographystyle{jnsao}
\bibliography{tvwavecontrol}

\end{document}